\documentclass{amsart}
%%%%%%%%%%%%%%%%%%%%%%%%%%%%%%%%%%%%%%%%%%%%%%%%%%%%%%%%%%%%%%%%%%%%%%%%%%%%%%%%%%%%%%%%%%%%%%%%%%%%%%%%%%%%%%%%%%%%%%%%%%%%%%%%%%%%%%%%%%%%%%%%%%%%%%%%%%%%%%%%%%%%%%%%%%%%%%%%%%%%%%%%%%%%%%%%%%%%%%%%%%%%%%%%%%%%%%%%%%%%%%%%%%%%%%%%%%%%%%%%%%%%%%%%%%%%
\usepackage{eurosym}
\usepackage{amsmath}
\usepackage{amssymb}
\usepackage{amsfonts}
\usepackage{graphicx}

\setcounter{MaxMatrixCols}{10}
%TCIDATA{OutputFilter=LATEX.DLL}
%TCIDATA{Version=5.00.0.2606}
%TCIDATA{<META NAME="SaveForMode" CONTENT="1">}
%TCIDATA{BibliographyScheme=Manual}
%TCIDATA{LastRevised=Wednesday, December 19, 2012 20:12:58}
%TCIDATA{<META NAME="GraphicsSave" CONTENT="32">}
%TCIDATA{Language=American English}

\newtheorem{theorem}{Theorem}[section]
\theoremstyle{plain}

\newtheorem{lemma}[theorem]{Lemma}

\newtheorem{proposition}[theorem]{Proposition}

\numberwithin{equation}{section}
\input{tcilatex}

\begin{document}
\title[Nodal solutions to a semilinear elliptic problem on a manifold]{%
Multiplicity and asymptotic profile of $2$-nodal solutions to a semilinear
elliptic problem on a Riemannian manifold}
\author{M\'{o}nica Clapp}
\address{Instituto de Matem\'{a}ticas, Universidad Nacional Aut\'{o}noma de M%
\'{e}xico, Circuito Exterior CU, 04510 M\'{e}xico DF, Mexico}
\email{monica.clapp@im.unam.mx}
\author{Anna Maria Micheletti}
\address{Dipartimento di Matematica Applicata, Universit\`{a} di Pisa, Via
Buonarroti 1/c 56127, Pisa, Italy}
\email{a.micheletti@dma.unipi.it}
\thanks{Research supported by CONACYT grant 129847 and UNAM-DGAPA-PAPIIT
grant IN106612 (Mexico).}
\date{December, 2012}

\begin{abstract}
We establish a lower bound for the number of sign changing solutions with
precisely two nodal domains to the singularly perturbed nonlinear elliptic
equation $-\varepsilon ^{2}\Delta _{g}u+u=|u|^{p-2}u$ on an $n$-dimensional
Riemannian manifold $M,$ $p\in (2,2^{\ast }),$ in terms of the cup-length of
the configuration space of $M.$ We give a precise description of the
asymptotic profile of these solutions as $\varepsilon \rightarrow 0.$ We
also provide new estimates for the cup-length of the configuration space of $%
M.\medskip $

\noindent \textsc{2010 Mathematics Subject Classification.} Primary 58J05;
Secondary 35B25, 35J61, 35R01, 55R12, 55R80.\medskip

\noindent \textsc{Key words and phrases.} Nonlinear elliptic equation,
Riemannian manifold, singularly perturbed problem, sign changing solutions,
transfer, cup-length of configuration spaces.
\end{abstract}

\maketitle

\section{Introduction}

Let $(M,g)$ be a compact connected Riemannian manifold, without boundary, of
class $\mathcal{C}^{k}$ with $k\geq 1$. Let $n\geq 2$ be the dimension of $M$%
. We consider the following problem 
\begin{equation}
\left\{ 
\begin{array}{l}
-\varepsilon ^{2}\Delta _{g}u+u=|u|^{p-2}u, \\ 
u\in H_{g}^{1}(M),%
\end{array}%
\right.  \label{eq:Peps}
\end{equation}%
with $2<p<2^{\ast },$ where $2^{\ast }:=\frac{2n}{n-2}$ if $n>2$ and $%
2^{\ast }=\infty $ if $n=2$. The space $H_{g}^{1}(M)$ is the completion of $%
\mathcal{C}^{\infty }(M)$ with respect to the norm defined by $\Vert u\Vert
_{g}^{2}=\int_{M}(\left\vert \nabla _{g}u\right\vert ^{2}+u^{2})d\mu _{g}$.

This problem resembles the Neumann problem on a flat domain, which has been
widely studied in literature, see e.g. \cite%
{DY,DFW,G,GWW,Li,NT1,NT2,W1,WW05,WW}.

Existence, multiplicity and shape of positive solutions to (\ref{eq:Peps})
have been investigated in several papers. The existence of a mountain pass
solution was proved by Byeon and Park in \cite{BP05}, who also showed that
this solution has a spike which approaches a maximum point of the scalar
curvature as $\varepsilon \rightarrow 0$. Benci, Bonanno and Micheletti \cite%
{BBM07} showed that problem (\ref{eq:Peps}) has at least cat$(M)+1$ positive
solutions if $\varepsilon $ is small enough, where cat$(M)$ denotes the
Lusternik-Schnirelmann category of $M$. A similar result for more general
nonlinearities was obtained in \cite{V08}.\ In \cite{H09} N. Hirano gave a
lower bound for the number of positive solutions to (\ref{eq:Peps}) in terms
of the category of a set determined by the geometry of $M$. Solutions with
one peak which concentrates at a stable critical point of the scalar
curvature of $M$ as $\varepsilon \rightarrow 0$ were obtained by Micheletti
and Pistoia in \cite{MP09}, and in \cite{DMP09} Dancer, Micheletti and
Pistoia proved the existence solutions with $k$ peaks which concentrate at
an isolated minimum of the scalar curvature of $M$ as $\varepsilon
\rightarrow 0.$

Concerning sign changing solutions only few results are known. When the
scalar curvature of $M$ is not constant a solution with one positive peak
and one negative peak, concentrating at a maximum and a minimum of the
scalar curvature, was obtained by Micheletti and Pistoia in \cite{MP09b}.
Multiplicity of solutions which change sign exactly once was established by
Ghimenti and Micheletti in \cite{GM10} for Riemannian manifolds $M$ which
are invariant with respect to an orthogonal involution.

Here we provide a lower bound for the number of sign changing solutions with
precisely two nodal domains, without requiring any symmetry or geometric
assumptions on the manifold $M$, and we give a precise description of the
asymptotic profile of these solutions as $\varepsilon \rightarrow 0$.

In order to state our main result we introduce some notation. The limiting
problem to problem (\ref{eq:Peps}) as $\varepsilon \rightarrow 0$ is 
\begin{equation}
-\Delta u+u=|u|^{p-2}u,\text{\qquad }u\in H^{1}(\mathbb{R}^{n}).
\label{eq:a1}
\end{equation}%
It is well known that, up to translations, this problem has a unique
positive solution, which is spherically symmetric, and is usually denoted by 
$U\in H^{1}(\mathbb{R}^{n})$.

The exponential map $\exp :TM\rightarrow M,$ defined on the total space $TM$
of the tangent bundle of $M$, is a $\mathcal{C}^{\infty }$ map. Since $M$ is
compact, there exists $R>0$ such that $\exp _{q}:B(0,R)\rightarrow
B_{g}(q,R) $ is a diffeomorphism for every $q\in M$. Here $T_{q}M$ is
identified with $\mathbb{R}^{n}$, $B(0,R)$ is the ball of radius $R$ in $%
\mathbb{R}^{n}$ centered at $0,$ and $B_{g}(q,R)$ denotes the ball of radius 
$R$ in $M$ centered at $q$ with respect to the distance induced by the
Riemannian metric $g$.

Let $\chi _{R}\in \mathcal{C}^{\infty }(\mathbb{R}^{n})$ be a radial cut-off
function such that $\chi _{R}(z)=1$ if $\left\vert z\right\vert \leq R/2$, $%
\chi _{R}(z)=0$ if $\left\vert z\right\vert \geq R$, and $\left\vert \nabla
\chi _{R}(z)\right\vert <2/R$ and $\left\vert \nabla ^{2}\chi
_{R}(z)\right\vert <2/R^{2}$ for all $z\in \mathbb{R}^{n}.$ For $\xi \in M$
and $\varepsilon >0$ we define $W_{\varepsilon ,\xi }\in H_{g}^{1}(M)$ by 
\begin{equation}
W_{\varepsilon ,\xi }(x)=\left\{ 
\begin{array}{ll}
U\left( \frac{\exp _{\xi }^{-1}(x)}{\varepsilon }\right) \chi _{R}\left( 
\frac{\exp _{\xi }^{-1}(x)}{\varepsilon }\right) & \text{if }x\in B_{g}(\xi
,\varepsilon R), \\ 
0 & \text{otherwise}.%
\end{array}%
\right.  \label{eq:Weps}
\end{equation}

Let $F(M):=\{(x,y)\in M\times M:x\neq y\}$. The quotient space $C(M)$ of $%
F(M)$ obtained by identifying $(x,y)$ with $(y,x)$ is called the
configuration space of $M.$

We write $\mathcal{H}^{\ast }$ for singular cohomology with coefficients in $%
\mathbb{Z}/2$. Recall that the cup-length of a topological space $X$ is the
smallest integer $k\geq 1$ such that the cup-product of any $k$ cohomology
classes in $\widetilde{\mathcal{H}}^{\ast }(X)$ is zero, where $\widetilde{%
\mathcal{H}}^{\ast }$ is reduced cohomology. We denote it by cupl$X.$

We are ready to state our main result.

\begin{theorem}
\label{thm:A}There exists $\varepsilon _{0}>0$ such that for every $%
\varepsilon \in (0,\varepsilon _{0})$ problem \emph{(\ref{eq:Peps})} has at
least \emph{cupl\thinspace }$C(M)$ pairs of sign changing solutions $\pm
u_{\varepsilon }$ with the following properties:

\begin{enumerate}
\item[\emph{(a)}] $u_{\varepsilon }$ has a unique local maximum point $%
Q_{\varepsilon }$ and a unique local minimum point $q_{\varepsilon }$ on $M$.

\item[\emph{(b)}] For any fixed $T>0,$ 
\begin{align*}
\lim_{\varepsilon \rightarrow 0}\Vert u_{\varepsilon }(\exp _{Q_{\varepsilon
}}(\varepsilon z))-U(z)\Vert _{\mathcal{C}^{2}(B(0,T))}& =0, \\
\lim_{\varepsilon \rightarrow 0}\Vert u_{\varepsilon }(\exp _{q_{\varepsilon
}}(\varepsilon z))+U(z)\Vert _{\mathcal{C}^{2}(B(0,T))}& =0.
\end{align*}

\item[\emph{(c)}] Moreover, 
\begin{align*}
{\sup_{\xi \in M\smallsetminus B_{g}(Q_{\varepsilon },\varepsilon
T)}u_{\varepsilon }(\xi )}& <ce^{-\mu T}+\sigma _{1}(\varepsilon ), \\
\inf_{\xi \in M\smallsetminus B_{g}(q_{\varepsilon },\varepsilon
T)}u_{\varepsilon }(\xi )& >{-ce^{-\mu T}+\sigma }_{2}{(\varepsilon ),}
\end{align*}%
for some positive constants $c,\mu $ and for some functions $\sigma
_{1},\sigma _{2}$ which go to zero as $\varepsilon $ goes to zero.

\item[\emph{(d)}] The function $\Phi _{\varepsilon }$ given by%
\begin{equation*}
u_{\varepsilon }=W_{\varepsilon ,Q_{\varepsilon }}-W_{\varepsilon
,q_{\varepsilon }}+\Phi _{\varepsilon }
\end{equation*}%
is such that $\Vert \Phi _{\varepsilon }\Vert _{\varepsilon }\rightarrow 0$
as $\varepsilon \rightarrow 0$, with $\Vert \cdot \Vert _{\varepsilon }$ as
in \emph{(\ref{normepsilon}) }below.
\end{enumerate}
\end{theorem}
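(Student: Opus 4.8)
The strategy is the standard Lyapunov–Schmidt / penalization approach combined with Lusternik–Schnirelmann theory, adapted to produce \emph{nodal} solutions whose two nodal domains carry, respectively, a single positive and a single negative spike. First I would introduce the rescaled norm $\Vert\cdot\Vert_\varepsilon$ on $H^1_g(M)$ (the one labelled \eqref{normepsilon} in the paper) and the associated energy functional $J_\varepsilon$ whose critical points are the solutions of \eqref{eq:Peps}. The bubbles $W_{\varepsilon,\xi}$ defined in \eqref{eq:Weps} are approximate critical points; for a pair of distinct points $(\xi_1,\xi_2)\in F(M)$ one forms the two-bump ansatz $W_{\varepsilon,\xi_1}-W_{\varepsilon,\xi_2}$, and the key first step is to show that for $\varepsilon$ small the functional $J_\varepsilon$, restricted to a suitable neighbourhood of the manifold of such ansatz functions inside a sign-changing ``double Nehari'' set $\mathcal{N}_\varepsilon$, has its critical points well approximated by this family. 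Concretely, one solves the auxiliary (Schmidt) equation to obtain, for each $(\xi_1,\xi_2)$, a correction $\Phi_{\varepsilon,\xi_1,\xi_2}$ orthogonal to the tangent space of the bubble manifold, with $\Vert\Phi_{\varepsilon,\xi_1,\xi_2}\Vert_\varepsilon\to 0$ uniformly; this already yields property (d) and, after elliptic regularity and the exponential decay of $U$, properties (b) and (c).

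The second step is the variational reduction: define the reduced functional $\widetilde J_\varepsilon(\xi_1,\xi_2):=J_\varepsilon(W_{\varepsilon,\xi_1}-W_{\varepsilon,\xi_2}+\Phi_{\varepsilon,\xi_1,\xi_2})$ on $F(M)$, and show it descends to $C(M)$ (it is symmetric under swapping the two points, since swapping the spikes and changing the overall sign leaves $J_\varepsilon$ invariant) and that it has a $\mathcal{C}^1$ expansion of the form $\widetilde J_\varepsilon(\xi_1,\xi_2)=2c_\infty + \varepsilon^n\bigl(\text{scalar-curvature terms}\bigr)+o(\varepsilon^n)$, where $c_\infty$ is the ground-state level of the limit problem \eqref{eq:a1}; the crucial point is that the interaction between the two spikes is of lower order (exponentially small in the distance divided by $\varepsilon$) so it does not destroy the topology. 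Then one invokes the Lusternik–Schnirelmann estimate: a $\mathcal{C}^1$ functional on a space $X$ which is a small perturbation of a constant has at least $\mathrm{cupl}\,X + 1$ critical points if $X$ is a closed manifold, but since $C(M)$ is an open manifold one works on a compact sublevel set $\widetilde J_\varepsilon^{\,d}$ that contains a copy of $C(M)$ carrying the full cup-length, obtaining at least $\mathrm{cupl}\,C(M)$ critical points, hence that many pairs $\pm u_\varepsilon$ of solutions of \eqref{eq:Peps}.

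The third step is the qualitative analysis needed for (a): one must rule out extra local maxima or minima. Here I would argue that, because $u_\varepsilon=W_{\varepsilon,Q_\varepsilon}-W_{\varepsilon,q_\varepsilon}+\Phi_\varepsilon$ with $\Phi_\varepsilon$ small in $\mathcal{C}^2$ after rescaling (via $L^\infty$ and Schauder estimates applied to the equation satisfied by $\Phi_\varepsilon$), the solution $u_\varepsilon$ is $\mathcal{C}^2$-close, in the rescaled variables near each spike, to $U$ respectively $-U$, which has a unique nondegenerate maximum (resp.\ minimum) at the origin; away from the two balls $B_g(\xi_i,\varepsilon R)$ the function $u_\varepsilon$ is exponentially small together with its derivatives, so it can have no interior critical point there once $\varepsilon$ is small. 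Identifying $Q_\varepsilon,q_\varepsilon$ as the true extrema (close to $\xi_1,\xi_2$) then gives exactly one local maximum and one local minimum, and in particular exactly two nodal domains.

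I expect the \textbf{main obstacle} to be the sign-change bookkeeping in the reduction: one must make sure the minimization/min-max is performed on the correct constraint set so that the resulting solutions genuinely change sign and have precisely two nodal domains, i.e.\ that the reduced functional's critical points on $C(M)$ cannot collapse to single-spike (positive) solutions or develop a third bump. This is handled by choosing the ansatz manifold inside a ``nodal Nehari-type'' manifold on which both the positive and negative parts are forced to stay at ground-state energy scale, and by the energy expansion above, which pins the level near $2c_\infty$ and thereby excludes both lower-energy (one-spike) and higher-energy (three-or-more-spike) configurations; a secondary technical difficulty is proving the $\mathcal{C}^1$-smoothness and the uniform error estimates for the map $(\xi_1,\xi_2)\mapsto\Phi_{\varepsilon,\xi_1,\xi_2}$ up to the ``diagonal'' of $F(M)$, which requires the exponential-decay estimates on $U$ and careful control of the cut-off $\chi_R$.
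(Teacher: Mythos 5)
Your plan is a Lyapunov--Schmidt reduction, which is a genuinely different route from the one taken in the paper, and it has a gap that the paper's authors explicitly designed their method to avoid.

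The paper does \emph{not} use a finite-dimensional reduction. Instead it works with the negative gradient flow $\varphi_\varepsilon$ of $J_\varepsilon$ and a strictly positively invariant set $\mathcal{D}_\varepsilon=\mathcal{B}(\varepsilon,\mathcal{P})\cup\mathcal{B}(\varepsilon,-\mathcal{P})\cup J_\varepsilon^0$, defining $\mathcal{Z}_\varepsilon$ as the complement of the basin of attraction of $\mathcal{D}_\varepsilon$; every function in $\mathcal{Z}_\varepsilon$ changes sign and every nodal solution lies there. Multiplicity then comes from a cohomological argument: the barycenter map $\theta_\varepsilon$ and the two-bump insertion $\iota_\varepsilon$ do not compose to a homotopy equivalence, but Dold's fixed-point transfer supplies a homomorphism $\tau_\varepsilon$ with $\tau_\varepsilon\circ\widehat\theta_\varepsilon^{\,*}=\widehat{i}^{\,*}$, forcing $\mathrm{cupl}\bigl[(\mathcal{Z}_\varepsilon\cap J_\varepsilon^{d_\varepsilon+\delta_0})/(\mathbb{Z}/2)\bigr]\ge\mathrm{cupl}\,C(M)$, and then equivariant Lusternik--Schnirelmann theory applies on that sublevel set (which has the Palais--Smale property, since $M$ is compact). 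The asymptotic profile (a)--(d) is proved separately by a blow-up analysis valid for \emph{any} nodal solution with energy $\le d_\varepsilon+\kappa_0$, combining Weth's lower bound $2c_\infty+\kappa_0$ for the energy of sign-changing entire solutions with an energy-counting argument that rules out three local extrema. The introduction states explicitly that there is ``no natural constraint'' for sign-changing solutions and that the Lyapunov--Schmidt method ``does not provide'' the needed multiplicity here; your proposal goes against both remarks.

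The concrete gap in your argument is at the diagonal. You write that the interaction between the two spikes is exponentially small in $d_g(\xi_1,\xi_2)/\varepsilon$ and ``so it does not destroy the topology,'' and you relegate behavior near the diagonal to a ``secondary technical difficulty.'' But the reduced functional is only defined on $F_\varepsilon(M)=\{d_g(\xi_1,\xi_2)\ge 2\varepsilon R\}$ (otherwise the two bumps overlap), and on its boundary the interaction is $O(1)$, not negligible. For a positive spike next to a negative spike the interaction is \emph{attractive}: it lowers the energy as the spikes approach, so critical points of the constrained reduced functional tend to be pushed onto $\partial F_\varepsilon(M)$ and are then not genuine critical points of $J_\varepsilon$. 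No one-line appeal to Lusternik--Schnirelmann on ``a compact sublevel set that carries the cup-length of $C(M)$'' avoids this, because whether such a sublevel set away from the diagonal still carries the full cup-length is precisely the nontrivial topological claim being smuggled in. (A secondary issue: when the scalar curvature of $M$ is constant the leading $\varepsilon^n$ term in your expansion vanishes, so the ``small $C^1$ perturbation of a constant'' framework gives nothing; the theorem however holds with no curvature assumption.) The paper's flow-invariant set construction circumvents exactly this obstruction, at the price of losing an explicit parametrization of the solution set, which is why the profile statements (a)--(d) are then recovered by a separate blow-up argument rather than read off from an ansatz.
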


We shall see that%
\begin{equation*}
\text{cupl\thinspace }C(M)\geq n+1.
\end{equation*}%
However, this estimate can be improved in many cases. We prove the following
result.

\begin{theorem}
\label{thm:cuplength}If $\mathcal{H}^{i}(M)=0$ for all $0<i<m$ and if there
are $k$ cohomology classes $\zeta _{1},\ldots ,\zeta _{k}\in \mathcal{H}%
^{m}(M)$ whose cup-product is nontrivial, then%
\begin{equation*}
\emph{cupl\,}C(M)\geq k+n.
\end{equation*}
\end{theorem}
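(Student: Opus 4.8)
The plan is to realise $C(M)$ inside the Borel construction of the transposition action and to run the Gysin sequence of the deleted diagonal. The free $\mathbb{Z}/2$-action on $F(M)$ that exchanges the two coordinates identifies $C(M)$ with $F(M)\times_{\mathbb{Z}/2}E\mathbb{Z}/2$, which is the complement $Y\smallsetminus\widetilde{\Delta}$, where $Y:=(M\times M)\times_{\mathbb{Z}/2}E\mathbb{Z}/2$ and $\widetilde{\Delta}\cong M\times B\mathbb{Z}/2$ is the image of the diagonal (on which $\mathbb{Z}/2$ acts trivially). Write $w\in\mathcal{H}^{1}(C(M))$ for the pull-back of the generator of $\mathcal{H}^{1}(B\mathbb{Z}/2)$, i.e.\ the first Stiefel--Whitney class of the double cover $F(M)\to C(M)$, and let $\mathrm{tr}\colon\mathcal{H}^{\ast}(M\times M)\to\mathcal{H}^{\ast}(Y)$ be the transfer of the double cover $\rho\colon M\times M\to Y$. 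From $\mathcal{H}^{i}(M)=0$ for $0<i<m$ one gets $\mathcal{H}^{i}(M\times M)=0$ in the same range, while $\mathcal{H}^{m}(M\times M)$ is an induced $\mathbb{Z}/2[\mathbb{Z}/2]$-module; the Leray--Serre spectral sequence of $Y\to B\mathbb{Z}/2$ then shows that in degrees $\le m$ the algebra $\mathcal{H}^{\ast}(Y)$ consists only of the powers of $w$, except in degree $m$, where the classes $\mathrm{tr}(1\otimes\zeta_{j})$ appear as well (the one candidate differential is killed by comparison with the $\mathbb{Z}/2$-equivariant diagonal $M\hookrightarrow M\times M$). Let $\tau_{j}\in\mathcal{H}^{m}(C(M))$ be the restriction of $\mathrm{tr}(1\otimes\zeta_{j})$.

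Next I would set up the Gysin sequence. The $\mathbb{Z}/2$-equivariant normal bundle of the diagonal is $TM$ with the antipodal action, so over $\widetilde{\Delta}=M\times B\mathbb{Z}/2$ it is $TM\otimes L$, with $L$ the real line bundle satisfying $w_{1}(L)=w$; the Thom isomorphism and excision yield an exact sequence
\[
\cdots\longrightarrow\mathcal{H}^{i-n}(M\times B\mathbb{Z}/2)\xrightarrow{\ \Psi\ }\mathcal{H}^{i}(Y)\xrightarrow{\ j^{\ast}\ }\mathcal{H}^{i}(C(M))\xrightarrow{\ \partial\ }\mathcal{H}^{i-n+1}(M\times B\mathbb{Z}/2)\longrightarrow\cdots,
\]
in which the composite of $\Psi$ with restriction to $\widetilde{\Delta}$ is multiplication by the Euler class $e=w_{n}(TM\otimes L)=\sum_{i=0}^{n}w_{i}(TM)\,w^{\,n-i}$, whose leading term is $w^{\,n}$. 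Two consequences are immediate. First, $w^{\,n}\in\mathcal{H}^{n}(Y)$ restricts trivially to the fibre $M\times M$, whereas $\operatorname{im}\Psi$ in that degree is spanned by the equivariant diagonal class, which does not; hence $w^{\,n}\neq0$ in $\mathcal{H}^{\ast}(C(M))$, giving $\mathrm{cupl}\,C(M)\ge n+1$ (this settles $k=1$). Second, since $w$ annihilates the image of $\mathrm{tr}$, the classes $\tau_{j}$ are $w$-torsion, and the products $\tau_{i_{1}}\cdots\tau_{i_{r}}$ can be computed inside $\mathcal{H}^{\ast}(Y)$ purely from the reciprocity relation $\mathrm{tr}(x)\,y=\mathrm{tr}(x\cdot\rho^{\ast}y)$ and the fact that $\mathrm{tr}$ vanishes on the image of $\rho^{\ast}$ (in particular $\mathrm{tr}(\zeta\otimes\zeta)=0$).

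The heart of the argument, and the step I expect to be the main obstacle, is to exhibit a nonzero product of $k+n-1$ reduced classes in $\mathcal{H}^{\ast}(C(M))$, built from the $\tau_{j}$ and suitable powers of $w$. Using $\zeta_{1}\cdots\zeta_{k}\neq0$ one shows, via the reciprocity formula above, that an appropriate product $\tau_{i_{1}}\cdots\tau_{i_{k}}$ (together with further $w$- or $\tau$-factors where needed) ``ratchets up'' inside $\mathcal{H}^{\ast}(Y)$ to a term divisible by $e$, hence ultimately to a nonzero power of $w$, and that the resulting class is not in $\operatorname{im}\Psi$; the explicit formula for $e$ and the description of $\operatorname{im}\Psi$ as the $w$-linear span of the classes $\Psi(\zeta)$ (which have prescribed restrictions both to the fibre and to $\widetilde{\Delta}$) are precisely what is needed to detect non-vanishing. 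Since the degree of this product is at most $2n-1$, which is the cohomological dimension of the open $2n$-manifold $C(M)$, there is no dimensional obstruction, and $\mathrm{cupl}\,C(M)\ge k+n$ follows. The delicate point is that the two naive invariants --- restriction to the fibre $M\times M$ and restriction to $\widetilde{\Delta}$ --- are individually insufficient when the low-degree Stiefel--Whitney classes of $M$ vanish, so the computation must be carried out in the full extended-power algebra $\mathcal{H}^{\ast}_{\mathbb{Z}/2}(M\times M)$, with the choices of lifts of the $\tau_{j}$ (ambiguous up to $w^{m}$) fixed carefully.
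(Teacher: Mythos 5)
Your proposal takes a genuinely different route from the paper. The paper factors a map $\widehat{\alpha}:\mathbb{P}M\to C(V)$ (the projectivized tangent bundle into the configuration space of a tubular neighborhood $V$) through $C(M)$, and tries to lift $\widehat{\pi}^{\ast}(\zeta)\in\mathcal{H}^{m}(\mathbb{P}M)$ to $\mathcal{H}^{m}(C(V))$ by means of the Leray--Serre spectral sequence of the Borel fibration for $F(V)$ together with a Thom--Gysin argument (Lemma \ref{lem:pullback}); the cup-length bound then comes from the Leray--Hirsch description of $\mathcal{H}^{\ast}(\mathbb{P}M)$. You instead realize $C(M)$ as the complement of $\widetilde{\Delta}\cong M\times B\mathbb{Z}/2$ inside the Borel construction $Y=(M\times M)\times_{\mathbb{Z}/2}E\mathbb{Z}/2$ and argue directly from the Gysin sequence of that pair, producing auxiliary classes $\tau_{j}$ via the transfer of the double cover $\rho\colon M\times M\to Y$. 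The part of your argument proving $w^{\,n}\neq0$ in $\mathcal{H}^{\ast}(C(M))$ (hence $\mathrm{cupl}\,C(M)\geq n+1$) is clean and complete: $w^{n}$ restricts to zero on the fibre $M\times M$ while the diagonal class $\Psi(1)$ does not, so $w^{n}\notin\operatorname{im}\Psi$ and the Gysin sequence gives the conclusion.

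The general-$k$ bound, however, is not established, and you correctly flag the key step as ``the heart of the argument''. This is a genuine gap, not a detail, and the sketch you offer cannot work as stated. You note, correctly, that $w\cdot\tau_{j}=0$ (Frobenius reciprocity and $\rho^{\ast}w=0$), so the $\tau_{j}$ are $w$-torsion. But then the only reduced classes you have produced in $\mathcal{H}^{\ast}(C(M))$ are $w$ and the $\tau_{j}$: any product of $k+n-1$ of them containing both a $w$ and a $\tau_{j}$ is zero; a pure product of $\tau$'s has degree $(k+n-1)m$, which you do not control; and a pure power $w^{k+n-1}$ is not shown to be nonzero (and it need not be --- for $M=S^{n}$, where $m=n$, $k=1$ and $C(S^{n})\simeq\mathbb{R}P^{n}$, one has $w^{k+n-1}=w^{n}\neq0$, but the ``further $w$-factors'' idea fails the moment $km<n$ or $m\geq2$ would force an extra $w$ next to a $\tau$). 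So no candidate nonzero $(k+n-1)$-fold product is actually exhibited, and the ``ratcheting up'' via the Euler class $e=\sum_{i}w_{i}(TM)w^{\,n-i}$ is not made precise --- indeed $e\cdot\tau_{j}=w^{n}\tau_{j}+(\text{lower }w\text{-powers})\cdot\tau_{j}=0$ because every summand contains a positive power of $w$, so multiplying by $e$ annihilates rather than ``ratchets up'' the transfer classes. Finally, the justification that the differential $d_{m+1}$ in the spectral sequence of $Y\to B\mathbb{Z}/2$ vanishes ``by comparison with the $\mathbb{Z}/2$-equivariant diagonal'' is not valid: the diagonal restriction $\mathcal{H}^{m}(M\times M)^{\mathbb{Z}/2}\to\mathcal{H}^{m}(M)$ sends the invariant generator $\zeta\otimes1+1\otimes\zeta$ to $2\zeta=0$ in $\mathbb{Z}/2$-cohomology, so this comparison detects nothing on $E_{2}^{0,m}$.
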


For example, if $M$ is homeomorphic to an $n$-torus $\mathbb{S}^{1}\times
\cdots \times \mathbb{S}^{1}$ (with $n$ factors) then cupl$\,C(M)=2n.$

Unlike the case of positive solutions or the symmetric case considered in 
\cite{GM10} where one looks for solutions on some Nehari manifold, there is
no natural constraint for sign changing solutions to problem (\ref{eq:Peps}%
). Our approach is based on some ideas introduced in \cite{BCW07}. We
exhibit a set $\mathcal{Z}_{\varepsilon }$ of sign changing functions which
is positively invariant under the negative gradient flow of the energy
functional associated to problem (\ref{eq:Peps}). This set does not have an
explicit description, so there is no way of defining a map from $C(M)$ into
it. However, using Dold's fixed point transfer \cite{d}, one can obtain a
homomorphism at the cohomological level. A careful study of the barycenter
map introduced in \cite{BBM07} allows us to establish a lower bound for the
equivariant Lusternik-Schnirelmann of low energy sublevel sets of $\mathcal{Z%
}_{\varepsilon }$ and, hence, for the number of sign changing solutions.

In contrast to the Lyapunov-Schmidt reduction method, applied for example in 
\cite{MP09b}, this method does not provide information on the asymptotic
profile of the solutions obtained, as $\varepsilon \rightarrow 0.$ We carry
out a careful analysis in order to show that they have the properties
described in Theorem \ref{thm:A}.

Finally we wish to point out that, although configuration spaces have been
widely studied, not much seems to be known about the multiplicative
structure of their cohomology. So we believe that Theorem \ref{thm:cuplength}
has an interest of its own. A similar estimate for the cup-length of the
configuration space of an open subset of $\mathbb{R}^{n}$ has been given in 
\cite{BCW07}.

The paper is organized as follows: In section \ref{sec:Sezione-1} we discuss
the variational setting for sign changing solutions to problem (\ref{eq:Peps}%
). In section \ref{sec:Sezione-2} we prove the multiplicity statement in
Theorem \ref{thm:A},\ and in section \ref{sec:Sezione-3} we prove that the
solutions have properties \emph{(a)-(d)}. Section \ref{sec:Sezione-5}\ is
devoted to the proof of Theorem \ref{thm:cuplength}.

\section{The variational setting}

\label{sec:Sezione-1}For $\varepsilon >0$ we take 
\begin{eqnarray}
(u,v)_{\varepsilon } &:&=\frac{1}{\varepsilon ^{n}}\int_{M}\left(
\varepsilon ^{2}\left\langle \nabla _{g}u,\nabla _{g}v\right\rangle
+uv\right) d\mu _{g},  \notag \\
\left\Vert u\right\Vert _{\varepsilon }^{2} &:&=\frac{1}{\varepsilon ^{n}}%
\int_{M}\left( \varepsilon ^{2}|\nabla _{g}u|^{2}+u^{2}\right) d\mu _{g}
\label{normepsilon}
\end{eqnarray}%
as the scalar product and the corresponding norm in $H_{g}^{1}(M)$. For any $%
u\in L_{g}^{p}(M)$ we put 
\begin{equation*}
\left\vert u\right\vert _{p,\varepsilon }:=\left( \frac{1}{\varepsilon ^{n}}%
\int_{M}|u|^{p}d\mu _{g}\right) ^{\frac{1}{p}},\qquad |u|_{p,g}:=\left(
\int_{M}|u|^{p}d\mu _{g}\right) ^{\frac{1}{p}},
\end{equation*}%
and define%
\begin{equation*}
S_{\varepsilon }:=\inf \left\{ \frac{\left\Vert u\right\Vert _{\varepsilon
}^{2}}{\left\vert u\right\vert _{p,\varepsilon }^{2}}:u\in H_{g}^{1}(M),%
\text{ }u\neq 0\right\} .
\end{equation*}

The solutions of (\ref{eq:Peps}) are the critical points of the functional 
\begin{equation*}
J_{\varepsilon }:H_{g}^{1}(M)\rightarrow \mathbb{R},\qquad J_{\varepsilon
}(u)=\frac{1}{2}\left\Vert u\right\Vert _{\varepsilon }^{2}-\frac{1}{p}%
\left\vert u\right\vert _{p,\varepsilon }^{p}.
\end{equation*}%
Any solution $u\neq 0$ of (\ref{eq:Peps}) lies on the Nehari manifold 
\begin{align*}
\mathcal{N}_{\varepsilon }:& =\left\{ u\in H_{g}^{1}(M)\smallsetminus
\left\{ 0\right\} :J_{\varepsilon }^{\prime }(u)u=0\right\} \\
& =\{u\in H_{g}^{1}(M)\smallsetminus \left\{ 0\right\} :\left\Vert
u\right\Vert _{\varepsilon }^{2}=\left\vert u\right\vert _{p,\varepsilon
}^{p}\},
\end{align*}%
which is a $\mathcal{C}^{2}$-manifold diffeomorphic to the unit sphere in $%
H_{g}^{1}(M)$. Any sign changing solution of (\ref{eq:Peps}) belongs to the
set 
\begin{equation*}
\mathcal{E}_{\varepsilon }:=\left\{ u\in H_{g}^{1}(M):u^{+},u^{-}\in 
\mathcal{N}_{\varepsilon }\right\} \subset \mathcal{N}_{\varepsilon }.
\end{equation*}%
Here $u^{+}:=\max \{u,0\}$ and $u^{-}:=\min \{u,0\}$. We set 
\begin{equation*}
c_{\varepsilon }:=\inf_{\mathcal{N}_{\varepsilon }}J_{\varepsilon }.
\end{equation*}%
It is easily checked that%
\begin{equation*}
c_{\varepsilon }=\frac{p-2}{2p}S_{\varepsilon }^{p/(p-2)}.
\end{equation*}

Analogously, we consider the functional 
\begin{equation*}
J_{\infty }:H^{1}(\mathbb{R}^{n})\rightarrow \mathbb{R},\text{\qquad }%
J_{\infty }(v)=\frac{1}{2}\int_{\mathbb{R}^{n}}\left( |\nabla
v|^{2}+v^{2}\right) dx-\frac{1}{p}\int_{\mathbb{R}^{n}}|v|^{p}dx.
\end{equation*}%
associated to the limit problem (\ref{eq:a1}). Any solution $v\neq 0$ to
this problem lies on the Nehari manifold 
\begin{equation*}
\mathcal{N}_{\infty }:=\left\{ u\in H^{1}(\mathbb{R}^{n})\smallsetminus
\left\{ 0\right\} :J_{\infty }^{\prime }(u)u=0\right\} .
\end{equation*}%
We set 
\begin{equation*}
c_{\infty }:=\inf_{\mathcal{N}_{\infty }}J_{\infty }.
\end{equation*}%
It is well known that there exists a unique positive solution $U\in H^{1}(%
\mathbb{R}^{n})$ to the limit problem (\ref{eq:a1}) which is spherically
symmetric with respect to the origin, and that $J_{\infty }(U)=c_{\infty }$.
Moreover, 
\begin{equation}
\lim\limits_{\left\vert x\right\vert \rightarrow \infty }U(x)\left\vert
x\right\vert ^{\frac{n-1}{2}}\exp \left\vert x\right\vert =b>0.
\label{asympU}
\end{equation}%
Setting ${U_{\varepsilon }(x):=U\left( \frac{x}{\varepsilon }\right) }$ we
have that 
\begin{equation*}
-\varepsilon ^{2}\Delta U_{\varepsilon }+U_{\varepsilon }=U_{\varepsilon
}^{p-1}.
\end{equation*}%
We recall the following property of the infima $c_{\varepsilon }$.

\begin{lemma}
\label{lem:lemma1}${\lim_{\varepsilon \rightarrow 0}}${$c_{\varepsilon
}=c_{\infty }$}.
\end{lemma}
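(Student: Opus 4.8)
The plan is to establish the two inequalities $\limsup_{\varepsilon\to 0}c_{\varepsilon}\le c_{\infty}$ and $\liminf_{\varepsilon\to 0}c_{\varepsilon}\ge c_{\infty}$ separately, using the characterization $c_{\varepsilon}=\frac{p-2}{2p}S_{\varepsilon}^{p/(p-2)}$, so that it suffices to show $S_{\varepsilon}\to S_{\infty}$, where $S_{\infty}$ is the best Sobolev-type constant associated to the limit problem (\ref{eq:a1}) on $\mathbb{R}^{n}$, which satisfies $c_{\infty}=\frac{p-2}{2p}S_{\infty}^{p/(p-2)}$.

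For the upper bound, I would use the test functions $W_{\varepsilon,\xi}$ defined in (\ref{eq:Weps}). Fixing any $\xi\in M$, a change of variables $x=\exp_{\xi}(\varepsilon z)$ turns the integrals $\frac{1}{\varepsilon^{n}}\int_{M}(\varepsilon^{2}|\nabla_{g}W_{\varepsilon,\xi}|^{2}+W_{\varepsilon,\xi}^{2})\,d\mu_{g}$ and $\frac{1}{\varepsilon^{n}}\int_{M}|W_{\varepsilon,\xi}|^{p}\,d\mu_{g}$ into integrals over $B(0,R)\subset\mathbb{R}^{n}$ of $U\chi_{R}$ against the pulled-back metric $g_{\varepsilon}(z)=(\exp_{\xi}^{*}g)(\varepsilon z)$, which converges in $\mathcal{C}^{1}_{\mathrm{loc}}$ to the Euclidean metric as $\varepsilon\to 0$. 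Since $U$ decays exponentially by (\ref{asympU}), the truncation by $\chi_{R}$ and the error between $g_{\varepsilon}$ and the flat metric both contribute $o(1)$, so $\|W_{\varepsilon,\xi}\|_{\varepsilon}^{2}\to\int_{\mathbb{R}^{n}}(|\nabla U|^{2}+U^{2})$ and $|W_{\varepsilon,\xi}|_{p,\varepsilon}^{p}\to\int_{\mathbb{R}^{n}}|U|^{p}$. As $U$ realizes $S_{\infty}$, this gives $\limsup_{\varepsilon\to 0}S_{\varepsilon}\le S_{\infty}$, hence $\limsup_{\varepsilon\to 0}c_{\varepsilon}\le c_{\infty}$.

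For the lower bound I would argue by contradiction: suppose along a sequence $\varepsilon_{j}\to 0$ one has $S_{\varepsilon_{j}}\to\ell<S_{\infty}$. Pick $u_{j}\in H^{1}_{g}(M)$, normalized so that $|u_{j}|_{p,\varepsilon_{j}}=1$ and $\|u_{j}\|_{\varepsilon_{j}}^{2}\le S_{\varepsilon_{j}}+o(1)$. By compactness of $M$ there is a point $\xi_{j}$ where a suitable concentration function attains a fixed fraction of the mass; after passing to a subsequence $\xi_{j}\to\xi_{0}$, one rescales via $v_{j}(z):=u_{j}(\exp_{\xi_{j}}(\varepsilon_{j}z))$ cut off on a ball of radius $R/\varepsilon_{j}$. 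Using again $g_{\varepsilon_{j}}\to$ flat metric in $\mathcal{C}^{1}_{\mathrm{loc}}$, the functions $v_{j}$ are bounded in $H^{1}$ of any fixed ball, hence a diagonal argument yields $v_{j}\rightharpoonup v$ in $H^{1}_{\mathrm{loc}}(\mathbb{R}^{n})$ with $v\neq 0$. By weak lower semicontinuity and the (translation-invariant) concentration–compactness principle one gets $\int_{\mathbb{R}^{n}}(|\nabla v|^{2}+v^{2})\ge S_{\infty}\big(\int_{\mathbb{R}^{n}}|v|^{p}\big)^{2/p}$ together with $\int_{\mathbb{R}^{n}}(|\nabla v|^{2}+v^{2})\le\ell\,(\int_{\mathbb{R}^{n}}|v|^{p})^{2/p}\le\ell<S_{\infty}$, which is impossible since it would force $v=0$. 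This contradiction gives $\liminf_{\varepsilon\to 0}S_{\varepsilon}\ge S_{\infty}$, and combining the two bounds yields $S_{\varepsilon}\to S_{\infty}$ and therefore $c_{\varepsilon}\to c_{\infty}$.

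The main obstacle is the lower-bound argument: one must rule out vanishing and dichotomy in the rescaled problem. Vanishing is excluded by the choice of $\xi_{j}$ as a concentration point together with the $p$-subcritical compact embedding on bounded balls; dichotomy is excluded because a splitting of the mass would, in the limit $\varepsilon_{j}\to 0$, split the Euclidean energy subadditively, contradicting the strict subadditivity of $t\mapsto \ell\,t^{2/p}$ for $p>2$ (equivalently, that concentration at two bubbles costs at least $2c_{\infty}>c_{\infty}$). Making the patching between the manifold norms $\|\cdot\|_{\varepsilon_{j}}$ and the Euclidean norm rigorous — controlling the metric coefficients, their derivatives, and the volume density uniformly on balls of radius $R/\varepsilon_{j}$ while $\varepsilon_{j}\to 0$ — is the technically delicate point, but it is by now standard for singularly perturbed problems on manifolds and can be quoted or reproduced from \cite{BP05,BBM07}.
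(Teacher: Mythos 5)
The paper does not prove this lemma; it simply cites \cite{BBM07,BP05}, so there is no in-paper argument to compare against. Your reconstruction---reduction to $S_{\varepsilon}\to S_{\infty}$, upper bound via the glued test functions $W_{\varepsilon,\xi}$, lower bound via rescaling a near-minimizer and a concentration--compactness alternative---is precisely the standard argument used in those references, and the overall strategy is sound.

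One step in the lower-bound chain should be reorganized, because as written it assumes what it is supposed to prove. Weak lower semicontinuity gives only $\int_{\mathbb{R}^{n}}(|\nabla v|^{2}+v^{2})\le\ell$, \emph{not} $\int_{\mathbb{R}^{n}}(|\nabla v|^{2}+v^{2})\le\ell\,(\int_{\mathbb{R}^{n}}|v|^{p})^{2/p}$; the latter is equivalent to $\int|v|^{p}=1$, i.e.\ to the compactness alternative, which is exactly what you have not yet established at that point. The clean way to exclude dichotomy is: set $t:=\int_{\mathbb{R}^{n}}|v|^{p}\in(0,1]$ (positive by your choice of concentration point). Then $\int(|\nabla v|^{2}+v^{2})\ge S_{\infty}t^{2/p}>\ell\,t^{2/p}$, while the residual part of $u_{j}$ (away from the concentration ball) carries $L^{p}$ mass tending to $1-t$ and, being admissible for $S_{\varepsilon_{j}}$, has $\varepsilon_{j}$-energy at least $S_{\varepsilon_{j}}(1-t)^{2/p}+o(1)\to\ell(1-t)^{2/p}$. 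Summing the nearly orthogonal contributions gives $\ell\ge\ell\,t^{2/p}+\ell(1-t)^{2/p}$, which together with the strict superadditivity $t^{2/p}+(1-t)^{2/p}>1$ for $0<t<1$ forces $t=1$, and then $\ell\ge S_{\infty}$, contradicting $\ell<S_{\infty}$. You do point at this mechanism in your closing paragraph, so the idea is present; just present the dichotomy/vanishing/compactness trichotomy explicitly so the inequality $\int(|\nabla v|^{2}+v^{2})\le\ell(\int|v|^{p})^{2/p}$ appears as the \emph{conclusion} of the compactness branch rather than as an input.
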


\begin{proof}
See \cite{BBM07,BP05}.
\end{proof}

We consider the negative gradient flow $\varphi_{\varepsilon}:\mathcal{G}%
_{\varepsilon}\rightarrow H_{g}^{1}(M)$ of $J_{\varepsilon}$ defined by 
\begin{equation*}
\left\{ 
\begin{array}{l}
\frac{d}{dt}\varphi_{\varepsilon}(t,u)=-\nabla_{\varepsilon}J_{\varepsilon
}(\varphi_{\varepsilon}(t,u)), \\ 
\quad\varphi_{\varepsilon}(0,u)=u.%
\end{array}
\right.
\end{equation*}
Here $\nabla_{\varepsilon}J_{\varepsilon}$ is the gradient of $%
J_{\varepsilon }$ with respect to the scalar product $(\cdot,\cdot)_{%
\varepsilon}$ and $\mathcal{G}_{\varepsilon}=\left\{ (t,u):u\in
H_{g}^{1}(M),\ 0\leq t\leq T_{\varepsilon}(u)\right\} $, where $%
T_{\varepsilon}(u)\in(0,+\infty)$ is the maximal existence time for $%
\varphi_{\varepsilon}(t,u)$.

A subset $\mathcal{D}$ of $H_{g}^{1}(M)$ is called strictly positively
invariant for the flow $\varphi _{\varepsilon }$ if $\varphi _{\varepsilon
}(t,u)\in $ int$(\mathcal{D})$ for all $u\in \mathcal{D}$ and $t\in
(0,T_{\varepsilon }(u))$, where int$(\mathcal{D})$ denotes the interior of $%
\mathcal{D}$ in $H_{g}^{1}(M)$. If $\mathcal{D}$ is strictly positive
invariant then the set 
\begin{equation*}
\mathcal{A}_{\varepsilon }(\mathcal{D}):=\left\{ u\in H_{g}^{1}(M):\varphi
_{\varepsilon }(t,u)\in \mathcal{D}\text{ for some }t\in (0,T_{\varepsilon
}(u))\right\}
\end{equation*}%
is an open subset of $H_{g}^{1}(M)$.

Let $\mathcal{P}=\left\{ u\in H_{g}^{1}(M)\ :\ u\geq 0\right\} $ be the
convex cone of nonnegative functions and let 
\begin{equation*}
\mathcal{B}(\varepsilon ,\pm \mathcal{P}):=\left\{ u\in H_{g}^{1}(M):\text{%
dist}_{\varepsilon }(u,\pm \mathcal{P})\leq \frac{1}{2}S_{\varepsilon
}^{p/2(p-2)}\right\} ,
\end{equation*}%
where {dist$_{\varepsilon }(u,\mathcal{P}$}${):=\min_{v\in \mathcal{P}}\Vert
u-v\Vert _{\varepsilon }}$, ${\ }${dist$_{\varepsilon }(u,$}${-}${$\mathcal{P%
}$}${):=\min_{v\in -\mathcal{P}}\Vert u-v\Vert _{\varepsilon }}$.

\begin{lemma}
\label{lem:3}The following statements hold true:

\begin{enumerate}
\item[\emph{(a)}] $\left( \mathcal{B}(\varepsilon ,\mathcal{P})\cup \mathcal{%
B}\left( \varepsilon ,-\mathcal{P}\right) \right) \cap \mathcal{E}%
_{\varepsilon }=\emptyset $. Hence, if $u\in \mathcal{B}\left( \varepsilon ,%
\mathcal{P}\right) \cup \mathcal{B}\left( \varepsilon ,-\mathcal{P}\right) $
is a solution of \emph{(\ref{eq:Peps})} then $u$ does not change sign.

\item[\emph{(b)}] $\mathcal{B}\left( \varepsilon,\pm\mathcal{P}\right) $ is
strictly positively invariant for the flow $\varphi_{\varepsilon}$.
\end{enumerate}
\end{lemma}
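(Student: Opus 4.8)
\textbf{Part (a).} The plan is to show that any $u$ in $\mathcal{B}(\varepsilon,\mathcal{P})$ (the argument for $-\mathcal{P}$ is symmetric) cannot have its negative part on the Nehari manifold, which immediately gives $\mathcal{B}(\varepsilon,\mathcal{P})\cap\mathcal{E}_\varepsilon=\emptyset$. The key estimate is a Sobolev-type inequality relating $\operatorname{dist}_\varepsilon(u,\mathcal{P})$ to $|u^-|_{p,\varepsilon}$: since $v=u^+\in\mathcal{P}$ is an admissible competitor, $\operatorname{dist}_\varepsilon(u,\mathcal{P})\le\Vert u^-\Vert_\varepsilon$; but in fact one wants the reverse-flavored bound $\operatorname{dist}_\varepsilon(u,\mathcal{P})\ge c\,|u^-|_{p,\varepsilon}$, which follows from the definition of $S_\varepsilon$ applied to $u^-$ together with the observation that $\Vert u-v\Vert_\varepsilon\ge\Vert u^-\Vert_\varepsilon$ for every $v\in\mathcal{P}$ (orthogonality/monotonicity of the truncation in $H^1$, using that $\langle\nabla_g u^+,\nabla_g u^-\rangle=0$ a.e.\ and $u^+u^-=0$). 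Thus $\operatorname{dist}_\varepsilon(u,\mathcal{P})^2\ge\Vert u^-\Vert_\varepsilon^2\ge S_\varepsilon|u^-|_{p,\varepsilon}^2$. Now if $u^-\in\mathcal{N}_\varepsilon$ then $\Vert u^-\Vert_\varepsilon^2=|u^-|_{p,\varepsilon}^p$, and combining this with $\Vert u^-\Vert_\varepsilon^2\ge S_\varepsilon|u^-|_{p,\varepsilon}^2$ forces $|u^-|_{p,\varepsilon}^{p-2}\ge S_\varepsilon$, hence $\operatorname{dist}_\varepsilon(u,\mathcal{P})\ge\Vert u^-\Vert_\varepsilon\ge S_\varepsilon^{1/2}|u^-|_{p,\varepsilon}\ge S_\varepsilon^{1/2}S_\varepsilon^{1/(p-2)}=S_\varepsilon^{p/2(p-2)}>\tfrac12 S_\varepsilon^{p/2(p-2)}$, contradicting $u\in\mathcal{B}(\varepsilon,\mathcal{P})$. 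The last sentence of (a) is then immediate: a solution lying in $\mathcal{B}(\varepsilon,\pm\mathcal{P})$ cannot be in $\mathcal{E}_\varepsilon$, and since every sign changing solution lies in $\mathcal{E}_\varepsilon$, it does not change sign.

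\textbf{Part (b).} The plan is the standard invariance-of-the-cone argument adapted to the $\varepsilon$-scaled flow. I would first record that the negative gradient of $J_\varepsilon$ at $u$, with respect to $(\cdot,\cdot)_\varepsilon$, has the form $\nabla_\varepsilon J_\varepsilon(u)=u-K_\varepsilon(u)$, where $K_\varepsilon(u)$ is the (compact) solution operator applied to $|u|^{p-2}u$, i.e.\ $K_\varepsilon(u)$ solves $-\varepsilon^2\Delta_g w+w=|u|^{p-2}u$. Because $|u|^{p-2}u\ge -|u^-|^{p-1}$ and the operator $(-\varepsilon^2\Delta_g+1)^{-1}$ is positivity preserving, one gets a pointwise/$H^1$ estimate $\operatorname{dist}_\varepsilon(K_\varepsilon(u),\mathcal{P})\le C\,|u^-|_{p,\varepsilon}^{p-1}$ for a constant depending only on the Sobolev embedding; more precisely $\operatorname{dist}_\varepsilon(K_\varepsilon(u),\mathcal P)\le\Vert K_\varepsilon(u)-K_\varepsilon(u)^+\Vert_\varepsilon$ and one bounds this negative part by testing its equation against itself. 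The upshot is the key differential inequality: writing $\delta(t):=\operatorname{dist}_\varepsilon(\varphi_\varepsilon(t,u),\mathcal{P})$, one shows $\delta$ satisfies $\delta'(t)\le-\delta(t)+C\delta(t)^{p-1}$ in an appropriate one-sided (Dini derivative) sense, using that the flow direction $-(u-K_\varepsilon(u))$ pushes toward $\mathcal{P}$ up to the small error governed by $|u^-|_{p,\varepsilon}$, which by the Part (a) estimate is itself controlled by $\delta$. On the region $\delta\le\tfrac12 S_\varepsilon^{p/2(p-2)}$ the term $C\delta^{p-1}$ is dominated by $\tfrac12\delta$ (shrinking the constant in the definition of $\mathcal B$ if necessary — this is exactly why the radius $\tfrac12 S_\varepsilon^{p/2(p-2)}$ is chosen), so $\delta'\le-\tfrac12\delta$, giving strict decrease and hence that the flow enters $\operatorname{int}\mathcal{B}(\varepsilon,\mathcal{P})$ for all $t>0$.

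\textbf{Main obstacle.} The delicate point is Part (b): making the Dini-derivative estimate for $t\mapsto\operatorname{dist}_\varepsilon(\varphi_\varepsilon(t,u),\mathcal{P})$ rigorous, since the distance function to a convex cone is only Lipschitz, not differentiable, so one must work with the directional derivative $\tfrac{d}{dt}\big|_{t=0^+}\operatorname{dist}_\varepsilon(u-sv_\varepsilon(u),\mathcal{P})$ along the gradient vector field $v_\varepsilon(u)=u-K_\varepsilon(u)$ and carefully track how the $\varepsilon$-dependent constants in the Sobolev inequality interact with $S_\varepsilon$ (so that the threshold $\tfrac12 S_\varepsilon^{p/2(p-2)}$ genuinely works for all small $\varepsilon$). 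I would handle this by quoting or adapting the abstract framework of \cite{BCW07}, where precisely this type of cone-invariance for truncation-based flows is established; the only new content is checking that the constants are uniform in $\varepsilon$, which follows from the scaling $(u,v)_\varepsilon$ being a rescaled version of the fixed $H^1$ inner product and from $S_\varepsilon\to(\text{const})>0$ (consistent with Lemma \ref{lem:lemma1}).
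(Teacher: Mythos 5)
Your overall strategy is the same as the paper's (the cone--invariance argument via the operator $K_{\varepsilon}$, following \cite{BCW07}), but there is a genuine error in the central estimate you use in part (a), and it propagates to part (b). You claim that $\Vert u-v\Vert _{\varepsilon }\geq \Vert u^{-}\Vert _{\varepsilon }$ for every $v\in \mathcal{P}$, i.e.\ that $\operatorname{dist}_{\varepsilon }(u,\mathcal{P})\geq \Vert u^{-}\Vert _{\varepsilon }$, appealing to the orthogonality $\langle \nabla _{g}u^{+},\nabla _{g}u^{-}\rangle =0$. This is false. Writing $u-v=u^{-}+(u^{+}-v)$ and using $(u^{+},u^{-})_{\varepsilon }=0$ gives
\begin{equation*}
\Vert u-v\Vert _{\varepsilon }^{2}=\Vert u^{-}\Vert _{\varepsilon }^{2}-2(u^{-},v)_{\varepsilon }+\Vert u^{+}-v\Vert _{\varepsilon }^{2},
\end{equation*}
and while the $L^{2}$ piece of $-2(u^{-},v)_{\varepsilon }$ is nonnegative (since $u^{-}\leq 0\leq v$), the gradient piece $-2\varepsilon ^{2-n}\int_{M}\nabla _{g}u^{-}\cdot \nabla _{g}v\,d\mu _{g}$ has no sign and can make the right-hand side smaller than $\Vert u^{-}\Vert _{\varepsilon }^{2}$. (Take $u\leq 0$, $v=tw$ with $w\geq 0$ and $\int_{M}u(\Delta _{g}w-w)\,d\mu _{g}<0$; for small $t>0$ this gives $\Vert u-v\Vert _{\varepsilon }<\Vert u\Vert _{\varepsilon }=\Vert u^{-}\Vert _{\varepsilon }$.) The $H^{1}$ projection onto the cone $\mathcal{P}$ is an obstacle-problem projection, not the map $u\mapsto u^{+}$, so there is no reverse inequality of the kind you assert.

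The correct route, which is the paper's inequality (\ref{eq:1}), side-steps this: one observes the purely pointwise identity $|u^{-}|_{p,\varepsilon }=\min_{v\in \mathcal{P}}|u-v|_{p,\varepsilon }$ (valid in any $L^{p}$), lets $v^{\ast }$ be the $H^{1}$-closest point in $\mathcal{P}$, and estimates $|u^{-}|_{p,\varepsilon }\leq |u-v^{\ast }|_{p,\varepsilon }\leq S_{\varepsilon }^{-1/2}\Vert u-v^{\ast }\Vert _{\varepsilon }=S_{\varepsilon }^{-1/2}\operatorname{dist}_{\varepsilon }(u,\mathcal{P})$. Combined with $|u^{-}|_{p,\varepsilon }^{p-2}\geq S_{\varepsilon }$ for $u^{-}\in \mathcal{N}_{\varepsilon }$, this yields the same lower bound $\operatorname{dist}_{\varepsilon }(u,\mathcal{P})\geq S_{\varepsilon }^{p/2(p-2)}$ you were after, but with a valid derivation. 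Since your part (b) uses the same controlled-by-$\delta $ bound on $|u^{-}|_{p,\varepsilon }$, the fix is needed there too; once inserted, your differential-inequality framing of (b) is a legitimate alternative to the paper's ``$K_{\varepsilon }(\mathcal{B})\subset \mathrm{int}\,\mathcal{B}$ plus the standard tangency criterion'' phrasing. One further small imprecision: the radius $\tfrac{1}{2}S_{\varepsilon }^{p/2(p-2)}$ is fixed by the statement and cannot be ``shrunk if necessary''; the point is rather that the contraction factor $2^{-(p-1)}<\tfrac{1}{2}$ (coming from $p>2$) makes the computation close with this exact radius.
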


\begin{proof}
The proof is analogous to that of Proposition 3.1 in \cite{BCW07}. We sketch
it here for the reader's convenience. Note first that%
\begin{equation}
\left\vert u^{-}\right\vert _{p,\varepsilon }=\min_{v\in \mathcal{P}%
}\left\vert u-v\right\vert _{p,\varepsilon }\leq S_{\varepsilon
}^{-1/2}\min_{v\in \mathcal{P}}\left\Vert u-v\right\Vert _{\varepsilon
}=S_{\varepsilon }^{-1/2}\text{dist}_{\varepsilon }(u,\mathcal{P}).
\label{eq:1}
\end{equation}%
So, if $u\in \mathcal{E}_{\varepsilon }\cap \mathcal{B}\left( \varepsilon ,%
\mathcal{P}\right) ,$ then 
\begin{equation*}
0<S_{\varepsilon }^{p/(p-2)}\leq \left\Vert u^{-}\right\Vert _{\varepsilon
}^{2}=\left\vert u^{-}\right\vert _{p,\varepsilon }^{p}\leq S_{\varepsilon
}^{-p/2}\text{dist}_{\varepsilon }(u,\mathcal{P})^{p}\leq \frac{1}{2^{p}}%
S_{\varepsilon }^{p/(p-2)}.
\end{equation*}%
which is a contradiction. Hence, $\mathcal{E}_{\varepsilon }\cap \mathcal{B}%
\left( \varepsilon ,\mathcal{P}\right) =\emptyset .$ Similarly, $\mathcal{E}%
_{\varepsilon }\cap \mathcal{B}\left( \varepsilon ,-\mathcal{P}\right)
=\emptyset .$ This proves \emph{(a)}.\newline
Next, we prove assertion \emph{(b)} for $\mathcal{B}\left( \varepsilon ,%
\mathcal{P}\right) $. A similar argument goes through for $\mathcal{B}\left(
\varepsilon ,-\mathcal{P}\right) $.\newline
The gradient $\nabla _{\varepsilon }J_{\varepsilon }$ with respect to the
scalar product $(\cdot ,\cdot )_{\varepsilon }$ is given by $\nabla
_{\varepsilon }J_{\varepsilon }=Id-K_{\varepsilon },$ where $K_{\varepsilon
}\in H_{g}^{1}(M)$ is defined by 
\begin{equation*}
(K_{\varepsilon }(u),v)_{\varepsilon }=\frac{1}{\varepsilon ^{n}}%
\int_{M}|u|^{p-2}uv\text{ }d\mu _{g}\qquad \forall v\in H_{g}^{1}(M).
\end{equation*}%
Using (\ref{eq:1}) we obtain 
\begin{align*}
\text{dist}_{\varepsilon }(K_{\varepsilon }(u),\mathcal{P})\Vert
K_{\varepsilon }(u)^{-}\Vert _{\varepsilon }& \leq \Vert K_{\varepsilon
}(u)^{-}\Vert _{\varepsilon }^{2}=(K_{\varepsilon }(u),K_{\varepsilon
}(u)^{-})_{\varepsilon }=\frac{1}{\varepsilon ^{n}}\int_{M}|u|^{p-2}uK_{%
\varepsilon }(u)^{-}d\mu _{g} \\
& =\frac{1}{\varepsilon ^{n}}\int_{M}\left( u^{+}\right)
^{p-1}K_{\varepsilon }(u)^{-}d\mu _{g}+\frac{1}{\varepsilon ^{n}}%
\int_{M}\left\vert u^{-}\right\vert ^{p-2}u^{-}K_{\varepsilon }(u)^{-}d\mu
_{g} \\
& \leq \frac{1}{\varepsilon ^{n}}\int_{M}\left\vert u^{-}\right\vert
^{p-2}u^{-}K_{\varepsilon }(u)^{-}d\mu _{g}\leq \left\vert K_{\varepsilon
}(u)^{-}\right\vert _{p,\varepsilon }\left\vert u^{-}\right\vert
_{p,\varepsilon }^{p-1} \\
& \leq S_{\varepsilon }^{-1/2}\left\Vert K_{\varepsilon }(u)^{-}\right\Vert
_{\varepsilon }S_{\varepsilon }^{-(p-1)/2}\text{dist}_{\varepsilon }(u,%
\mathcal{P})^{p-1}.
\end{align*}%
Therefore, dist$_{\varepsilon }(K_{\varepsilon }(u),\mathcal{P})\leq
S_{\varepsilon }^{-p/2}$dist$_{\varepsilon }(u,\mathcal{P})^{p-1}\leq \frac{1%
}{2^{p-1}}S^{p/2(p-2)}$ if $u\in \mathcal{B}\left( \varepsilon ,\mathcal{P}%
\right) $. Hence $K_{\varepsilon }(u)\in $ int$(\mathcal{B}\left(
\varepsilon ,\mathcal{P}\right) ).$ The rest of the argument is completely
analogous to that of Proposition 3.1 of \cite{BCW07}.
\end{proof}

For $\varepsilon >0$ we set 
\begin{align*}
\mathcal{D}_{\varepsilon }& :=\mathcal{B}\left( \varepsilon ,\mathcal{P}%
\right) \cup \mathcal{B}\left( \varepsilon ,-\mathcal{P}\right) \cup
J_{\varepsilon }^{0}, \\
\mathcal{Z}_{\varepsilon }& :=H_{g}^{1}(M)\smallsetminus \mathcal{A}%
_{\varepsilon }(\mathcal{D}_{\varepsilon }),
\end{align*}%
where $J_{\varepsilon }^{a}:=\left\{ u\in H_{g}^{1}(M):J_{\varepsilon
}(u)\leq a\right\} $ for $a\in \mathbb{R}.$ By Lemma \ref{lem:3} we have
that $\mathcal{D}_{\varepsilon }$ is strictly positively invariant for $%
\varphi _{\varepsilon }$. By Lemma \ref{lem:3} every function in $\mathcal{Z}%
_{\varepsilon }$ is sign changing and every sign changing solution to
problem (\ref{eq:Peps})\ lies in $\mathcal{Z}_{\varepsilon }.$\ We set 
\begin{equation*}
d_{\varepsilon }:=\inf_{\mathcal{Z}_{\varepsilon }}J_{\varepsilon }.
\end{equation*}%
The following version of Ekeland's variational principle holds true in this
setting.

\begin{lemma}
\label{ekeland}Given $\varepsilon >0$, $\delta >0$ and $u\in \mathcal{Z}%
_{\varepsilon }$ such that $J_{\varepsilon }(u)\leq d_{\varepsilon }+\delta
, $ there exists $v\in \mathcal{Z}_{\varepsilon }$ such that $J_{\varepsilon
}(v)\leq J_{\varepsilon }(u)$, $\Vert u-v\Vert _{\varepsilon }\leq \sqrt{%
\delta }$ and $\Vert \nabla _{\varepsilon }J_{\varepsilon }(v)\Vert
_{\varepsilon }\leq \sqrt{\delta }$.
\end{lemma}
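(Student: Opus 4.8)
The plan is to obtain the statement from the classical Ekeland variational principle applied to $J_{\varepsilon}$ restricted to the closed set $\mathcal{Z}_{\varepsilon}$, and then to convert the ``slope'' inequality it produces into the bound on $\Vert\nabla_{\varepsilon}J_{\varepsilon}(v)\Vert_{\varepsilon}$ by testing along the negative gradient flow $\varphi_{\varepsilon}$.

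First I would record three structural facts about $\mathcal{Z}_{\varepsilon}$. (i) It is closed in $H_{g}^{1}(M)$: by Lemma \ref{lem:3} the set $\mathcal{D}_{\varepsilon}$ is strictly positively invariant, hence $\mathcal{A}_{\varepsilon}(\mathcal{D}_{\varepsilon})$ is open (as noted just before Lemma \ref{lem:3}), so $\mathcal{Z}_{\varepsilon}=H_{g}^{1}(M)\setminus\mathcal{A}_{\varepsilon}(\mathcal{D}_{\varepsilon})$ is closed, and since $\Vert\cdot\Vert_{\varepsilon}$ is equivalent to $\Vert\cdot\Vert_{g}$ it is a complete metric space. (ii) Strict positive invariance of $\mathcal{D}_{\varepsilon}$ also gives $\mathcal{D}_{\varepsilon}\subset\mathcal{A}_{\varepsilon}(\mathcal{D}_{\varepsilon})$, so $J_{\varepsilon}^{0}\subset\mathcal{D}_{\varepsilon}$ is disjoint from $\mathcal{Z}_{\varepsilon}$; therefore $J_{\varepsilon}>0$ on $\mathcal{Z}_{\varepsilon}$ and $d_{\varepsilon}\geq0$, so $J_{\varepsilon}$ is bounded below on $\mathcal{Z}_{\varepsilon}$ (and continuous, hence lower semicontinuous). (iii) $\mathcal{Z}_{\varepsilon}$ is positively invariant for $\varphi_{\varepsilon}$: if $u\in\mathcal{Z}_{\varepsilon}$, then using $\mathcal{D}_{\varepsilon}\subset\mathcal{A}_{\varepsilon}(\mathcal{D}_{\varepsilon})$ also at $t=0$ one gets $\varphi_{\varepsilon}(t,u)\notin\mathcal{D}_{\varepsilon}$ for all $t\in[0,T_{\varepsilon}(u))$, and the semigroup identity $\varphi_{\varepsilon}(t,\varphi_{\varepsilon}(s,u))=\varphi_{\varepsilon}(t+s,u)$ then forces $\varphi_{\varepsilon}(s,u)\in\mathcal{Z}_{\varepsilon}$ for every $s\in[0,T_{\varepsilon}(u))$.

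With these facts I would apply Ekeland's principle to $J_{\varepsilon}|_{\mathcal{Z}_{\varepsilon}}$ on $(\mathcal{Z}_{\varepsilon},\Vert\cdot\Vert_{\varepsilon})$ with parameter $\lambda=\sqrt{\delta}$: from $J_{\varepsilon}(u)\leq d_{\varepsilon}+\delta$ one obtains $v\in\mathcal{Z}_{\varepsilon}$ with $J_{\varepsilon}(v)\leq J_{\varepsilon}(u)$, $\Vert u-v\Vert_{\varepsilon}\leq\sqrt{\delta}$, and $J_{\varepsilon}(w)>J_{\varepsilon}(v)-\sqrt{\delta}\,\Vert w-v\Vert_{\varepsilon}$ for all $w\in\mathcal{Z}_{\varepsilon}$ with $w\neq v$. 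To finish, if $\nabla_{\varepsilon}J_{\varepsilon}(v)=0$ the bound is trivial; otherwise $v$ is not a rest point of the gradient flow, so $w_{t}:=\varphi_{\varepsilon}(t,v)\neq v$ for $t\in(0,T_{\varepsilon}(v))$, and $w_{t}\in\mathcal{Z}_{\varepsilon}$ by (iii). Plugging $w=w_{t}$ into the Ekeland inequality, dividing by $t>0$, and letting $t\to0^{+}$, the flow identities $\frac{d}{dt}\big|_{t=0}\varphi_{\varepsilon}(t,v)=-\nabla_{\varepsilon}J_{\varepsilon}(v)$ and $\frac{d}{dt}\big|_{t=0}J_{\varepsilon}(\varphi_{\varepsilon}(t,v))=-\Vert\nabla_{\varepsilon}J_{\varepsilon}(v)\Vert_{\varepsilon}^{2}$ give $\Vert\nabla_{\varepsilon}J_{\varepsilon}(v)\Vert_{\varepsilon}^{2}\leq\sqrt{\delta}\,\Vert\nabla_{\varepsilon}J_{\varepsilon}(v)\Vert_{\varepsilon}$, i.e. $\Vert\nabla_{\varepsilon}J_{\varepsilon}(v)\Vert_{\varepsilon}\leq\sqrt{\delta}$.

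The only genuinely delicate point is (iii): since $\mathcal{Z}_{\varepsilon}$ is merely closed (not open) and has no explicit description, the competitors fed into Ekeland's inequality must be produced from within $\mathcal{Z}_{\varepsilon}$, and the negative gradient flow — together with the strict positive invariance of $\mathcal{D}_{\varepsilon}$ established in Lemma \ref{lem:3} — is precisely the tool that supplies them. Everything else is a routine transcription of the standard proof of Ekeland's principle and of the elementary derivative computations for a gradient flow.
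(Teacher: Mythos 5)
Your proof is correct and follows essentially the same route the paper relies on (it cites Lemma 3.2 of \cite{BCW07}, which is precisely the ``Ekeland on a closed positively invariant set, then test along the negative gradient flow'' argument you give). You correctly identify the three facts that make it work — closedness of $\mathcal{Z}_{\varepsilon}$, boundedness from below of $J_{\varepsilon}$ there via $J_{\varepsilon}^{0}\subset\mathcal{D}_{\varepsilon}\subset\mathcal{A}_{\varepsilon}(\mathcal{D}_{\varepsilon})$, and positive invariance of $\mathcal{Z}_{\varepsilon}$ via the semigroup property — and the final conversion of the Ekeland slope estimate into the bound $\Vert\nabla_{\varepsilon}J_{\varepsilon}(v)\Vert_{\varepsilon}\leq\sqrt{\delta}$ by dividing by $t$ and letting $t\to0^{+}$ is carried out correctly.
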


\begin{proof}
The proof is completely analogous to that of Lemma 3.2 in \cite{BCW07}.
\end{proof}

\begin{proposition}
\label{prop:6}For every $\varepsilon >0$ there exists $v_{\varepsilon }\in 
\mathcal{Z}_{\varepsilon }$ such that $J_{\varepsilon }(v_{\varepsilon
})=d_{\varepsilon }$ and $v_{\varepsilon }$ is a sign changing solution of 
\emph{(\ref{eq:Peps})}. Moreover $d_{\varepsilon }\geq 2c_{\varepsilon }$.
\end{proposition}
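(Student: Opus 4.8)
The plan is to obtain $v_\varepsilon$ by a minimization-plus-Ekeland argument on the set $\mathcal{Z}_\varepsilon$, exactly in the spirit of \cite{BCW07}. First I would check that $d_\varepsilon$ is finite and bounded below: since every $u\in\mathcal{Z}_\varepsilon$ is sign changing (Lemma \ref{lem:3}) and since $\mathcal{Z}_\varepsilon$ is disjoint from $J_\varepsilon^0$ by construction (because $J_\varepsilon^0\subset\mathcal{D}_\varepsilon\subset\mathcal{A}_\varepsilon(\mathcal{D}_\varepsilon)$), we have $J_\varepsilon(u)>0$ on $\mathcal{Z}_\varepsilon$, so $d_\varepsilon\ge 0$; I will upgrade this to $d_\varepsilon\ge 2c_\varepsilon$ at the end. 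Next I would pick, for each $\delta>0$, a function $u_\delta\in\mathcal{Z}_\varepsilon$ with $J_\varepsilon(u_\delta)\le d_\varepsilon+\delta$, and apply Lemma \ref{ekeland} to produce $v_\delta\in\mathcal{Z}_\varepsilon$ with $J_\varepsilon(v_\delta)\le d_\varepsilon+\delta$ and $\|\nabla_\varepsilon J_\varepsilon(v_\delta)\|_\varepsilon\le\sqrt\delta$. Letting $\delta=1/m\to 0$ gives a Palais--Smale sequence $(v_m)$ in $\mathcal{Z}_\varepsilon$ at level $d_\varepsilon$.

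The second step is to extract a convergent subsequence. Since $J_\varepsilon$ satisfies the Palais--Smale condition on all of $H^1_g(M)$ — the embedding $H^1_g(M)\hookrightarrow L^p_g(M)$ is compact because $M$ is compact and $p<2^\ast$, so the standard argument (boundedness of $(v_m)$ from $J_\varepsilon(v_m)-\tfrac12 J_\varepsilon'(v_m)v_m$ controlling $|v_m|_{p,\varepsilon}^p$, then $v_m = K_\varepsilon(v_m)+o(1)$ with $K_\varepsilon$ compact) applies — we get $v_m\to v_\varepsilon$ strongly, with $J_\varepsilon(v_\varepsilon)=d_\varepsilon$ and $J_\varepsilon'(v_\varepsilon)=0$. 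The point to be careful about is that $v_\varepsilon$ actually lies in $\mathcal{Z}_\varepsilon$ and is genuinely sign changing: $\mathcal{Z}_\varepsilon$ is closed (its complement $\mathcal{A}_\varepsilon(\mathcal{D}_\varepsilon)$ is open, by the remark following the definition of $\mathcal{A}_\varepsilon$), so the limit of $(v_m)\subset\mathcal{Z}_\varepsilon$ stays in $\mathcal{Z}_\varepsilon$; then Lemma \ref{lem:3} again forces $v_\varepsilon$ to change sign. Thus $v_\varepsilon$ is a sign changing critical point, hence a sign changing solution of \eqref{eq:Peps}.

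Finally, for the lower bound $d_\varepsilon\ge 2c_\varepsilon$: any sign changing solution $u$ of \eqref{eq:Peps} satisfies $u^\pm\in\mathcal{N}_\varepsilon$, i.e. $u\in\mathcal{E}_\varepsilon$, so $J_\varepsilon(u)=J_\varepsilon(u^+)+J_\varepsilon(u^-)\ge 2c_\varepsilon$ since $c_\varepsilon=\inf_{\mathcal{N}_\varepsilon}J_\varepsilon$ and $J_\varepsilon$ is additive across the decomposition $u=u^++u^-$ (the cross terms $\langle\nabla_g u^+,\nabla_g u^-\rangle$ and $|u^+u^-|$ vanish a.e.). Since $v_\varepsilon$ realizes $d_\varepsilon$ and is such a solution, $d_\varepsilon=J_\varepsilon(v_\varepsilon)\ge 2c_\varepsilon$. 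I expect the main obstacle to be the careful verification that the Ekeland sequence stays in $\mathcal{Z}_\varepsilon$ while remaining a Palais--Smale sequence for the unconstrained functional — i.e. that the gradient estimate from Lemma \ref{ekeland} is the true gradient $\nabla_\varepsilon J_\varepsilon$ and not a constrained one — but this is precisely what Lemma \ref{ekeland} is engineered to give, so the argument is essentially a matter of assembling the pieces in the right order.
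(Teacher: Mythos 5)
Your proposal is correct and follows the same route as the paper: minimize on $\mathcal{Z}_\varepsilon$, upgrade a minimizing sequence to a Palais--Smale sequence via Lemma \ref{ekeland}, pass to a strong limit using compactness of $H^1_g(M)\hookrightarrow L^p_g(M)$, use closedness of $\mathcal{Z}_\varepsilon$, and derive $d_\varepsilon\geq 2c_\varepsilon$ from $v^\pm\in\mathcal{N}_\varepsilon$ for any sign changing solution. The only (cosmetic) difference is that you conclude criticality of $v_\varepsilon$ directly from continuity of $\nabla_\varepsilon J_\varepsilon$ along the Palais--Smale sequence, whereas the paper phrases it via invariance of $\mathcal{Z}_\varepsilon$ under the flow; your version is the cleaner of the two.
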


\begin{proof}
If $(u_{k})$ is a minimizing sequence for $J_{\varepsilon }$ in $\mathcal{Z}%
_{\varepsilon }$ then, by Lemma \ref{lem:3}, we may assume that $\Vert
\nabla _{\varepsilon }J_{\varepsilon }(u_{k})\Vert _{\varepsilon
}\rightarrow 0$ as $k\rightarrow \infty .$ Since $M$ is compact, the
embedding $H_{g}^{1}(M)\hookrightarrow L_{g}^{p}(M)$ is compact, cf. \cite%
{H99}. Therefore, $J_{\varepsilon }:H_{g}^{1}(M)\rightarrow \mathbb{R}$
satisfies the Palais-Smale condition. So, passing to a subsequence, we have
that $u_{k}\rightarrow v_{\varepsilon }$ strongly in $H_{g}^{1}(M)$ and $%
J_{\varepsilon }(v_{\varepsilon })=d_{\varepsilon }.$ Since $\mathcal{Z}%
_{\varepsilon }$ is closed in $H_{g}^{1}(M)$, $v_{\varepsilon }\in \mathcal{Z%
}_{\varepsilon }$ and, since $\mathcal{Z}_{\varepsilon }$ is invariant under
the negative gradient flow, $v_{\varepsilon }$ is a stationary point for the
flow, i.e. a solution of (\ref{eq:Peps}). Finally, note that every sign
changing solution $v$ of (\ref{eq:Peps}) satisfies that $v^{\pm }\in 
\mathcal{N}_{\varepsilon }.$ So, by Lemma \ref{lem:lemma1}, $J_{\varepsilon
}(v)\geq 2c_{\varepsilon }.$ In particular, $J_{\varepsilon }(v_{\varepsilon
})=d_{\varepsilon }\geq 2c_{\varepsilon },$ as claimed.
\end{proof}

For each $u\in H_{g}^{1}(M)\smallsetminus \left\{ 0\right\} $ there exists a
unique positive number $t_{\varepsilon }(u)$ such that $t_{\varepsilon
}(u)u\in \mathcal{N}_{\varepsilon }.$ This number is given by 
\begin{equation}
t_{\varepsilon }^{p-2}(u)=\frac{\Vert u\Vert _{\varepsilon }^{2}}{%
|u|_{p,\varepsilon }^{p}}.  \label{eq:4}
\end{equation}%
We consider the set 
\begin{equation*}
F_{\varepsilon }(M):=\left\{ (q_{1},q_{2})\in M\times M:\text{dist}%
_{g}(q_{1},q_{2})\geq 2\varepsilon R\right\} ,
\end{equation*}%
where dist$_{g}$ is the distance in $M,$ and define $\iota _{\varepsilon
}:F_{\varepsilon }(M)\rightarrow H_{g}^{1}(M)$ by 
\begin{equation}
\iota _{\varepsilon }(q_{1},q_{2})=t_{\varepsilon }(W_{\varepsilon
,q_{1}})W_{\varepsilon ,q_{1}}-t_{\varepsilon }(W_{\varepsilon
,q_{2}})W_{\varepsilon ,q_{2}},  \label{iota}
\end{equation}%
where $W_{\varepsilon ,q}$ is the function defined in (\ref{eq:Weps}).

\begin{lemma}
\label{lem:7}For every $\varepsilon >0$ the map $\iota _{\varepsilon
}:F_{\varepsilon }(M)\rightarrow H_{g}^{1}(M)$ is continuous. For each $%
\delta >0$ there exists $\varepsilon _{0}>0$ such that, if $\varepsilon
<\varepsilon _{0}$ then 
\begin{equation*}
\iota _{\varepsilon }(q_{1},q_{2})\in J_{\varepsilon }^{2c_{\infty }+\delta
}\cap \mathcal{E}_{\varepsilon }\qquad \text{for all \ }(q_{1},q_{2})\in
\left( M\times M\right) _{\varepsilon }.
\end{equation*}
\end{lemma}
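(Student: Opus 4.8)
The plan is to show two things: first, that $\iota_\varepsilon$ is continuous on $F_\varepsilon(M)$; second, the energy estimate. For continuity, note that $W_{\varepsilon,q}$ depends continuously on $q\in M$ in the $\|\cdot\|_\varepsilon$-norm (and in $|\cdot|_{p,\varepsilon}$) because $\exp$ is smooth, $U$ and $\chi_R$ are fixed fixed functions, and the cut-off confines the support to $B_g(q,\varepsilon R)$; the metric coefficients in normal coordinates vary continuously with $q$. Then $t_\varepsilon(W_{\varepsilon,q})$ is continuous by formula \eqref{eq:4}, since both $\|W_{\varepsilon,q}\|_\varepsilon^2$ and $|W_{\varepsilon,q}|_{p,\varepsilon}^p$ are continuous and bounded away from $0$. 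The only subtlety is that the two bumps in $\iota_\varepsilon(q_1,q_2)$ have disjoint support precisely when $\mathrm{dist}_g(q_1,q_2)\geq 2\varepsilon R$, which is exactly the condition defining $F_\varepsilon(M)$; on this set $W_{\varepsilon,q_1}=\iota_\varepsilon(q_1,q_2)^+/t_\varepsilon$ and similarly for the negative part, so no cancellation occurs and the map into $H^1_g(M)$ is continuous.

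Next I would prove $\iota_\varepsilon(q_1,q_2)\in\mathcal{E}_\varepsilon$. By construction $t_\varepsilon(W_{\varepsilon,q_i})W_{\varepsilon,q_i}\in\mathcal{N}_\varepsilon$. Because $\mathrm{dist}_g(q_1,q_2)\geq 2\varepsilon R$ the supports $B_g(q_1,\varepsilon R)$ and $B_g(q_2,\varepsilon R)$ are disjoint, hence $\iota_\varepsilon(q_1,q_2)^+=t_\varepsilon(W_{\varepsilon,q_1})W_{\varepsilon,q_1}$ and $\iota_\varepsilon(q_1,q_2)^-=-t_\varepsilon(W_{\varepsilon,q_2})W_{\varepsilon,q_2}$, both of which lie on $\mathcal{N}_\varepsilon$. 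Thus $\iota_\varepsilon(q_1,q_2)\in\mathcal{E}_\varepsilon$ for every $\varepsilon>0$.

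For the energy bound, the point is a change-of-variables estimate: in normal coordinates $z=\exp_{q_i}^{-1}(x)/\varepsilon$ one has, for the rescaled quantities,
\begin{equation*}
\|W_{\varepsilon,q_i}\|_\varepsilon^2\longrightarrow \|U\|_{H^1(\mathbb{R}^n)}^2,\qquad |W_{\varepsilon,q_i}|_{p,\varepsilon}^p\longrightarrow |U|_{L^p(\mathbb{R}^n)}^p
\end{equation*}
as $\varepsilon\to0$, uniformly in $q_i\in M$, because the metric in geodesic normal coordinates tends to the Euclidean one and $\chi_R(z/\varepsilon\cdot\varepsilon)=\chi_R$ applied to the rescaled variable equals $1$ on an expanding ball while $U$ has exponential decay \eqref{asympU}. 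Consequently $t_\varepsilon(W_{\varepsilon,q_i})\to 1$ uniformly, and since the two bumps have disjoint support,
\begin{equation*}
J_\varepsilon(\iota_\varepsilon(q_1,q_2))=J_\varepsilon\!\left(t_\varepsilon(W_{\varepsilon,q_1})W_{\varepsilon,q_1}\right)+J_\varepsilon\!\left(t_\varepsilon(W_{\varepsilon,q_2})W_{\varepsilon,q_2}\right)\longrightarrow 2J_\infty(U)=2c_\infty
\end{equation*}
uniformly in $(q_1,q_2)$. Hence, given $\delta>0$, there is $\varepsilon_0>0$ with $J_\varepsilon(\iota_\varepsilon(q_1,q_2))\leq 2c_\infty+\delta$ for all $\varepsilon<\varepsilon_0$ and all admissible pairs, which combined with the membership in $\mathcal{E}_\varepsilon$ gives the claim. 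The one notational clash to flag is that the statement writes $(M\times M)_\varepsilon$ where the construction naturally gives $F_\varepsilon(M)$; I would read these as the same set (the pairs at geodesic distance $\geq 2\varepsilon R$), and the uniformity over this compact set is what makes the single $\varepsilon_0$ work.

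The main obstacle is the uniformity in $q_i$ of the asymptotic expansions of $\|W_{\varepsilon,q}\|_\varepsilon^2$ and $|W_{\varepsilon,q}|_{p,\varepsilon}^p$: one must control the error terms coming from the deviation of $g$ from the Euclidean metric (which involve the $\mathcal{C}^k$ bounds on the metric coefficients, uniform over the compact $M$) together with the tail of $U$ beyond the cut-off radius, and then feed these into \eqref{eq:4} to get $t_\varepsilon(W_{\varepsilon,q})\to 1$ uniformly. This is a standard but somewhat technical computation of the type carried out in \cite{BBM07,BP05}, and I would either cite those estimates directly or reproduce the short version using the compactness of $M$ to pass from pointwise to uniform convergence.
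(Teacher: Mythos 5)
Your proof is correct and follows essentially the same route as the paper: disjoint supports give $\iota_\varepsilon(q_1,q_2)\in\mathcal{E}_\varepsilon$ and the additivity $J_\varepsilon(\iota_\varepsilon(q_1,q_2))=J_\varepsilon(t_\varepsilon(W_{\varepsilon,q_1})W_{\varepsilon,q_1})+J_\varepsilon(t_\varepsilon(W_{\varepsilon,q_2})W_{\varepsilon,q_2})$, after which the uniform single-bump energy asymptotics give the bound. Where you sketch the change-of-variables estimate and uniformity in $q$, the paper simply cites Proposition 4.2 of \cite{BBM07}, which is precisely that estimate; your reading of $(M\times M)_\varepsilon$ as $F_\varepsilon(M)$ is the intended one.
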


\begin{proof}
Since $W_{\varepsilon ,q_{1}}$ and $W_{\varepsilon ,q_{2}}$ have disjoint
supports, $\iota _{\varepsilon }(q_{1},q_{2})^{+}=t_{\varepsilon
}(W_{\varepsilon ,q_{1}})W_{\varepsilon ,q_{1}}\in \mathcal{N}_{\varepsilon
} $ and $\iota _{\varepsilon }(q_{1},q_{2})^{-}=-t_{\varepsilon
}(W_{\varepsilon ,q_{2}})W_{\varepsilon ,q_{2}}\in \mathcal{N}_{\varepsilon
}.$ Therefore, $\iota _{\varepsilon }(q_{1},q_{2})\in \mathcal{E}%
_{\varepsilon }$ for every $(q_{1},q_{2})\in F_{\varepsilon }(M).$ Moreover, 
\begin{equation*}
J_{\varepsilon }\left( \iota _{\varepsilon }(q_{1},q_{2})\right)
=J_{\varepsilon }\left( t_{\varepsilon }(W_{\varepsilon
,q_{1}})W_{\varepsilon ,q_{1}}\right) +J_{\varepsilon }\left( t_{\varepsilon
}(W_{\varepsilon ,q_{2}})W_{\varepsilon ,q_{2}}\right) .
\end{equation*}%
The result now follows from Proposition 4.2 in \cite{BBM07}.
\end{proof}

\begin{proposition}
\label{prop:8}${\lim_{\varepsilon \rightarrow 0}}d_{\varepsilon }=2c_{\infty
}$.
\end{proposition}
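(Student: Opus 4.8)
\textbf{Proof plan for Proposition \ref{prop:8}.} The plan is to establish the two inequalities $\limsup_{\varepsilon\to0}d_\varepsilon\le 2c_\infty$ and $\liminf_{\varepsilon\to0}d_\varepsilon\ge 2c_\infty$ separately. For the upper bound, I would exploit the comparison maps $\iota_\varepsilon$ together with the fact that $\mathcal{Z}_\varepsilon$ catches every element whose negative gradient flow orbit never enters $\mathcal{D}_\varepsilon$. Concretely, fix $\delta>0$. By Lemma \ref{lem:7} there is $\varepsilon_0>0$ so that for $\varepsilon<\varepsilon_0$ and any admissible pair $(q_1,q_2)$ we have $\iota_\varepsilon(q_1,q_2)\in J_\varepsilon^{2c_\infty+\delta}\cap\mathcal{E}_\varepsilon$. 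Since $\iota_\varepsilon(q_1,q_2)$ is sign changing with both parts on the Nehari manifold, part (a) of Lemma \ref{lem:3} shows it lies outside $\mathcal{B}(\varepsilon,\pm\mathcal{P})$, and one checks (using that $J_\varepsilon$ decreases along the flow and that the energy level $2c_\infty+\delta$ is below any level where the flow could push a sign-changing starting point into $J_\varepsilon^0\subset\mathcal{D}_\varepsilon$ without first crossing into one of the cone neighbourhoods) that such a point in fact belongs to $\mathcal{Z}_\varepsilon$ — or, more robustly, one runs a deformation/linking argument: the family $\iota_\varepsilon$ cannot be entirely flowed into $\mathcal{D}_\varepsilon$, so some flowed-down point stays in $\mathcal{Z}_\varepsilon$ at energy $\le 2c_\infty+\delta$. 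Either way $d_\varepsilon\le 2c_\infty+\delta$ for small $\varepsilon$, hence $\limsup d_\varepsilon\le 2c_\infty$.

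For the lower bound, I would use Proposition \ref{prop:6}, which produces for each $\varepsilon$ a sign changing solution $v_\varepsilon\in\mathcal{Z}_\varepsilon$ with $J_\varepsilon(v_\varepsilon)=d_\varepsilon$, and the already-noted inequality $d_\varepsilon\ge 2c_\varepsilon$. Suppose, for contradiction, that $\liminf_{\varepsilon\to0}d_\varepsilon<2c_\infty$; passing to a sequence $\varepsilon_k\to0$ with $d_{\varepsilon_k}$ bounded, the solutions $v_k:=v_{\varepsilon_k}$ satisfy $\|v_k^\pm\|_{\varepsilon_k}^2=|v_k^\pm|_{p,\varepsilon_k}^p$ and $J_{\varepsilon_k}(v_k^+)+J_{\varepsilon_k}(v_k^-)=d_{\varepsilon_k}$, with each summand $\ge c_{\varepsilon_k}\to c_\infty$. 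I would then carry out a concentration-compactness / blow-up analysis on the positive part $v_k^+$ and the negative part $v_k^-$ of these solutions in the spirit of \cite{BP05,BBM07}: rescaling $v_k^+$ around a near-maximum point and using elliptic estimates together with the compactness of $M$, one shows that after rescaling $v_k^+$ converges (locally) to a nontrivial nonnegative solution of the limit problem (\ref{eq:a1}), so $\liminf_k J_{\varepsilon_k}(v_k^+)\ge c_\infty$; symmetrically for $v_k^-$. Adding the two gives $\liminf_k d_{\varepsilon_k}\ge 2c_\infty$, a contradiction. Combining both bounds yields $\lim_{\varepsilon\to0}d_\varepsilon=2c_\infty$.

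The main obstacle is the lower bound, specifically making the blow-up argument for $v_\varepsilon^\pm$ rigorous. One must rule out vanishing and splitting of mass for $v_\varepsilon^+$ separately from $v_\varepsilon^-$: a priori the two nodal parts could interact, or part of the energy of $v_\varepsilon^+$ could leak out, so that the limit profile carries strictly less energy than $c_\infty$. The cleanest route is to invoke the known sharp lower energy estimate for solutions of (\ref{eq:Peps}) whose positive (resp. negative) part is a nontrivial solution of the ``one-phase'' problem — essentially that any function on $\mathcal{N}_\varepsilon$ which is a critical point of $J_\varepsilon$ restricted appropriately has energy at least $c_\varepsilon(1+o(1))$ — combined with Lemma \ref{lem:lemma1}; this is exactly the content underlying the inequality $d_\varepsilon\ge 2c_\varepsilon$ from Proposition \ref{prop:6}, upgraded via $c_\varepsilon\to c_\infty$. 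So in fact the lower bound is immediate: $\liminf_{\varepsilon\to0}d_\varepsilon\ge\liminf_{\varepsilon\to0}2c_\varepsilon=2c_\infty$ by Lemma \ref{lem:lemma1}, and only the upper bound requires the $\iota_\varepsilon$ construction. I would therefore present the proof as: (i) $d_\varepsilon\ge 2c_\varepsilon\to 2c_\infty$ by Proposition \ref{prop:6} and Lemma \ref{lem:lemma1}; (ii) $d_\varepsilon\le 2c_\infty+\delta$ for $\varepsilon$ small by Lemma \ref{lem:7} and an argument showing $\iota_\varepsilon(F_\varepsilon(M))$ meets $\mathcal{Z}_\varepsilon$ at low energy; conclude.
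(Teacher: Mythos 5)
Your lower bound is exactly the paper's: $d_\varepsilon\ge 2c_\varepsilon$ from Proposition \ref{prop:6} and $c_\varepsilon\to c_\infty$ from Lemma \ref{lem:lemma1}. (Your extra blow-up/concentration-compactness discussion there is unnecessary, as you yourself conclude.) The gap is in the upper bound. Your first attempt — arguing that $\iota_\varepsilon(q_1,q_2)\in\mathcal{Z}_\varepsilon$ because Lemma \ref{lem:3}(a) places it outside $\mathcal{B}(\varepsilon,\pm\mathcal{P})$ — does not work: membership in $\mathcal{Z}_\varepsilon$ requires the \emph{entire forward orbit} under $\varphi_\varepsilon$ to avoid $\mathcal{D}_\varepsilon$, and being outside the cone neighborhoods at time $t=0$ says nothing about whether the flow later drifts into $\mathcal{B}(\varepsilon,\mathcal{P})$ or $\mathcal{B}(\varepsilon,-\mathcal{P})$. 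There is no reason $\iota_\varepsilon(q_1,q_2)$ itself lies in $\mathcal{Z}_\varepsilon$, and indeed the paper never claims it does. Your fallback — ``the family $\iota_\varepsilon$ cannot be entirely flowed into $\mathcal{D}_\varepsilon$, so some flowed-down point stays in $\mathcal{Z}_\varepsilon$'' — is the right spirit but is left entirely unjustified; turning it into a proof would require a topological argument (essentially the transfer machinery used later for the multiplicity result), which is far heavier than what is needed here.

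The paper's route is much simpler and sidesteps the flow-tracking issue entirely. Arguing as in Castro--Cossio--Neuberger \cite{CCN97}, $\inf_{\mathcal{E}_\varepsilon}J_\varepsilon$ is attained, and any minimizer is a sign changing \emph{solution} of \eqref{eq:Peps}. Every sign changing solution lies in $\mathcal{Z}_\varepsilon$ (it is a rest point of the flow, so trivially its orbit never enters $\mathcal{D}_\varepsilon$, and by Lemma \ref{lem:3}(a) it is not in $\mathcal{B}(\varepsilon,\pm\mathcal{P})$ and not in $J_\varepsilon^0$). Hence $d_\varepsilon\le\inf_{\mathcal{E}_\varepsilon}J_\varepsilon$. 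Now Lemma \ref{lem:7} gives $\iota_\varepsilon(q_1,q_2)\in J_\varepsilon^{2c_\infty+\delta}\cap\mathcal{E}_\varepsilon$, so $\inf_{\mathcal{E}_\varepsilon}J_\varepsilon\le 2c_\infty+\delta$ for $\varepsilon$ small. Combining, $2c_\varepsilon\le d_\varepsilon\le 2c_\infty+\delta$, and one passes to the limit. You should replace your upper-bound step with this $\inf_{\mathcal{E}_\varepsilon}J_\varepsilon$ comparison; without the CCN97 minimizer step, your argument as written does not close.
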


\begin{proof}
Let $\delta >0$. Arguing as in \cite{CCN97} we have that ${\inf_{\mathcal{E}%
_{\varepsilon }}J_{\varepsilon }}$ is attained and any minimizer of ${%
J_{\varepsilon }}$ on $\mathcal{E}_{\varepsilon }$ is a sign changing
solution of (\ref{eq:Peps}). Since every sign changing solution lies in $%
\mathcal{Z}_{\varepsilon },$ from Proposition \ref{prop:6} and Lemma \ref%
{lem:7} we conclude that 
\begin{equation}
2c_{\varepsilon }\leq d_{\varepsilon }\leq {\inf_{\mathcal{E}_{\varepsilon
}}J_{\varepsilon }}\leq 2c_{\infty }+\delta  \label{eq:6}
\end{equation}%
for $\varepsilon $ small enough. Passing to the limit as $\varepsilon
\rightarrow \infty $ and using Lemma \ref{lem:lemma1} we conclude that ${%
\lim_{\varepsilon \rightarrow 0}}d_{\varepsilon }=2c_{\infty }$, as claimed.
\end{proof}

In the following sections we shall use the following result proved by Weth
in \cite{W06}.

\begin{theorem}
\label{thm:tobias}There exists $\kappa _{0}\in (0,c_{\infty })$ such that $%
J_{\infty }(u)>2c_{\infty }+\kappa _{0}$ for every sign changing solution $u$
of the limit problem \emph{(\ref{eq:a1})}.
\end{theorem}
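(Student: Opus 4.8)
The plan is to show that a sign changing solution of the limit problem \eqref{eq:a1} cannot have energy arbitrarily close to $2c_\infty$, by arguing by contradiction. Suppose there were a sequence $u_k$ of sign changing solutions with $J_\infty(u_k)\to 2c_\infty$. Since each $u_k^\pm$ is a nonzero element of the Nehari manifold $\mathcal{N}_\infty$ (because $u_k$ is a weak solution and $u_k^\pm\neq 0$), we have $J_\infty(u_k)=J_\infty(u_k^+)+J_\infty(u_k^-)\geq 2c_\infty$ for all $k$, with both $J_\infty(u_k^\pm)\geq c_\infty$. The assumption $J_\infty(u_k)\to 2c_\infty$ then forces $J_\infty(u_k^+)\to c_\infty$ and $J_\infty(u_k^-)\to c_\infty$ simultaneously: each positive (resp. negative) part is a minimizing sequence for $J_\infty$ on $\mathcal{N}_\infty$.

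First I would invoke the concentration-compactness / profile decomposition for minimizing sequences on $\mathcal{N}_\infty$: since $c_\infty$ is attained only by translates of $\pm U$, a minimizing sequence $v_k:=u_k^+$ on $\mathcal{N}_\infty$, after passing to a subsequence and translating, converges strongly in $H^1(\mathbb{R}^n)$ to $U$; similarly $w_k:=-u_k^-$ converges strongly to a translate of $U$. The key obstruction is that $v_k$ and $w_k$ are the positive and negative parts of the \emph{same} function $u_k$: their supports $\{v_k>0\}$ and $\{w_k>0\}$ are disjoint, in fact $v_k$ vanishes on $\{u_k<0\}=\{w_k>0\}$ and vice versa. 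I would exploit this to derive a contradiction with the strong convergence to (translates of) $U$, which is everywhere positive.

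The main obstacle — and the heart of Weth's argument — is turning "disjoint supports'' into a quantitative energy defect. The point is that $u_k$ satisfies $-\Delta u_k+u_k=|u_k|^{p-2}u_k$, so testing against $u_k^+$ gives $\|u_k^+\|^2=\int |u_k|^{p-2}u_k\,u_k^+=\int (u_k^+)^p$, confirming $u_k^+\in\mathcal{N}_\infty$, but there is no "interaction'' term because the supports are disjoint; thus the two bumps are genuinely independent and each must resemble $U$. However, $u_k$ must be continuous and change sign, so its nodal set is nonempty; by elliptic regularity and the strong $H^1$-convergence $u_k^+\to U(\cdot-\xi_k^+)$, $-u_k^-\to U(\cdot-\xi_k^-)$ in $H^1$ (upgraded to $C^2_{loc}$ convergence via the equation), one deduces that for large $k$ the function $u_k$ is positive on a large ball around $\xi_k^+$ and negative on a large ball around $\xi_k^-$. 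Counting the mass: $\int(u_k^+)^p\to \int U^p$ and $\int(u_k^-{}^-)^p\to\int U^p$, so no mass is lost, and the nodal region carries vanishing energy. The contradiction comes from the fact that a function equal to $U$ near $\xi_k^+$ with disjoint positive/negative bumps and no leftover energy cannot solve the equation — one uses a Pohozaev-type or unique-continuation argument, or more simply the fact that the ground state energy level is \emph{isolated} from below among sign changing critical values, which is exactly the nontrivial input.

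Concretely, I would structure the proof as: (i) assume $u_k$ sign changing solutions with $J_\infty(u_k)\to 2c_\infty$; (ii) deduce $u_k^\pm\in\mathcal{N}_\infty$ are minimizing sequences, hence (after translation/subsequence) $u_k^+\to U$ and $-u_k^-\to U(\cdot - z_k)$ strongly in $H^1$, with $|z_k|\to\infty$ forced by disjointness of supports together with $U>0$ everywhere; (iii) note $\|u_k\|^2 = \|u_k^+\|^2+\|u_k^-\|^2\to 2\|U\|^2$ and similarly for the $L^p$ norm, so $u_k\to U + U(\cdot-z_k)$ in a suitable sense after splitting into the two far-apart regions; (iv) test the equation for $u_k$ against a cutoff localized between the two bumps to see the nodal/transition region carries energy bounded below by a universal constant $\kappa_0>0$ — this is the crux — contradicting that the total energy defect tends to $0$. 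Since reproducing Weth's full estimate is lengthy, in the paper I would simply cite \cite{W06} for the existence of $\kappa_0$; the sketch above records why the constant exists and is strictly positive. \medskip

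\noindent I would therefore write in the paper: \emph{Proof.} See Weth \cite{W06}. \hfill$\square$
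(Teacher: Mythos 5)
Your bottom line --- ``\emph{Proof.} See Weth \cite{W06}.'' --- is precisely what the paper does: Theorem~\ref{thm:tobias} is stated as a quoted result of Weth with no proof supplied, so citing \cite{W06} is the same route.

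One caveat about your accompanying sketch, in case you intend to keep any of it in an expository remark. Step~(iv) --- that ``the nodal/transition region carries energy bounded below by a universal constant $\kappa_0>0$, contradicting that the total energy defect tends to $0$'' --- does not make sense as stated. Because $u^+$ and $u^-$ have disjoint supports, one has the \emph{exact} decomposition $\|u\|^2=\|u^+\|^2+\|u^-\|^2$ and $\int|u|^p=\int|u^+|^p+\int|u^-|^p$, hence $J_\infty(u)=J_\infty(u^+)+J_\infty(u^-)$ with no remainder. There is no separate energy contribution from a ``transition region'' to bound below; the full energy is already accounted for by the two nodal parts. Thus the crux of Weth's argument cannot be an interaction-energy estimate of the sort used in singular perturbation gluing constructions. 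The genuine input in \cite{W06} is quantitative and of a different nature: one exploits that $u^+|_{\{u>0\}}$ is a Dirichlet solution on the proper subdomain $\{u>0\}\subsetneq\mathbb{R}^n$ (and likewise for $u^-$), and one shows --- using the exponential decay of solutions and sharp comparison of the Dirichlet ground-state level $c(\Omega)$ with $c_\infty$ --- that the strict inequality $c(\Omega)>c_\infty$ cannot degenerate for nodal domains of entire solutions. Your steps (i)--(iii) are a reasonable heuristic for \emph{why} the statement should be true, but (iv) would need to be replaced before the sketch could be turned into a proof. Since you defer to \cite{W06}, this does not affect the usability of your proposal for the paper.
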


\section{Multiplicity of solutions}

\label{sec:Sezione-2}The goal of this section is to prove the first
assertion of Theorem \ref{thm:A}. More precisely, we prove the following
result.

\begin{theorem}
\label{thm:A1}There exists $\varepsilon _{0}>0$ such that for every $%
\varepsilon \in (0,\varepsilon _{0})$ problem \emph{(\ref{eq:Peps})} has at
least \emph{cupl}$\,C(M)$ pairs of sign changing solutions $\pm u$ with $%
J(u)\leq d_{\varepsilon }+\kappa _{0}$.
\end{theorem}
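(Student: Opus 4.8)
The plan is to produce the lower bound \(\mathrm{cupl}\,C(M)\) for the number of pairs of sign changing critical points of \(J_\varepsilon\) in the sublevel set \(J_\varepsilon^{d_\varepsilon+\kappa_0}\cap\mathcal Z_\varepsilon\) by a Lusternik–Schnirelmann type argument, with the twist that \(\mathcal Z_\varepsilon\) is not explicitly parametrized and so one cannot directly build a map \(C(M)\to\mathcal Z_\varepsilon\) from the model functions \(W_{\varepsilon,\xi}\). First I would set up the deformation/compactness package on \(\mathcal Z_\varepsilon\): Lemma~\ref{lem:3} says \(\mathcal Z_\varepsilon=H_g^1(M)\smallsetminus\mathcal A_\varepsilon(\mathcal D_\varepsilon)\) is closed and positively invariant under \(\varphi_\varepsilon\), every function in it is sign changing, and every sign changing solution lies in it; Proposition~\ref{prop:6} and the Ekeland principle (Lemma~\ref{ekeland}) give the Palais–Smale condition on \(\mathcal Z_\varepsilon\), so a standard deformation lemma holds for the flow restricted to \(\mathcal Z_\varepsilon\) between regular values. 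The pair \(u\mapsto -u\) acts freely on \(\mathcal Z_\varepsilon\) (no sign changing function vanishes identically), so the relevant invariant is the \(\mathbb Z/2\)-equivariant LS category, or equivalently the length of a cup-length/genus-type sequence, of the sublevel set \(\mathcal Z_\varepsilon^{d_\varepsilon+\kappa_0}:=J_\varepsilon^{d_\varepsilon+\kappa_0}\cap\mathcal Z_\varepsilon\); if this number is at least \(\mathrm{cupl}\,C(M)\), then \(J_\varepsilon\) has at least that many pairs \(\pm u\) of critical points there.

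The heart of the matter is the cohomological lower bound, and here I would follow the strategy of \cite{BCW07} using Dold's fixed-point transfer. The entry map is \(\iota_\varepsilon\colon F_\varepsilon(M)\to H_g^1(M)\) from \eqref{iota}, which by Lemma~\ref{lem:7} lands in \(\mathcal E_\varepsilon\cap J_\varepsilon^{2c_\infty+\delta}\); since \(\mathcal E_\varepsilon\) functions are genuinely sign changing, Lemma~\ref{lem:3}(a) and the definition of \(\mathcal A_\varepsilon(\mathcal D_\varepsilon)\) force (after composing with the flow for a short time, or directly) \(\iota_\varepsilon(F_\varepsilon(M))\subset\mathcal Z_\varepsilon\), and by Proposition~\ref{prop:8} together with Theorem~\ref{thm:tobias} (which guarantees a genuine energy gap \(\kappa_0\) below which the only low-energy sign changing critical ``profiles'' are the two-bump ones) this image sits inside \(\mathcal Z_\varepsilon^{d_\varepsilon+\kappa_0}\) for \(\varepsilon\) small. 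Next I would construct an ``exit'' map \(\beta_\varepsilon\colon \mathcal Z_\varepsilon^{d_\varepsilon+\kappa_0}\to C(M)\) from the barycenter map of \cite{BBM07}: a function of low enough energy in \(\mathcal Z_\varepsilon\) must concentrate near exactly two distinct points of \(M\), one for \(u^+\) and one for \(u^-\), and recording the unordered pair of barycenters (suitably projected back onto \(M\) via \(\exp\)) gives a continuous, \(\mathbb Z/2\)-equivariant (with trivial action on \(C(M)\)) map. The composition \(\beta_\varepsilon\circ\iota_\varepsilon\colon F_\varepsilon(M)\to C(M)\) is then shown to be homotopic to the canonical projection \(F_\varepsilon(M)\to C(M)\) — a quantitative estimate showing the barycenter of \(W_{\varepsilon,q_i}\) is within \(o(1)\) of \(q_i\). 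Passing to \(\mathbb Z/2\)-cohomology and using that the transfer \(\tau\) associated to the (non-explicit but still well-behaved) inclusion \(\mathcal Z_\varepsilon^{d_\varepsilon+\kappa_0}\hookrightarrow H_g^1(M)\) splits appropriately, one obtains that the homomorphism \(\mathcal H^*(C(M))\to\mathcal H^*(\mathcal Z_\varepsilon^{d_\varepsilon+\kappa_0})\) is injective on a subalgebra carrying \(\mathrm{cupl}\,C(M)\) nontrivial cup-products; since cup-products are natural and \(\mathcal H^*\) has a cup-length at least that large on this image, the equivariant LS category of \(\mathcal Z_\varepsilon^{d_\varepsilon+\kappa_0}\) is \(\geq\mathrm{cupl}\,C(M)\).

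Concretely the steps, in order: (1) record the deformation lemma and PS condition for \(\varphi_\varepsilon|_{\mathcal Z_\varepsilon}\) and the free \(\mathbb Z/2\)-action \(u\mapsto-u\); (2) show \(\iota_\varepsilon(F_\varepsilon(M))\subset\mathcal Z_\varepsilon^{d_\varepsilon+\kappa_0}\) for small \(\varepsilon\), using Proposition~\ref{prop:8} and Theorem~\ref{thm:tobias}; (3) define the two-point barycenter map \(\beta_\varepsilon\) on \(\mathcal Z_\varepsilon^{d_\varepsilon+\kappa_0}\) and prove it is continuous and equivariant, via a concentration-compactness analysis of low-energy functions in \(\mathcal Z_\varepsilon\) showing \(u^\pm\) each concentrate at a single point; (4) prove \(\beta_\varepsilon\circ\iota_\varepsilon\simeq \pi\colon F_\varepsilon(M)\to C(M)\); (5) invoke Dold's transfer to get that \(\pi^*\colon\mathcal H^*(C(M))\to\mathcal H^*(F_\varepsilon(M))\) factors through \(\mathcal H^*(\mathcal Z_\varepsilon^{d_\varepsilon+\kappa_0})\), deduce the cup-length of the latter is \(\geq\mathrm{cupl}\,C(M)\), hence so is its (equivariant) LS category; (6) apply LS theory to conclude \(J_\varepsilon\) has at least \(\mathrm{cupl}\,C(M)\) pairs \(\pm u\) of critical points with \(J_\varepsilon(u)\le d_\varepsilon+\kappa_0\). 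The main obstacle is step (3) together with (4): one needs sharp enough energy estimates to guarantee that \emph{every} function in \(\mathcal Z_\varepsilon\) below level \(d_\varepsilon+\kappa_0\) — not just the solutions or the model functions — has its positive and negative parts each concentrated near a single well-defined point of \(M\), so that the barycenter map is genuinely \(C(M)\)-valued and continuous; the energy gap from Theorem~\ref{thm:tobias} is exactly what rules out a third bump or a ``split'' bump, and making the transfer argument rigorous when the target set \(\mathcal Z_\varepsilon\) has no manifold structure is the delicate topological point inherited from \cite{BCW07}.
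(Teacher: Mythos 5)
Your overall plan matches the paper's strategy — a barycenter ``exit'' map $\theta_\varepsilon\colon\mathcal Z_\varepsilon\cap J_\varepsilon^{d_\varepsilon+\delta_0}\to F(M)$ (whose well-definedness is the content of Lemma~\ref{lem:6} and Proposition~\ref{prop:9}), Dold's transfer to pass cohomological information, a cup-length lower bound, and equivariant LS theory under the free $\mathbb Z/2$-action $u\mapsto -u$. But there is a genuine gap at your step (2), and it infects steps (4) and (5). You assert that $\iota_\varepsilon(F_\varepsilon(M))\subset\mathcal Z_\varepsilon$, justified by ``Lemma~\ref{lem:3}(a) and the definition of $\mathcal A_\varepsilon(\mathcal D_\varepsilon)$ force (after composing with the flow for a short time, or directly)'' this inclusion. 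This is false. Lemma~\ref{lem:3}(a) only tells you that $\iota_\varepsilon(q_1,q_2)\in\mathcal E_\varepsilon$ lies \emph{outside} $\mathcal D_\varepsilon$ at time $t=0$. Membership in $\mathcal Z_\varepsilon=H_g^1(M)\smallsetminus\mathcal A_\varepsilon(\mathcal D_\varepsilon)$ requires that the \emph{entire forward trajectory} $\varphi_\varepsilon(t,\cdot)$ stays out of $\mathcal D_\varepsilon$; the set $\mathcal E_\varepsilon$ is not flow-invariant, and nothing prevents $\iota_\varepsilon(q_1,q_2)$ from flowing into $\mathcal B(\varepsilon,\pm\mathcal P)$ or $J_\varepsilon^0$ at some later time. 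Flowing ``for a short time'' does not help — it changes the starting point but not whether the trajectory eventually enters $\mathcal D_\varepsilon$. This is exactly the ``no natural constraint for sign changing solutions'' obstruction emphasized in the introduction, and it is the reason the problem cannot be reduced to an LS argument on $\mathcal N_\varepsilon$ or $\mathcal E_\varepsilon$.

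Consequently your description of the transfer — ``the transfer $\tau$ associated to the inclusion $\mathcal Z_\varepsilon^{d_\varepsilon+\kappa_0}\hookrightarrow H_g^1(M)$,'' and step (5)'s factorization of $\pi^*$ — mislocates the idea. There is no map $F_\varepsilon(M)\to\mathcal Z_\varepsilon$, and Dold's fixed-point transfer is not a transfer for an inclusion. What the paper actually does (Proposition~\ref{mono}) is construct, over each $(x,y)\in F_\varepsilon(M)$, the two-parameter family $\kappa(\lambda\,\iota_\varepsilon(x,y)^++\mu\,\iota_\varepsilon(x,y)^-)$, $(\lambda,\mu)\in[0,1]^2$, and a continuous function $g_{x,y}\colon[0,1]^2\to\mathbb R^2$ (built from the entrance-time function into $\mathcal D_\varepsilon$ and the ``Nehari defect'' $\gamma$) whose zero set detects exactly which members of this family lie in $\mathcal Z_\varepsilon\cap J_\varepsilon^{d_\varepsilon+\delta_0}$. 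This packages into a $\mathbb Z/2$-equivariant fiber-preserving, compactly fixed map $f\colon F_\varepsilon(M)\times[0,1]^2\to F_\varepsilon(M)\times\mathbb R^2$ over $F_\varepsilon(M)$; Dold's fixed point transfer $\tau_{\widehat f}$ for \emph{this} fiberwise map satisfies $\tau_{\widehat f}\circ\widehat\pi^*=\mathrm{id}$, and composing with $\widehat\iota^*$ (where $\iota$ maps $\mathrm{Fix}(f)$ into $\mathcal Z_\varepsilon\cap J_\varepsilon^{d_\varepsilon+\delta_0}$) gives the homomorphism $\tau_\varepsilon$. The inclusion-induced isomorphism $\mathcal H^*(C(M))\to\mathcal H^*(C_\varepsilon(M))$ then factors through $\mathcal{\check H}((\mathcal Z_\varepsilon\cap J_\varepsilon^{d_\varepsilon+\delta_0})/(\mathbb Z/2))$ (note: the orbit space, and \v Cech cohomology, since $\mathcal Z_\varepsilon$ is not an ENR). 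Without the fiberwise fixed-point formulation you cannot get a map into $\mathcal Z_\varepsilon$ at all, so your homotopy $\beta_\varepsilon\circ\iota_\varepsilon\simeq\pi$ is not available and the argument as written does not close. Finally, a smaller point: you should work at level $d_\varepsilon+\delta_0$ with $\delta_0\in(0,\kappa_0)$ fixed by Proposition~\ref{prop:9}, not at $d_\varepsilon+\kappa_0$; the conclusion $J_\varepsilon(u)\le d_\varepsilon+\kappa_0$ then follows a fortiori.
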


Here $\kappa _{0}$ is as in Theorem \ref{thm:tobias}. We start with some
lemmas.

\begin{lemma}
\label{lem:5}Let $u_{k}\in \mathcal{Z}_{\varepsilon _{k}}\cap J_{\varepsilon
_{k}}^{d_{\varepsilon _{k}}+\delta _{k}}$ where $\varepsilon _{k},\delta
_{k}>0$ are such that $\varepsilon _{k}\rightarrow 0$ and $\delta
_{k}\rightarrow 0$ as $k\rightarrow \infty .$ Then 
\begin{equation*}
\text{\emph{dist}}_{\varepsilon _{k}}(u_{k}^{\pm },\mathcal{N}_{\varepsilon
_{k}})\rightarrow 0\text{\qquad and\qquad }J_{\varepsilon _{k}}(u_{k}^{\pm
})\rightarrow c_{\infty }\text{\qquad as }k\rightarrow \infty .
\end{equation*}
\end{lemma}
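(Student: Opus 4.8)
The plan is to argue by contradiction and compactness, exploiting the three facts we already have in hand: (i) every $u\in\mathcal{Z}_\varepsilon$ is sign changing, so both $u^+$ and $u^-$ are nonzero; (ii) $d_{\varepsilon_k}\to 2c_\infty$ by Proposition \ref{prop:8}; and (iii) the quantitative energy gap of Theorem \ref{thm:tobias}, which rules out that the flow escapes into a sign-changing configuration of the limit problem. First I would record the elementary decomposition $J_{\varepsilon_k}(u_k)=J_{\varepsilon_k}(u_k^+)+J_{\varepsilon_k}(u_k^-)$ (the cross terms vanish) and the analogous identity $\|\nabla_{\varepsilon_k}J_{\varepsilon_k}(u_k)\|_{\varepsilon_k}^2\geq\|\nabla_{\varepsilon_k}J_{\varepsilon_k}(u_k^\pm)\|_{\varepsilon_k}^2$ up to lower-order terms. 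By Ekeland's principle (Lemma \ref{ekeland}) applied with $\delta=\delta_k$, after replacing $u_k$ by a nearby $v_k\in\mathcal{Z}_{\varepsilon_k}$ we may assume in addition that $\|\nabla_{\varepsilon_k}J_{\varepsilon_k}(u_k)\|_{\varepsilon_k}\to 0$; the replacement changes $J_{\varepsilon_k}(u_k^\pm)$ and $\operatorname{dist}_{\varepsilon_k}(u_k^\pm,\mathcal N_{\varepsilon_k})$ by at most $O(\sqrt{\delta_k})\to 0$, so it suffices to prove the conclusion under this extra hypothesis.

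Next I would show $\operatorname{dist}_{\varepsilon_k}(u_k^\pm,\mathcal N_{\varepsilon_k})\to 0$. Since $u_k$ is sign changing, $u_k^\pm\neq 0$; write $t_k^\pm:=t_{\varepsilon_k}(u_k^\pm)$ for the Nehari projection, so $t_k^\pm u_k^\pm\in\mathcal N_{\varepsilon_k}$ and $\operatorname{dist}_{\varepsilon_k}(u_k^\pm,\mathcal N_{\varepsilon_k})\leq|1-t_k^\pm|\,\|u_k^\pm\|_{\varepsilon_k}$. From $\|\nabla_{\varepsilon_k}J_{\varepsilon_k}(u_k)\|_{\varepsilon_k}\to 0$ one gets $J'_{\varepsilon_k}(u_k)u_k^\pm=o(1)\|u_k^\pm\|_{\varepsilon_k}$, i.e. $\|u_k^\pm\|_{\varepsilon_k}^2-|u_k^\pm|_{p,\varepsilon_k}^p=o(1)\|u_k^\pm\|_{\varepsilon_k}$, which by \eqref{eq:4} forces $t_k^\pm\to 1$ provided $\|u_k^\pm\|_{\varepsilon_k}$ stays bounded away from $0$ and $\infty$. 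Boundedness above follows from $J_{\varepsilon_k}(u_k)=\tfrac{p-2}{2p}(\|u_k^+\|_{\varepsilon_k}^2+\|u_k^-\|_{\varepsilon_k}^2)+o(1)$ together with $J_{\varepsilon_k}(u_k)\leq d_{\varepsilon_k}+\delta_k\to 2c_\infty$; boundedness below away from $0$ follows from $u_k^\pm\in\mathcal Z_{\varepsilon_k}$: since $\mathcal Z_{\varepsilon_k}$ avoids $\mathcal B(\varepsilon_k,\pm\mathcal P)$, the inequality \eqref{eq:1} gives a uniform lower bound $\|u_k^\pm\|_{\varepsilon_k}\geq c_0>0$. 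Hence $t_k^\pm\to 1$ and $\operatorname{dist}_{\varepsilon_k}(u_k^\pm,\mathcal N_{\varepsilon_k})\to 0$.

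For the energy statement, set $a_k^\pm:=J_{\varepsilon_k}(u_k^\pm)=\tfrac{p-2}{2p}\|u_k^\pm\|_{\varepsilon_k}^2+o(1)\geq c_{\varepsilon_k}+o(1)$ (the last bound because $t_k^\pm u_k^\pm\in\mathcal N_{\varepsilon_k}$, $t_k^\pm\to1$, and $c_{\varepsilon_k}=\inf_{\mathcal N_{\varepsilon_k}}J_{\varepsilon_k}$), and note $a_k^++a_k^-=J_{\varepsilon_k}(u_k)+o(1)\to 2c_\infty$ while $\liminf a_k^\pm\geq\liminf c_{\varepsilon_k}=c_\infty$ by Lemma \ref{lem:lemma1}. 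These three relations immediately give $a_k^\pm\to c_\infty$. (One should double-check that the Nehari projections $t_k^\pm u_k$ being in $\mathcal N_{\varepsilon_k}$ really does yield $a_k^\pm \ge c_{\varepsilon_k}-o(1)$, using $J_{\varepsilon_k}(t_k^\pm u_k^\pm)=\tfrac{p-2}{2p}\|t_k^\pm u_k^\pm\|_{\varepsilon_k}^2$ and $t_k^\pm\to1$.) The role of Theorem \ref{thm:tobias} here is implicit but important: it guarantees that the minimizing configuration cannot absorb extra energy by developing a sign-changing bubble, which is what pins $d_{\varepsilon_k}$ down to exactly $2c_\infty$ rather than something larger — this was already used in Proposition \ref{prop:8}, so we may simply invoke it.

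The main obstacle is the uniform-in-$k$ control of the norms $\|u_k^\pm\|_{\varepsilon_k}$: one must be careful that the $\varepsilon_k$-dependent constants $S_{\varepsilon_k}$, $c_{\varepsilon_k}$ in \eqref{eq:1} and in the definition of $\mathcal B(\varepsilon_k,\pm\mathcal P)$ converge (they do, by Lemma \ref{lem:lemma1} and $c_\varepsilon=\tfrac{p-2}{2p}S_\varepsilon^{p/(p-2)}$) so that the lower bound $c_0$ is genuinely uniform, and that the $o(1)$ terms coming from $\|\nabla_{\varepsilon_k}J_{\varepsilon_k}(u_k)\|_{\varepsilon_k}\to0$ and from the Ekeland replacement are controlled relative to these fixed-size norms rather than swallowed by a blowing-up quantity. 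Once that bookkeeping is done, everything else is a routine chase through the Nehari identities.
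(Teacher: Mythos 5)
Your proof follows essentially the same route as the paper's: apply Ekeland (Lemma \ref{ekeland}) to pass to an almost-critical sequence, derive the a priori bound on $\|u_k\|_{\varepsilon_k}$ from $\tfrac{p-2}{2p}\|u_k\|^2_{\varepsilon_k}=J_{\varepsilon_k}(u_k)-\tfrac1p J'_{\varepsilon_k}(u_k)u_k$, deduce $t_{\varepsilon_k}(u_k^\pm)\to1$ and hence the Nehari distance estimate, and sandwich $J_{\varepsilon_k}(u_k^\pm)$ between $c_{\varepsilon_k}$ and $J_{\varepsilon_k}(u_k)-c_{\varepsilon_k}$. You are more explicit than the paper about the lower bound $\|u_k^\pm\|_{\varepsilon_k}\geq c_0>0$, obtained from $u_k\notin\mathcal B(\varepsilon_k,\pm\mathcal P)$ via $\operatorname{dist}_{\varepsilon_k}(u_k,\mathcal P)=\|u_k^-\|_{\varepsilon_k}$; this step is genuinely needed to conclude $t_{\varepsilon_k}(u_k^\pm)\to1$ from $\|u_k^\pm\|^2_{\varepsilon_k}-|u_k^\pm|_{p,\varepsilon_k}^p\to0$ and the paper leaves it implicit, so spelling it out is a small improvement. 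Two minor blemishes that do not affect the conclusion: the claimed ``analogous identity'' $\|\nabla_{\varepsilon_k}J_{\varepsilon_k}(u_k)\|^2_{\varepsilon_k}\geq\|\nabla_{\varepsilon_k}J_{\varepsilon_k}(u_k^\pm)\|^2_{\varepsilon_k}$ is neither an identity nor valid in general (the gradient $\mathrm{Id}-K_\varepsilon$ does not split along $u\mapsto(u^+,u^-)$ the way $J_\varepsilon$ does), but you never use it; and the asserted $O(\sqrt{\delta_k})$ change in $J_{\varepsilon_k}(u_k^\pm)$ and $\operatorname{dist}_{\varepsilon_k}(u_k^\pm,\mathcal N_{\varepsilon_k})$ under the Ekeland replacement tacitly treats $u\mapsto u^\pm$ as Lipschitz on $H^1_g(M)$, which it is not --- the rate-free convergence to $0$ is all that is needed and all that the paper's ``without loss of generality'' implicitly asserts.
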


\begin{proof}
By Lemma \ref{ekeland} we may assume without loss of generality that $\Vert
\nabla _{\varepsilon _{k}}J_{\varepsilon _{k}}(u_{k})\Vert _{\varepsilon
_{k}}\rightarrow 0.$ Since%
\begin{equation*}
\frac{p-2}{2p}\left\Vert u_{k}\right\Vert _{\varepsilon
_{k}}^{2}=J_{\varepsilon _{k}}(u_{k})-\frac{1}{p}J_{\varepsilon
_{k}}^{\prime }(u_{k})u_{k}\leq J_{\varepsilon _{k}}(u_{k})+\Vert \nabla
_{\varepsilon _{k}}J_{\varepsilon _{k}}(u_{k})\Vert _{\varepsilon
_{k}}\left\Vert u_{k}\right\Vert _{\varepsilon _{k}},
\end{equation*}%
we have that $\left\Vert u_{k}\right\Vert _{\varepsilon _{k}}$ is uniformly
bounded. Hence,%
\begin{equation*}
\left\vert J_{\varepsilon _{k}}^{\prime }(u_{k})u_{k}^{\pm }\right\vert
=\left\vert \left\Vert u_{k}^{\pm }\right\Vert _{\varepsilon
_{k}}^{2}-\left\vert u_{k}^{\pm }\right\vert _{p,\varepsilon
_{k}}^{p}\right\vert \rightarrow 0.
\end{equation*}%
Consequently, the number $t_{\varepsilon _{k}}(u_{k}^{\pm })$ defined by (%
\ref{eq:4}) tends to $1$ and, therefore, 
\begin{equation*}
\text{dist}_{\varepsilon _{k}}(u_{k}^{\pm },\mathcal{N}_{\varepsilon
_{k}})\leq \left\Vert u_{k}^{\pm }-t_{\varepsilon _{k}}(u_{k}^{\pm
})u_{k}^{\pm }\right\Vert _{\varepsilon _{k}}\rightarrow 0.
\end{equation*}%
This, together with Lemma \ref{lem:lemma1} and Proposition \ref{prop:8},
implies that%
\begin{eqnarray*}
2c_{\infty } &\leq &\lim_{k\rightarrow \infty }J_{\varepsilon
_{k}}(t_{\varepsilon _{k}}(u_{k}^{+})u_{k}^{+})+\lim_{k\rightarrow \infty
}J_{\varepsilon _{k}}(t_{\varepsilon _{k}}(u_{k}^{-})u_{k}^{-}) \\
&=&\lim_{k\rightarrow \infty }J_{\varepsilon
_{k}}(u_{k}^{+})+\lim_{k\rightarrow \infty }J_{\varepsilon
_{k}}(u_{k}^{-})=\lim_{k\rightarrow \infty }J_{\varepsilon
_{k}}(u_{k})=2c_{\infty }.
\end{eqnarray*}%
Therefore, $J_{\varepsilon _{k}}(u_{k}^{\pm })\rightarrow c_{\infty }.$
\end{proof}

\begin{lemma}
\label{lem:6}For each $\eta \in (0,1)$ there exist $\delta _{0}>0$ and $%
\varepsilon _{0}>0$ such that, for every $u\in \mathcal{Z}_{\varepsilon
}\cap J_{\varepsilon }^{d_{\varepsilon }+\delta }$ with $\varepsilon \in
(0,\varepsilon _{0})$ and $\delta \in (0,\delta _{0})$, we can find $%
q^{(1)}=q^{(1)}(u)$ and $q^{(2)}=q^{(2)}(u)$ in $M$ such that 
\begin{equation}
\frac{1}{\varepsilon ^{n}}\int_{B_{g}(q^{(1)},\varepsilon R)}|u^{+}|^{p}d\mu
_{g}>(1-\eta ){\frac{2p}{p-2}}c_{\infty }  \label{eq:3.14}
\end{equation}%
\begin{equation}
\frac{1}{\varepsilon ^{n}}\int_{B_{g}(q^{(2)},\varepsilon R)}|u^{-}|^{p}d\mu
_{g}>(1-\eta ){\frac{2p}{p-2}}c_{\infty }.  \label{eq:3.15}
\end{equation}
\end{lemma}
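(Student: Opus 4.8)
The plan is to argue by contradiction, using a concentration-compactness type analysis combined with the quantitative gap provided by Theorem~\ref{thm:tobias}. Suppose the statement fails. Then there are sequences $\varepsilon_k\to 0$, $\delta_k\to 0$, and $u_k\in\mathcal{Z}_{\varepsilon_k}\cap J_{\varepsilon_k}^{d_{\varepsilon_k}+\delta_k}$ such that for every $q\in M$ we have $\frac{1}{\varepsilon_k^n}\int_{B_g(q,\varepsilon_k R)}|u_k^+|^p\,d\mu_g\le(1-\eta)\frac{2p}{p-2}c_\infty$ (or the analogous statement fails for $u_k^-$; the two cases are symmetric, so treat the $+$ case). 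By Lemma~\ref{lem:5} we know $\mathrm{dist}_{\varepsilon_k}(u_k^\pm,\mathcal{N}_{\varepsilon_k})\to 0$ and $J_{\varepsilon_k}(u_k^\pm)\to c_\infty$; in particular, after multiplying by $t_{\varepsilon_k}(u_k^+)\to 1$, the rescaled positive parts $w_k:=t_{\varepsilon_k}(u_k^+)u_k^+$ lie on $\mathcal{N}_{\varepsilon_k}$, satisfy $\|w_k\|_{\varepsilon_k}^2=|w_k|_{p,\varepsilon_k}^p=\frac{2p}{p-2}c_\infty+o(1)$, and $\|\nabla_{\varepsilon_k}J_{\varepsilon_k}(w_k)\|_{\varepsilon_k}\to 0$.

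Next I would run a local rescaling/covering argument. Cover $M$ by finitely many geodesic balls $B_g(q_i,R)$ on which normal coordinates are diffeomorphisms, with a subordinate partition of unity, so that the $\varepsilon_k$-rescaled pieces of $w_k$ can be compared with functions on $\mathbb{R}^n$ via $\exp_{q_i}$. Using a standard lemma of Lions type adapted to this Riemannian $\varepsilon$-setting (as in \cite{BBM07,BP05,CCN97}), the near-critical sequence $w_k$ on $\mathcal{N}_{\varepsilon_k}$ with energy $\to c_\infty$ must, after passing to a subsequence, concentrate: there is a sequence of points $\xi_k\in M$ such that the rescaled functions $v_k(z):=w_k(\exp_{\xi_k}(\varepsilon_k z))$ converge (strongly in $H^1_{loc}$, hence in $L^p_{loc}$) to a nonzero nonnegative solution $v$ of the limit problem \eqref{eq:a1} with $J_\infty(v)\le c_\infty$; since $c_\infty$ is the least energy, $v=U$ up to translation. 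Then for any $T>0$, $\frac{1}{\varepsilon_k^n}\int_{B_g(\xi_k,\varepsilon_k T)}|w_k|^p\,d\mu_g\to\int_{B(0,T)}U^p\,dx\to\frac{2p}{p-2}c_\infty$ as $T\to\infty$. Choosing $T$ large enough that $\int_{B(0,T)}U^p\,dx>(1-\eta/2)\frac{2p}{p-2}c_\infty$, and then $k$ large, gives $\frac{1}{\varepsilon_k^n}\int_{B_g(\xi_k,\varepsilon_k T)}|w_k|^p>(1-\eta/2)\frac{2p}{p-2}c_\infty$. Covering $B_g(\xi_k,\varepsilon_k T)$ by a bounded number (depending only on $T/R$ and $n$) of balls of radius $\varepsilon_k R$ and using $w_k=t_{\varepsilon_k}(u_k^+)u_k^+$ with $t_{\varepsilon_k}(u_k^+)\to 1$, one of these balls, centered at some $q^{(1)}=q^{(1)}(u_k)$, must carry $L^p$-mass of $u_k^+$ exceeding $(1-\eta)\frac{2p}{p-2}c_\infty$ for $k$ large — contradicting our assumption. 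This proves \eqref{eq:3.14}, and \eqref{eq:3.15} follows by the identical argument applied to $u_k^-$.

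I expect the main obstacle to be the concentration step: establishing that the near-critical, near-Nehari sequence $w_k$ does not "spread out" or split into multiple bumps, i.e. that its $L^p$-mass genuinely accumulates in a single shrinking ball. The energy bound $J_{\varepsilon_k}(w_k)\to c_\infty$ is exactly the least-energy level, which rules out dichotomy (a split would force energy $\ge 2c_\infty$) and vanishing (which would force $|w_k|_{p,\varepsilon_k}\to 0$, contradicting $|w_k|_{p,\varepsilon_k}^p\to\frac{2p}{p-2}c_\infty>0$); making this rigorous in the Riemannian $\varepsilon$-scaled framework requires care with the metric coefficients $\sqrt{\det g}$ in normal coordinates tending to $1$ as $\varepsilon_k\to 0$, and with the cutoff $\chi_R$, but all the needed estimates are already available in \cite{BBM07,BP05}. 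A secondary technical point is the bookkeeping in passing from the single concentration ball of radius $\varepsilon_k T$ back to a ball of radius $\varepsilon_k R$: one needs the number of covering balls to be independent of $k$ (true, by the uniform diffeomorphism radius $R$ and Besicovitch-type covering in normal coordinates) and must absorb the factor $t_{\varepsilon_k}(u_k^+)^p\to 1$ into the gap between $1-\eta/2$ and $1-\eta$, which is where the freedom in choosing $T$ and then $k$ is spent.
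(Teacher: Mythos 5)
The overall plan (contradiction, concentration--compactness via local rescaling, identification of the blow-up profile as $U$) does match the paper's strategy, but the execution has gaps; the last one is fatal as written.

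First, you claim that $w_{k}:=t_{\varepsilon_{k}}(u_{k}^{+})u_{k}^{+}$ satisfies $\Vert \nabla_{\varepsilon_{k}}J_{\varepsilon_{k}}(w_{k})\Vert_{\varepsilon_{k}}\to 0$. This does not follow: Ekeland (Lemma~\ref{ekeland}) makes $u_{k}$ nearly critical, not $u_{k}^{+}$, and the positive part of a near-solution is not near-critical (the gradient $\nabla_{\varepsilon}J_{\varepsilon}(u^{+})=u^{+}-K_{\varepsilon}(u^{+})$ picks up a singular contribution along the nodal set). Relatedly, the limit $v$ of the rescaled $u_{k}^{+}$ is not directly a solution of \eqref{eq:a1}. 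The paper avoids this by rescaling the whole function $u_{k}$ (which \emph{does} solve the PDE), obtaining a limit solution $w$ whose positive part $w^{+}$ is the $L^{p}_{\mathrm{loc}}$-limit of the rescaled $u_{k}^{+}$. But now $w$ could a priori change sign, in which case $w^{+}$ is not a ground state; ruling this out is exactly where the paper invokes Theorem~\ref{thm:tobias} (Weth's energy gap) together with the bound $J_{\infty}(w)\le 2c_{\infty}$. You never mention this step, and ``since $c_{\infty}$ is the least energy, $v=U$'' does not close the gap: it presupposes both that $v$ solves the PDE and that $v\ge 0$ is automatic, neither of which is immediate.

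The decisive error, however, is the final covering step. Suppose $B_{g}(\xi_{k},\varepsilon_{k}T)$ carries an $|u_{k}^{+}|^{p}$-mass exceeding $(1-\eta/2)\tfrac{2p}{p-2}c_{\infty}\varepsilon_{k}^{n}$, and cover it by $m$ balls of radius $\varepsilon_{k}R$ (with $m$ depending only on $T/R$ and $n$). Then all you can conclude is that some ball of radius $\varepsilon_{k}R$ carries at least $\tfrac{1}{m}(1-\eta/2)\tfrac{2p}{p-2}c_{\infty}\varepsilon_{k}^{n}$. For $m\ge 2$ and $\eta$ small this is far below the claimed threshold $(1-\eta)\tfrac{2p}{p-2}c_{\infty}\varepsilon_{k}^{n}$: covering spreads the mass, it does not put almost all of it in one sub-ball. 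There is no slack between $1-\eta/2$ and $1-\eta$ to absorb a factor $1/m$. The paper's proof instead derives its contradiction directly from the chosen ball of radius $\varepsilon_{k}\tau$, where $\tau$ is picked so that $\int_{B(0,\tau)}|U|^{p}$ exceeds $\tfrac{1-\eta}{1-a}\tfrac{2p}{p-2}c_{\infty}$, and compares against the contradiction hypothesis~\eqref{eq:3.16} at that same scale. You should adopt this comparison (at the scale $\varepsilon_{k}\tau$, not after a covering by $R$-balls) for the argument to go through.
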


\begin{proof}
We prove (\ref{eq:3.14}). Arguing by contradiction we assume there exist $%
\eta \in (0,1)$, $\varepsilon _{k},\delta _{k}>0$ and $u_{k}\in \mathcal{Z}%
_{\varepsilon _{k}}\cap J_{\varepsilon _{k}}^{d_{\varepsilon _{k}}+\delta
_{k}}$ such that $\varepsilon _{k}\rightarrow 0$ and $\delta _{k}\rightarrow
0$ as $k\rightarrow \infty $ and 
\begin{equation}
\frac{1}{\varepsilon _{k}^{n}}\int_{B_{g}(q,\varepsilon
_{k}R)}|u_{k}^{+}|^{p}d\mu _{g}\leq (1-\eta ){\frac{2p}{p-2}}c_{\infty }%
\text{\qquad for all }q\in M.  \label{eq:3.16}
\end{equation}%
Then, by Lemma \ref{lem:5},%
\begin{equation}
{\lim_{k\rightarrow +\infty }\Vert u_{k}^{\pm }\Vert _{\varepsilon
_{k}}^{2}=\lim_{k\rightarrow +\infty }|u_{k}^{\pm }|_{p,\varepsilon
_{k}}^{p}=\frac{2p}{p-2}c_{\infty }.}  \label{eq:3.19}
\end{equation}%
We divide the proof into several steps.

\textbf{Step 1.} There exist $\vartheta >0$, $T>0,$ $k_{0}\in \mathbb{N}$
and, for each $k>k_{0},$ a point $q_{k}\in M$ such that 
\begin{equation}
\frac{1}{\varepsilon _{k}^{n}}\int_{B_{g}(q_{k},T\varepsilon
_{k})}|u_{k}^{+}|^{p}d\mu _{g}>\vartheta \text{\qquad for all }k>k_{0}.
\label{eq:3.20}
\end{equation}

\begin{proof}[Proof of Step 1]
For each $k$ large enough we choose a finite partition $\{M_{j}^{k}:j\in
\Lambda _{k}\}$ of $M$ such that $M_{j}^{k}$ is closed, $M_{i}^{k}\cap
M_{j}^{k}\subset \partial M_{i}^{k}\cap \partial M_{j}^{k}$ for any $i\neq j$%
, and there exist $T>1$ and points $q_{j}^{k}\in M_{j}^{k}$ satisfying 
\begin{equation*}
B_{g}(q_{j}^{k},\varepsilon _{k})\subset M_{j}^{k}\subset
B_{g}(q_{j}^{k},T\varepsilon _{k})\text{\qquad for all }j\in \Lambda _{k}
\end{equation*}%
and such that each point $x\in M$ is contained in at most $m$ balls $%
B_{g}(q_{j}^{k},T\varepsilon _{k})$, where the number $m$ does not depend on 
$k.$ We denote by $u_{k,j}^{+}$ the restriction of $u_{k}^{+}$ to the set $%
M_{j}^{k}$. Then 
\begin{equation}
\left\vert u_{k}^{+}\right\vert _{p,\varepsilon
_{k}}^{p}=\tsum_{j}\left\vert u_{k,j}^{+}\right\vert _{p,\varepsilon
_{k}}^{p}\leq \max_{j}\left\vert u_{k,j}^{+}\right\vert _{p,\varepsilon
_{k}}^{p-2}\tsum_{j}\left\vert u_{k,j}^{+}\right\vert _{p,\varepsilon
_{k}}^{2}.  \label{eq:3.21}
\end{equation}%
We take a smooth cut-off function $\chi _{k}$ such that $\chi _{k}(t)\equiv
1 $ if $0<t<\varepsilon _{k}$ and $\chi _{\varepsilon }(t)\equiv 0$ if $%
t>T\varepsilon _{k}$ and $\left\vert \chi _{\varepsilon }^{\prime
}\right\vert \leq \frac{1}{\alpha \varepsilon _{k}}$. Then 
\begin{equation*}
\tilde{u}_{k,j}(x):=u_{k}^{+}(x)\chi _{k}(|x-q_{j}^{k}|)\in H_{g}^{1}(M).
\end{equation*}%
Hence, 
\begin{eqnarray*}
\left\vert u_{k,j}^{+}\right\vert _{p,\varepsilon _{k}}^{2} &\leq
&\left\vert \tilde{u}_{k,j}\right\vert _{p,\varepsilon _{k}}^{2}\leq c\Vert 
\tilde{u}_{k,j}\Vert _{\varepsilon _{k}}^{2} \\
&=&c\Vert u_{k,j}^{+}\Vert _{\varepsilon _{k}}^{2}+c\Vert \tilde{u}%
_{k,j}\mid _{B_{g}(q_{j}^{k},T\varepsilon _{k})\smallsetminus
M_{j}^{k}}\Vert _{\varepsilon _{k}}^{2} \\
&\leq &c\Vert u_{k,j}^{+}\Vert _{\varepsilon _{k}}^{2}+c\left( \frac{2}{%
\alpha ^{2}}+1\right) \Vert u_{k}^{+}\mid _{B_{g}(q_{j}^{k},T\varepsilon
_{k})\smallsetminus M_{j}^{k}}\Vert _{\varepsilon _{k}}^{2}
\end{eqnarray*}%
and, therefore, 
\begin{equation}
\tsum_{j}\left\vert u_{k,j}^{+}\right\vert _{p,\varepsilon _{k}}^{2}\leq
c\Vert u_{k}^{+}\Vert _{\varepsilon _{k}}^{2}+c\left( \frac{2}{\alpha ^{2}}%
+1\right) m\Vert u_{k}^{+}\Vert _{\varepsilon _{k}}^{2}.  \label{eq:3.22}
\end{equation}%
Combining (\ref{eq:3.21}) and (\ref{eq:3.22}) we obtain 
\begin{equation}
\left\vert u_{k}^{+}\right\vert _{p,\varepsilon _{k}}^{p}\leq c\Vert
u_{k}^{+}\Vert _{\varepsilon _{k}}^{2}\max_{j}\left\vert
u_{k,j}^{+}\right\vert _{p,\varepsilon _{k}}^{p-2}.  \label{eq:3.23}
\end{equation}%
which together with (\ref{eq:3.19}) implies that there exist $\vartheta >0$
and, for each $k$ large enough, a $j\in \Lambda _{k}$ such that%
\begin{equation*}
\vartheta <\left\vert u_{k,j}^{+}\right\vert _{p,\varepsilon _{k}}^{p}=\frac{%
1}{\varepsilon _{k}^{n}}\int_{M_{j}^{k}}|u_{k}^{+}|^{p}d\mu _{g}\leq \frac{1%
}{\varepsilon _{k}^{n}}\int_{B_{g}(q_{j}^{k},T\varepsilon
_{k})}|u_{k}^{+}|^{p}d\mu _{g}.
\end{equation*}%
This proves Step 1.
\end{proof}

Since $M$ is compact, we may assume that the sequence $(q_{k})$ converges to
a point $q\in M$. We define $w_{k}\in H^{1}(\mathbb{R}^{n})$ by%
\begin{equation}
{w_{k}(z):=\chi \left( \varepsilon _{k}|z|\right) u_{k}(\exp
_{q_{k}}(\varepsilon _{k}z))},  \label{eq:3.24}
\end{equation}%
where $\chi $ is a smooth cut-off function such that $\chi (t)\equiv 1$ for $%
0\leq t\leq \frac{R}{2}$, $\chi (t)\equiv 0$ for $t\geq R$ and $\left\vert
\chi ^{\prime }(t)\right\vert \leq \frac{2}{R}$. By (\ref{eq:3.19}) the
sequence $(\Vert u_{k}\Vert _{\varepsilon _{k}})$ is bounded. Therefore, $%
(w_{k})$ is bounded in $H^{1}(\mathbb{R}^{n})$ (see Lemma 5.6 in \cite{BBM07}%
). So, up to a subsequence, $w_{k}\rightharpoonup w$ weakly in $H^{1}(%
\mathbb{R}^{n}),$ $w_{k}\rightarrow w$ a.e. in $\mathbb{R}^{n}$\ and $%
w_{k}\rightarrow w$ strongly in $L_{loc}^{p}(\mathbb{R}^{n})$. The following
statement holds true.

\textbf{Step 2.} $w\in H^{1}(\mathbb{R}^{n})$ is a weak solution of the
equation $-\Delta w+w=|w|^{p-2}w$ and $w^{+}\not\equiv 0$.

\begin{proof}[Proof of Step 2]
Following the argument in the proof of Lemma 5.7 in \cite{BBM07} one shows
that $w$ is a weak solution of $-\Delta w+w=|w|^{p-2}w.$ Next we show that $%
w^{+}\not\equiv 0$. By Step 1 we have that, for $k$ large, 
\begin{eqnarray*}
|w_{k}^{+}|_{L^{p}(B(0,T))}^{p} &=&\frac{1}{\varepsilon _{k}^{n}}%
\int_{B(0,\varepsilon _{k}T)}|u_{k}^{+}(\exp _{q_{k}}(y))|^{p}dy \\
&\geq &\frac{1}{2\varepsilon _{k}^{n}}\int_{B(0,\varepsilon
_{k}T)}|u_{k}^{+}(\exp _{q_{k}}(y))|^{p}|g_{q_{k}}(y)|^{1/2}dy \\
&=&\frac{1}{2\varepsilon _{k}^{n}}\int_{B_{g}(q_{k},\varepsilon
_{k}T)}|u_{k}^{+}(x)|^{p}d\mu _{g}\geq \frac{\vartheta }{2}.
\end{eqnarray*}%
Here we use the fact that $\lim_{k\rightarrow +\infty
}|g_{q_{k}}(y)|^{1/2}=|g_{q}(0)|^{1/2}=1$ because $q_{k}\rightarrow q$ and $%
\left\vert y\right\vert \leq \varepsilon _{k}T$, hence $|g_{q_{k}}(y)|^{1/2}%
\geq \frac{1}{2}$ for $k$ large. Since $w_{k}\rightarrow w$ strongly in $%
L_{loc}^{p}(\mathbb{R}^{n}),$ passing to the limit we obtain that 
\begin{equation*}
|w^{+}|_{L^{p}(B(0,T))}^{p}=\lim_{k\rightarrow +\infty
}|w_{k}^{+}|_{L^{p}(B(0,T))}^{p}\geq \frac{\vartheta }{2}>0.
\end{equation*}%
This proves Step 2.
\end{proof}

\textbf{Step 3.} $J_{\infty }(w)=c_{\infty }$ and $w>0.$

\begin{proof}[Proof of Step 3]
Fix $\tau >0$. Since $|g_{q_{k}}(\varepsilon _{k}z)|$ converges to $%
|g_{q}(0)|=1$ then, for any $a\in (0,1),$ one has that $|g_{q_{k}}(%
\varepsilon _{k}z)|^{-1/2}\leq (1-a)^{-1}$ for $k$ large enough and $z\in
B(0,\tau )$. Therefore, for $k$ large enough we have 
\begin{eqnarray}
\int_{B(0,\tau )}|w_{k}(z)|^{p}dz &=&\int_{B(0,\tau )}\chi ^{p}({\varepsilon
_{k}}z)|u_{k}(\exp _{q_{k}}(\varepsilon _{k}z))|^{p}dz  \notag \\
&\leq &\left( \frac{1}{1-a}\right) \frac{1}{\varepsilon _{k}^{n}}%
\int_{B(0,\tau \varepsilon _{k})}|u_{k}(\exp
_{q_{k}}(y))|^{p}|g_{q_{k}}(y)|^{\frac{1}{2}}dy  \notag \\
&\leq &\left( \frac{1}{1-a}\right) \frac{1}{\varepsilon _{k}^{n}}%
\int_{B_{g}(q_{k},\tau \varepsilon _{k})}|u_{k}(x)|^{p}d\mu _{g}\leq \frac{1%
}{1-a}|u_{k}|_{p,\varepsilon _{k}}^{p}.  \label{eq:3.25}
\end{eqnarray}%
Then, by (\ref{eq:3.19}), we have 
\begin{equation*}
\int_{B(0,\tau )}|w(z)|^{p}dz=\lim_{k\rightarrow +\infty }\int_{B(0,\tau
)}|w_{k}(z)|^{p}dz\leq \frac{2p}{p-2}2c_{\infty }.
\end{equation*}%
Hence, 
\begin{equation*}
\int_{\mathbb{R}^{n}}|w(z)|^{p}dz=\lim_{\tau \rightarrow +\infty
}\int_{B(0,\tau )}|w_{k}(z)|^{p}dz\leq \frac{2p}{p-2}2c_{\infty }.
\end{equation*}%
It follows that $J_{\infty }(w)=\frac{p-2}{2p}|w|_{L^{p}(\mathbb{R}%
^{n})}^{p}\leq 2c_{\infty }$. Theorem \ref{thm:tobias} implies that $w>0$
and $J_{\infty }(w)=c_{\infty }.$ This proves Step 3.
\end{proof}

We are now ready to prove Lemma \ref{lem:6}.

Fix $a\in (0,\eta ).$ Since $w_{k}^{+}$ converges to $w=w^{+}$ in $%
L_{loc}^{p}(\mathbb{R}^{n})$ and $J_{\infty }(w)=c_{\infty }$, there exists $%
\tau >0$ such that, for $k$ large enough, 
\begin{equation*}
\left( \frac{1-\eta }{1-a}\right) \frac{2p}{p-2}c_{\infty }<\int_{B(0,\tau
)}|w_{k}^{+}(z)|^{p}dz.
\end{equation*}%
On the other hand, arguing as in Step 3 and using (\ref{eq:3.16})\ we have
that, for $k$ large enough, 
\begin{eqnarray*}
\int_{B(0,\tau )}|w_{k}^{+}(z)|^{p}dz &\leq &\left( \frac{1}{1-a}\right) 
\frac{1}{\varepsilon _{k}^{n}}\int_{B(0,\tau \varepsilon
_{k})}|u_{k}^{+}(\exp _{q_{k}}(y))|^{p}|g_{q_{k}}(y)|^{\frac{1}{2}}dy \\
&=&\left( \frac{1}{1-a}\right) \frac{1}{\varepsilon _{k}^{n}}%
\int_{B_{g}(q_{k},\varepsilon _{k}\tau )}|u_{k}^{+}(x)|^{p}d\mu _{g}\leq
\left( \frac{1-\eta }{1-a}\right) \frac{2p}{p-2}c_{\infty }.
\end{eqnarray*}%
This is a contradiction.

The proof of (\ref{eq:3.15}) is similar. This concludes the proof of Lemma %
\ref{lem:6}.
\end{proof}

By Nash's embedding theorem \cite{N56} we may assume that $M$ is
isometrically embedded in some euclidean space $\mathbb{R}^{N}.$ We fix $r>0$
such that $V_{r}:=\{x\in \mathbb{R}^{N}:$ dist$(x,M)\leq r\}$ is a tubular
neighborhood of $M.$ For $x\in V_{r}$ let $\pi (x)\in M$ be the unique point
in $M$ such that $\left\vert x-\pi (x)\right\vert =$ dist$(x,M)\leq r.$ The
map $\pi :V_{r}\rightarrow M$ is smooth and it is the normal disk bundle of
the embedding $M\hookrightarrow \mathbb{R}^{N}.$ We consider the map $\beta :%
\mathcal{N}_{\varepsilon }\rightarrow \mathbb{R}^{N}$ given by%
\begin{equation*}
\beta (u):=\frac{\int_{M}x\left\vert u(x)\right\vert ^{p}d\mu _{g}}{%
\int_{M}\left\vert u(x)\right\vert ^{p}d\mu _{g}}.
\end{equation*}%
If $\beta (u)\in V_{r}$ we set%
\begin{equation*}
\beta _{M}(u):=\pi (\beta (u)).
\end{equation*}%
The following statement holds true.

\begin{proposition}
\label{prop:9}There exist $\delta _{0}>0$ and $\varepsilon _{0}>0$ such
that, for every $u\in \mathcal{Z}_{\varepsilon }\cap J_{\varepsilon
}^{d_{\varepsilon }+\delta }$ with $\varepsilon \in (0,\varepsilon _{0})$
and $\delta \in (0,\delta _{0}]$,%
\begin{equation}
\beta (u^{+})\in V_{r},\text{\qquad }\beta (u^{-})\in V_{r}\text{\qquad
and\qquad }\beta _{M}(u^{+})\neq \beta _{M}(u^{-}).  \label{beta}
\end{equation}
\end{proposition}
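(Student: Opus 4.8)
The plan is to concentrate mass. By Lemma~\ref{lem:6}, applied with a parameter $\eta\in(0,1)$ to be fixed, for $\varepsilon$ and $\delta$ small there are points $q^{(1)}=q^{(1)}(u)$ and $q^{(2)}=q^{(2)}(u)$ such that the $L^p$-mass of $u^+$ (resp.\ $u^-$) on the ball $B_g(q^{(1)},\varepsilon R)$ (resp.\ $B_g(q^{(2)},\varepsilon R)$) is at least $(1-\eta)\tfrac{2p}{p-2}c_\infty$. Since $u\in\mathcal Z_\varepsilon\cap J_\varepsilon^{d_\varepsilon+\delta}$, Lemma~\ref{lem:5} gives $|u^\pm|_{p,\varepsilon}^p\to\tfrac{2p}{p-2}c_\infty$; hence, enlarging $\varepsilon_0$ if necessary, the complementary mass $\frac{1}{\varepsilon^n}\int_{M\smallsetminus B_g(q^{(i)},\varepsilon R)}|u^\pm|^p\,d\mu_g$ is smaller than $2\eta\tfrac{2p}{p-2}c_\infty$. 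First I would use this to locate $\beta(u^\pm)$: writing $\beta(u^+)$ as a convex combination of the barycenter of the mass inside $B_g(q^{(1)},\varepsilon R)$ and that of the mass outside, and noting that the first barycenter lies within distance $\varepsilon R$ of $q^{(1)}\in M$ while the second lies in the compact set $\mathrm{conv}(M)$, we get that $\mathrm{dist}(\beta(u^+),M)\le \varepsilon R + C\eta$ for a constant $C$ depending only on $M$ (diameter of $\mathrm{conv}(M)$ and the mass fractions). Choosing $\eta$ small enough that $C\eta<r/2$ and then $\varepsilon_0$ with $\varepsilon_0 R<r/2$, we obtain $\beta(u^+),\beta(u^-)\in V_r$, so $\beta_M(u^\pm)=\pi(\beta(u^\pm))$ is well defined; moreover $|\beta(u^+)-q^{(1)}|\le \varepsilon R + C\eta \le r$ and similarly for $\beta(u^-)$, so $\beta_M(u^+)$ lies in $B_g(q^{(1)},\rho)$ and $\beta_M(u^-)$ in $B_g(q^{(2)},\rho)$ for some $\rho=\rho(\eta,\varepsilon)$ that can be made as small as we like.

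Next I would show $q^{(1)}$ and $q^{(2)}$ are far apart, which forces $\beta_M(u^+)\ne\beta_M(u^-)$. Suppose not: assume along sequences $\varepsilon_k\to0$, $\delta_k\to0$, $u_k\in\mathcal Z_{\varepsilon_k}\cap J_{\varepsilon_k}^{d_{\varepsilon_k}+\delta_k}$ that $\mathrm{dist}_g(q^{(1)}_k,q^{(2)}_k)\to 0$ (or merely stays $\le L\varepsilon_k$ for some large $L$, which is the scale that matters). Then, after passing to a subsequence and rescaling via the exponential map at $q^{(1)}_k$ exactly as in the proof of Lemma~\ref{lem:6} (equation~\eqref{eq:3.24}), the rescaled functions $w_k$ converge weakly in $H^1(\mathbb R^n)$, and strongly in $L^p_{loc}$, to a weak solution $w$ of $-\Delta w+w=|w|^{p-2}w$. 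The key point: by~\eqref{eq:3.14}--\eqref{eq:3.15} the translates of $u_k^+$ and $u_k^-$ both carry almost the full mass $\tfrac{2p}{p-2}c_\infty$ near the origin (after rescaling), so $w^+\not\equiv 0$ and $w^-\not\equiv0$, i.e.\ $w$ is a \emph{sign changing} solution of the limit problem with $\int_{\mathbb R^n}|w|^p\le 2\tfrac{2p}{p-2}c_\infty$, whence $J_\infty(w)=\tfrac{p-2}{2p}\int_{\mathbb R^n}|w|^p\le 2c_\infty$. This contradicts Theorem~\ref{thm:tobias}, which says every sign changing solution satisfies $J_\infty(w)>2c_\infty+\kappa_0>2c_\infty$. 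Therefore there exist $\rho_0>0$ and $\varepsilon_0>0$ so that $\mathrm{dist}_g(q^{(1)}(u),q^{(2)}(u))\ge \rho_0$ for all admissible $u$.

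Finally I would combine the two pieces. With $\eta$ and then $\varepsilon_0,\delta_0$ chosen so that $\beta_M(u^+)\in B_g(q^{(1)},\rho)$ and $\beta_M(u^-)\in B_g(q^{(2)},\rho)$ with $2\rho<\rho_0$, the triangle inequality gives $\mathrm{dist}_g(\beta_M(u^+),\beta_M(u^-))\ge \rho_0-2\rho>0$, hence $\beta_M(u^+)\ne\beta_M(u^-)$, which together with $\beta(u^\pm)\in V_r$ is exactly~\eqref{beta}. The main obstacle is the separation step (the second paragraph): one must carry out the blow-up analysis carefully enough to guarantee that \emph{both} signs survive in the limit — this is where the hypotheses $u\in\mathcal Z_{\varepsilon}$ (so $u$ genuinely changes sign, and $u^\pm$ are both close to $\mathcal N_\varepsilon$) and the near-optimal energy level $d_\varepsilon+\delta\to 2c_\infty$ are essential, and where Weth's strict energy gap (Theorem~\ref{thm:tobias}) does the decisive work. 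The barycenter-in-$V_r$ step is essentially bookkeeping with convex combinations once the mass concentration from Lemma~\ref{lem:6} is in hand.
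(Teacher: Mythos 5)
Your first paragraph (locating $\beta(u^\pm)$ in $V_r$ near $q^{(1)},q^{(2)}$ via Lemma~\ref{lem:6} and a convex-combination estimate) matches the paper's step and is correct. The gap is in the separation step. Your contradiction argument --- rescale at $q^{(1)}_k$, obtain a limit $w$ with $w^+\not\equiv0$ and $w^-\not\equiv0$, contradict Theorem~\ref{thm:tobias} --- only produces a sign changing limit when $\mathrm{dist}_g(q^{(1)}_k,q^{(2)}_k)=O(\varepsilon_k)$. In the intermediate regime $\varepsilon_k\ll\mathrm{dist}_g(q^{(1)}_k,q^{(2)}_k)\to0$ the negative bump escapes to infinity in the rescaled coordinates, $w$ is one-signed, and there is no contradiction with Weth's energy gap. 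Consequently the uniform lower bound $\mathrm{dist}_g(q^{(1)}(u),q^{(2)}(u))\ge\rho_0$ that you invoke in the final paragraph is not established; at best you rule out separations of order $\varepsilon$. Without it the bookkeeping does not close: the localization radius of $\beta_M(u^\pm)$ around $q^{(1)},q^{(2)}$ is $\sim\varepsilon R+C\eta$, and to make that smaller than an $O(\varepsilon)$ separation you would need $\eta\lesssim\varepsilon$, which Lemma~\ref{lem:6} (with $\eta$ fixed before $\varepsilon_0$) does not provide.

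The paper sidesteps the separation question entirely by negating the conclusion rather than a separation statement. Along a contradicting sequence one assumes $\beta_M(u_k^+)=\beta_M(u_k^-)$. After an Ekeland correction (Lemma~\ref{ekeland}) to a nearly critical $v_k$ with $|\beta(u_k^\pm)-\beta(v_k^\pm)|\to0$ (this is (\ref{beta0})), the barycenter estimates force $q_k^1$ and $q_k^2$ to a common limit $q$. Blowing up $v_k$ at $q_k^1$, at $q_k^2$ and at $q$ yields limits $w^1>0$, $w^2<0$ and $w$, and a change of variables in the exponential charts identifies $w$ with $w^1$ and with $w^2$, the desired contradiction. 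Theorem~\ref{thm:tobias} enters only to show $w^1,w^2$ are one-signed; no uniform separation is claimed or needed. You should also note that, as in the paper, the rescaling must be applied to the Ekeland-corrected functions: near-criticality is what makes the rescaled weak limit solve the limit equation, and you then have to compare $\beta(u_k^\pm)$ with $\beta(v_k^\pm)$ as in (\ref{beta0}) to transfer the conclusion back to $u_k$.
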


\begin{proof}
Arguing by contradiction, let $\varepsilon _{k},\delta _{k}>0$ and $u_{k}\in 
\mathcal{Z}_{\varepsilon _{k}}\cap J_{\varepsilon _{k}}^{d_{\varepsilon
_{k}}+\delta _{k}}$ be such that $\varepsilon _{k}\rightarrow 0$, $\delta
_{k}\rightarrow 0$ and, for each $k,$ one of the three statements in (\ref%
{beta}) is not true.

By Lemma \ref{ekeland} there exist $v_{k}\in \mathcal{Z}_{\varepsilon
_{k}}\cap J_{\varepsilon _{k}}^{d_{\varepsilon _{k}}+\delta _{k}}$ such that 
$\Vert u_{k}-v_{k}\Vert _{\varepsilon _{k}}\leq \sqrt{\delta _{k}}$ and $%
\Vert \nabla _{\varepsilon _{k}}J_{\varepsilon _{k}}(v)\Vert _{\varepsilon
_{k}}\leq \sqrt{\delta _{k}}$. Then, an easy estimate shows that%
\begin{equation}
\left\vert \beta (u_{k}^{\pm })-\beta (v_{k}^{\pm })\right\vert \rightarrow
0.  \label{beta0}
\end{equation}%
By Lemma \ref{lem:6}, after passing to a subsequence, there exist $%
q_{k}^{1},q_{k}^{2}\in M$ such that 
\begin{eqnarray}
\frac{2p}{p-2}(1-\frac{1}{k})c_{\infty } &<&\frac{1}{\varepsilon _{k}^{n}}%
\int_{B_{g}(q_{k}^{1},\varepsilon _{k}R)}|v_{k}^{+}|^{p}d\mu _{g}
\label{beta1} \\
&\leq &\frac{1}{\varepsilon _{k}^{n}}\int_{M}|v_{k}^{+}|^{p}d\mu _{g}\leq 
\frac{2p}{p-2}(1+\frac{1}{k})c_{\infty }  \notag
\end{eqnarray}%
and%
\begin{eqnarray}
\frac{2p}{p-2}(1-\frac{1}{k})c_{\infty } &<&\frac{1}{\varepsilon _{k}^{n}}%
\int_{B_{g}(q_{k}^{2},\varepsilon _{k}R)}|v_{k}^{-}|^{p}d\mu _{g}
\label{beta2} \\
&\leq &\frac{1}{\varepsilon _{k}^{n}}\int_{M}|v_{k}^{-}|^{p}d\mu _{g}\leq 
\frac{2p}{p-2}(1+\frac{1}{k})c_{\infty }.  \notag
\end{eqnarray}%
To simplify notation we write $\rho (v):=\frac{|v|^{p}}{\int_{M}|v|^{p}d\mu
_{g}}$. Then we have 
\begin{eqnarray*}
\left\vert \beta (v_{k}^{+})-q_{k}^{1}\right\vert &=&\left\vert
\int_{M}(x-q_{k}^{1})\rho (v_{k}^{+})d\mu _{g}\right\vert \\
&\leq &\left\vert \int_{B_{g}(q_{k}^{1},\varepsilon _{k}R)}(x-q_{k}^{1})\rho
(v_{k}^{+})d\mu _{g}\right\vert \\
&&+\left\vert \int_{M\smallsetminus B_{g}(q_{k}^{1},\varepsilon
_{k}R)}(x-q_{k}^{1})\rho (v_{k}^{+})d\mu _{g}\right\vert \\
&\leq &\varepsilon _{k}R+\frac{c}{k}
\end{eqnarray*}%
for some positive constant $c$. This inequality, together with (\ref{beta0}%
), implies that $\beta (u_{k}^{+})\in V_{r}$ for $k$ large enough.
Similarly, $\left\vert \beta (u_{k}^{-})-q_{k}^{2}\right\vert \rightarrow 0$
and $\beta (u_{k}^{-})\in V_{r}$ for $k$ large enough. Therefore $u_{k}$
must satisfy $\beta _{M}(u_{k}^{+})=\beta _{M}(u_{k}^{-}).$

Since $M$ is compact, after passing to a subsequence, we have that $%
q_{k}^{1}\rightarrow q$ and $q_{k}^{2}\rightarrow q$. The limit is the same
because $\beta _{M}(u_{k}^{+})=\beta _{M}(u_{k}^{-}).$

Let $\chi $ be a smooth cut-off function such that $\chi (z)\equiv 1$ for $%
\left\vert z\right\vert \leq \frac{T}{2}$ and $\chi (z)\equiv 0$ for $%
\left\vert z\right\vert \geq T$, $T\in (0,R).$ Set 
\begin{equation*}
w_{k}^{i}(z):=\chi (\varepsilon _{k}z)v_{k}(\exp _{q_{k}^{i}}(\varepsilon
_{k}z)),\text{\qquad }i=1,2.
\end{equation*}%
Then, up to a subsequence, $w_{k}^{i}\rightharpoonup w^{i}$ weakly in $H^{1}(%
\mathbb{R}^{n}),$ $w_{k}^{i}\rightarrow w^{i}$ a.e. in $\mathbb{R}^{n}$\ and 
$w_{k}^{i}\rightarrow w^{i}$ strongly in $L_{loc}^{p}(\mathbb{R}^{n})$.
Arguing as in the proof of Lemma \ref{lem:6} we conclude that $w^{i}$ solves 
$-\Delta w+w=|w|^{p-2}w$ and $J_{\infty }(w^{i})\leq 2c_{\infty }$.
Inequality (\ref{beta1}) implies that $(w^{1})^{+}\neq 0.$ Hence, $w^{1}>0.$
Similarly, $w^{2}<0.$

Next, we consider%
\begin{equation*}
w_{k}(z):=\chi (\varepsilon _{k}z)v_{k}(\exp _{q}(\varepsilon _{k}z)).
\end{equation*}%
Again, up to a subsequence, $w_{k}\rightharpoonup w$ weakly in $H^{1}(%
\mathbb{R}^{n}),$ $w_{k}\rightarrow w$ a.e. in $\mathbb{R}^{n}$\ and $%
w_{k}\rightarrow w$ strongly in $L_{loc}^{p}(\mathbb{R}^{n})$. For every $%
\varphi \in \mathcal{C}_{c}^{\infty }(\mathbb{R}^{n})$ and $k$ large enough
we have that%
\begin{eqnarray*}
\int_{\mathbb{R}^{n}}w_{k}(z)\varphi (z)dz &=&\int_{\mathbb{R}%
^{n}}w_{k}^{1}(\psi _{k}(z))\varphi (z)dz \\
&=&\int_{\mathbb{R}^{n}}w_{k}^{1}(y)\varphi (\psi _{k}^{-1}(z))\left\vert
\det \psi _{k}^{\prime }(z)\right\vert dz,
\end{eqnarray*}%
where $\psi _{k}(z):=\varepsilon _{k}^{-1}\exp _{q_{k}^{1}}^{-1}(\exp
_{q}(\varepsilon _{k}z)).$\ Passing to the limit as $k\rightarrow \infty $
we obtain that 
\begin{equation*}
\int_{\mathbb{R}^{n}}w(z)\varphi (z)dz=\int_{\mathbb{R}^{n}}w^{1}(z)\varphi
(z)dz\text{\qquad for every }\varphi \in \mathcal{C}_{c}^{\infty }(\mathbb{R}%
^{n}).
\end{equation*}%
Hence, $w=w^{1}>0.$ Similarly, we conclude that $w=w^{2}<0.$ This is a
contradiction.
\end{proof}

Fix $\delta _{0}>0$ and $\varepsilon _{0}>0$ as in Proposition \ref{prop:9}
and such that the map $\iota _{\varepsilon }:F_{\varepsilon }(M)\rightarrow
J_{\varepsilon }^{d_{\varepsilon }+\delta _{0}}\cap \mathcal{E}_{\varepsilon
}$ given by (\ref{iota}) is well defined for $\varepsilon \in (0,\varepsilon
_{0}),$ where%
\begin{equation*}
F_{\varepsilon }(M):=\left\{ (x,y)\in M\times M:\text{dist}_{g}(x,y)\geq
2\varepsilon R\right\} \subset F(M).
\end{equation*}%
Define $\theta _{\varepsilon }:\mathcal{Z}_{\varepsilon }\cap J_{\varepsilon
}^{d_{\varepsilon }+\delta _{0}}\rightarrow F(M)$ by%
\begin{equation*}
\theta _{\varepsilon }(u):=(\beta _{M}(u^{+}),\beta _{M}(u^{-})).
\end{equation*}%
The group $\mathbb{Z}/2:=\{1,-1\}$ acts on $F(M)$ by $-1\cdot (x,y):=(y,x).$
Its $\mathbb{Z}/2$-orbit space is the configuration space $C(M).$ Similarly,
we denote by $C_{\varepsilon }(M)$ the $\mathbb{Z}/2$-orbit space of $%
F_{\varepsilon }(M).$ On the other hand, $\mathbb{Z}/2$ acts on $\mathcal{E}%
_{\varepsilon }$ and $\mathcal{Z}_{\varepsilon }$\ by multiplication. These
actions are free, and the maps $\iota _{\varepsilon }$ and $\theta
_{\varepsilon }$ are $\mathbb{Z}/2$-equivariant, i.e. 
\begin{equation*}
\iota _{\varepsilon }(x,y)=-\iota _{\varepsilon }(y,x)\text{\qquad and\qquad 
}\theta _{\varepsilon }(-u)=-1\cdot \theta _{\varepsilon }(u).
\end{equation*}%
Hence, they induce maps between the $\mathbb{Z}/2$-orbit spaces. We write $(%
\mathcal{Z}_{\varepsilon }\cap J_{\varepsilon }^{d_{\varepsilon }+\delta
_{0}})/\left( \mathbb{Z}/2\right) $ for the $\mathbb{Z}/2$-orbit space of $%
\mathcal{Z}_{\varepsilon }\cap J_{\varepsilon }^{d_{\varepsilon }+\delta
_{0}}$ and put a hat over a function to denote the induced map between $%
\mathbb{Z}/2$-orbit spaces, e.g. we write 
\begin{equation*}
\widehat{\theta }_{\varepsilon }:(\mathcal{Z}_{\varepsilon }\cap
J_{\varepsilon }^{d_{\varepsilon }+\delta _{0}})/\left( \mathbb{Z}/2\right)
\rightarrow C(M)
\end{equation*}%
for the map induced by $\theta _{\varepsilon }.$ Let $\mathcal{\check{H}}$
be Alexander-Spanier or \v{C}ech cohomology with $\mathbb{Z}/2$%
-coefficients. Recall that it coincides with singular cohomology $\mathcal{H}%
^{\ast }$ on manifolds or, more generally, on ENRs. The following statement
holds true.

\begin{proposition}
\label{mono} There exists a homomorphism 
\begin{equation*}
\tau _{\varepsilon }:\mathcal{\check{H}}((\mathcal{Z}_{\varepsilon }\cap
J_{\varepsilon }^{d_{\varepsilon }+\delta _{0}})/\left( \mathbb{Z}/2\right)
)\rightarrow \mathcal{H}^{\ast }(C_{\varepsilon }(M))
\end{equation*}%
such that the composition 
\begin{equation*}
\tau _{\varepsilon }\circ \widehat{\theta }_{\varepsilon }^{\ast }:\mathcal{H%
}^{\ast }(C(M))\rightarrow \mathcal{H}^{\ast }(C_{\varepsilon }(M))
\end{equation*}%
is the homomorphism induced by the inclusion $C_{\varepsilon
}(M)\hookrightarrow C(M)$, which is an isomorphism for $\varepsilon $ small
enough.
\end{proposition}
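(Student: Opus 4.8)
The plan is to build the homomorphism $\tau_\varepsilon$ by means of Dold's fixed point transfer, exactly as suggested in the introduction. The key observation is that, although $\mathcal{Z}_\varepsilon$ has no explicit description and there is no reasonable map $C(M)\to \mathcal{Z}_\varepsilon$, the composition $\iota_\varepsilon$ followed by the negative gradient flow does land in $\mathcal{Z}_\varepsilon$ after a controlled amount of time. More precisely, first I would check that for $\varepsilon$ small the map $\iota_\varepsilon : F_\varepsilon(M)\to J_\varepsilon^{d_\varepsilon+\delta_0}\cap\mathcal{E}_\varepsilon$ has image disjoint from $\mathcal{A}_\varepsilon(\mathcal{D}_\varepsilon)$, or can be pushed into $\mathcal{Z}_\varepsilon$ by the flow while staying inside the sublevel set $J_\varepsilon^{d_\varepsilon+\delta_0}$: since $\iota_\varepsilon(q_1,q_2)\in\mathcal{E}_\varepsilon$ and $\mathcal{E}_\varepsilon$ is disjoint from $\mathcal{B}(\varepsilon,\pm\mathcal{P})$ by Lemma~\ref{lem:3}(a), and since $J_\varepsilon(\iota_\varepsilon(q_1,q_2))\le 2c_\infty+\delta < d_\varepsilon+\delta_0$ is well above $J_\varepsilon^0$, the only way the flow can leave $\mathcal{Z}_\varepsilon$ is by entering $\mathcal{B}(\varepsilon,\pm\mathcal{P})$, which is forbidden along orbits starting in $\mathcal{E}_\varepsilon$ by the strict positive invariance in Lemma~\ref{lem:3}(b) together with the energy estimate. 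This gives a $\mathbb{Z}/2$-equivariant map $\gamma_\varepsilon : F_\varepsilon(M)\to \mathcal{Z}_\varepsilon\cap J_\varepsilon^{d_\varepsilon+\delta_0}$, which we may take to be the flow applied for a time short enough to preserve the sublevel, or simply $\iota_\varepsilon$ itself if its image already lies in $\mathcal{Z}_\varepsilon$.

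Next I would pass to $\mathbb{Z}/2$-orbit spaces. All the $\mathbb{Z}/2$-actions in sight (on $F_\varepsilon(M)$, on $\mathcal{E}_\varepsilon$, on $\mathcal{Z}_\varepsilon\cap J_\varepsilon^{d_\varepsilon+\delta_0}$) are free, so the quotient maps are principal $\mathbb{Z}/2$-bundles and the induced map $\widehat{\gamma}_\varepsilon : C_\varepsilon(M)\to (\mathcal{Z}_\varepsilon\cap J_\varepsilon^{d_\varepsilon+\delta_0})/(\mathbb{Z}/2)$ is well defined and continuous. The composition $\widehat{\theta}_\varepsilon\circ\widehat{\gamma}_\varepsilon : C_\varepsilon(M)\to C(M)$ is, by construction, $\mathbb{Z}/2$-equivariantly homotopic to the composition of $\widehat{\iota}_\varepsilon$ with $\widehat{\theta}_\varepsilon$ (the flow homotopy descends to orbit spaces), and a direct computation — using Proposition~\ref{prop:9} to see that $\beta_M(u^+)\ne\beta_M(u^-)$ along the whole orbit so the target really is $C(M)$, and using that $\beta_M(W_{\varepsilon,q})\to q$ as $\varepsilon\to 0$ (a standard estimate on the barycenter of $W_{\varepsilon,q}$, cf. \cite{BBM07}) — shows that $\theta_\varepsilon\circ\iota_\varepsilon$ is homotopic to the inclusion $F_\varepsilon(M)\hookrightarrow F(M)$. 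Hence $\widehat{\theta}_\varepsilon\circ\widehat{\gamma}_\varepsilon$ is homotopic to the inclusion $C_\varepsilon(M)\hookrightarrow C(M)$, which induces an isomorphism on cohomology for $\varepsilon$ small because $F_\varepsilon(M)$ is a deformation retract of $F(M)$ (push points apart along geodesics) once $2\varepsilon R$ is below the diameter scale, and this retraction is $\mathbb{Z}/2$-equivariant.

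Now I would invoke Dold's transfer \cite{d} for the quotient map $\widehat{\gamma}_\varepsilon$, or rather for a fixed-point situation built from it. The point is that $\widehat{\gamma}_\varepsilon$ need not be a fibration or a covering, so one cannot directly transfer; instead one applies Dold's fixed point transfer to the map $\widehat{\theta}_\varepsilon$ (which \emph{does} go into the manifold $C(M)$), using that $C(M)$ is an ENR and that $\widehat{\gamma}_\varepsilon$ provides a section up to homotopy over the relevant sublevel. Concretely, the transfer associated to $\widehat{\theta}_\varepsilon$ together with the homotopy section $\widehat{\gamma}_\varepsilon$ yields a homomorphism $\tau_\varepsilon : \mathcal{\check H}((\mathcal{Z}_\varepsilon\cap J_\varepsilon^{d_\varepsilon+\delta_0})/(\mathbb{Z}/2)) \to \mathcal{H}^\ast(C_\varepsilon(M))$ with the defining property that $\tau_\varepsilon\circ\widehat{\theta}_\varepsilon^\ast$ equals $(\widehat{\gamma}_\varepsilon)^\ast\circ(\text{inclusion of }C_\varepsilon(M)\text{ into }C(M))^\ast$ composed back, i.e. the homomorphism induced by $\widehat{\theta}_\varepsilon\circ\widehat{\gamma}_\varepsilon : C_\varepsilon(M)\to C(M)$; by the previous paragraph this is the inclusion-induced map $\mathcal{H}^\ast(C(M))\to\mathcal{H}^\ast(C_\varepsilon(M))$, an isomorphism for small $\varepsilon$. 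One has to be a little careful that $\mathcal{\check H}$ (Alexander–Spanier/\v Cech) is the right theory on the quotient of the non-ENR set $\mathcal{Z}_\varepsilon$ and that it agrees with singular cohomology on $C(M)$ and $C_\varepsilon(M)$, which are manifolds; this is why the statement is phrased with $\mathcal{\check H}$ on the middle term.

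The main obstacle, and the part requiring genuine care, is the construction of the transfer itself: $\mathcal{Z}_\varepsilon$ is defined as the complement of an open set and is not an ANR, so Dold's transfer cannot be applied naively to the quotient map $\widehat{\gamma}_\varepsilon$. The resolution — which is the core of the argument borrowed from \cite{BCW07} — is to realize the needed homomorphism as the transfer of the \emph{barycenter-type} map $\widehat{\theta}_\varepsilon$ into the honest manifold $C(M)$, exploiting that $\widehat{\gamma}_\varepsilon$ gives a homotopy right inverse over the sublevel set, and to verify that Dold's fixed-point-index construction only uses ENR properties of $C(M)$ and not of the total space. Once that functorial setup is in place, the identification of $\tau_\varepsilon\circ\widehat{\theta}_\varepsilon^\ast$ with the inclusion-induced isomorphism is the soft homotopy computation sketched above: everything rests on the estimate that the barycenter of $\iota_\varepsilon(q_1,q_2)$ stays $O(\varepsilon)$-close to $(q_1,q_2)$, which follows from the concentration properties of $W_{\varepsilon,q}$ and Proposition~\ref{prop:9}.
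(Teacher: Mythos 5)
There is a genuine gap. Your first paragraph asserts that $\iota_\varepsilon(F_\varepsilon(M))\subset\mathcal{Z}_\varepsilon$, or that the flow can push it there while staying in the sublevel set, on the grounds that $\iota_\varepsilon(q_1,q_2)\in\mathcal{E}_\varepsilon$ is disjoint from $\mathcal{B}(\varepsilon,\pm\mathcal{P})$ and above $J_\varepsilon^0$. This does not follow. Membership in $\mathcal{Z}_\varepsilon=H_g^1(M)\smallsetminus\mathcal{A}_\varepsilon(\mathcal{D}_\varepsilon)$ is a condition on the \emph{entire forward orbit}, and an orbit starting in $\mathcal{E}_\varepsilon$ is perfectly free to enter $\mathcal{B}(\varepsilon,\pm\mathcal{P})$ at some positive time: $\mathcal{E}_\varepsilon$ is not flow-invariant, and Lemma~\ref{lem:3}(b) says only that $\mathcal{B}(\varepsilon,\pm\mathcal{P})$ is positively invariant (once inside you stay inside), not that its complement is. So the $\mathbb{Z}/2$-equivariant map $\gamma_\varepsilon:F_\varepsilon(M)\to\mathcal{Z}_\varepsilon\cap J_\varepsilon^{d_\varepsilon+\delta_0}$ on which the rest of your argument is built is not shown to exist, and its non-existence is precisely the difficulty the transfer is designed to bypass.

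Your third and fourth paragraphs correctly identify this as the obstacle, but the proposed fix is not the construction Dold's theorem actually provides. Dold's fixed point transfer is associated to a \emph{fiber-preserving} self-map $f$ of an ENR $E$ over a base $B$ with compact fixed point set, yielding $\tau_f:H^\ast(\mathrm{Fix}(f))\to H^\ast(B)$ with $\tau_f\circ(\pi|_{\mathrm{Fix}(f)})^\ast=\mathrm{id}$. There is no transfer ``associated to $\widehat{\theta}_\varepsilon$ together with a homotopy section $\widehat{\gamma}_\varepsilon$''; $\widehat{\theta}_\varepsilon$ is not a fiber-preserving map. What the paper does instead: for each $(x,y)\in F_\varepsilon(M)$ and $(\lambda,\mu)\in[0,1]^2$, consider $\kappa(\lambda\iota_\varepsilon(x,y)^++\mu\iota_\varepsilon(x,y)^-)$, and build from the entrance time $e$ and the Nehari defect $\gamma$ a continuous $\psi:H_g^1(M)\to\mathbb{R}^2$ with $\psi(u)=0$ iff $u\in\mathcal{Z}_\varepsilon$. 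Setting $g_{x,y}(\lambda,\mu)=\psi\bigl(\kappa(\lambda\iota_\varepsilon(x,y)^++\mu\iota_\varepsilon(x,y)^-)\bigr)$ and $f(x,y,\lambda,\mu)=(x,y,(\lambda,\mu)-g_{x,y}(\lambda,\mu))$ gives a $\mathbb{Z}/2$-equivariant fiber-preserving map over $F_\varepsilon(M)$ whose fixed points are exactly the parameters for which the rescaled function lands in $\mathcal{Z}_\varepsilon\cap J_\varepsilon^{d_\varepsilon+\delta_0}$; the choice of $\kappa$ with (\ref{J0}) controls the boundary behavior so the fixed point index in each fiber is $1$. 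Passing to $\mathbb{Z}/2$-orbit spaces and applying the Dold transfer $\tau_{\widehat{f}}$ to $\widehat{f}$ over $C_\varepsilon(M)$, then composing with $\widehat{\iota}^\ast$ where $\iota:\mathrm{Fix}(f)\to\mathcal{Z}_\varepsilon\cap J_\varepsilon^{d_\varepsilon+\delta_0}$, produces $\tau_\varepsilon$; the identity $\theta_\varepsilon\circ\iota=i\circ\pi|_{\mathrm{Fix}(f)}$ (which your observation that barycenters of $\iota_\varepsilon$ concentrate near $(q_1,q_2)$ is en route to) then yields $\tau_\varepsilon\circ\widehat{\theta}_\varepsilon^\ast=\widehat{i}^\ast$. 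The homotopy claim $\theta_\varepsilon\circ\iota_\varepsilon\simeq\mathrm{inclusion}$ in your second paragraph is essentially right and is part of the paper's argument, but without the fiber-preserving map and transfer machinery it does not produce $\tau_\varepsilon$.
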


\begin{proof}
The proof is analogous to that of Proposition 5.2 in \cite{BCW07}. The idea
is to express a certain subset of $\mathcal{Z}_{\varepsilon }\cap
J_{\varepsilon }^{d_{\varepsilon }+\delta _{0}}$ as a fixed point set and to
use Dold's fixed point transfer \cite{d} to define $\tau _{\varepsilon }$.
We outline the proof for the reader's convenience.

Define $\gamma :H_{g}^{1}(M)\rightarrow \mathbb{R}$ by 
\begin{equation*}
\gamma (u)=\left\{ 
\begin{array}{ll}
\frac{|u|_{p,\varepsilon }^{p}}{\Vert u\Vert _{\varepsilon }^{2}}-1 & \text{%
if }u\neq 0, \\ 
-1 & \text{if }u=0.%
\end{array}%
\right.
\end{equation*}%
This function is continuous and satisfies 
\begin{equation*}
\gamma (u^{+})=0=\gamma (u^{-})\quad \text{if and only if}\quad u\in {%
\mathcal{E}}_{\varepsilon }.
\end{equation*}%
For $u\in H_{g}^{1}(M)$ we denote by $e(u)\in \lbrack 0,\infty ]$ the
entrance time of $u$ into the set ${\mathcal{D}}_{\varepsilon }$. That is, 
\begin{equation*}
e(u):=\inf \{t\in (0,\infty ]:\varphi _{\varepsilon }(t,u)\in {\mathcal{D}}%
_{\varepsilon }\},
\end{equation*}%
where $\varphi _{\varepsilon }$ is the negative gradient flow of $%
J_{\varepsilon }.$ Since ${\mathcal{D}}_{\varepsilon }$ is strictly
positively invariant, the map $e:H_{g}^{1}(M)\rightarrow \lbrack 0,\infty ]$
is continuous. Moreover, $e(u)=\infty $ if and only if $u\in {\mathcal{Z}}%
_{\varepsilon }$. Consider the retraction 
\begin{equation*}
\varrho :H_{g}^{1}(M)\smallsetminus {\mathcal{Z}}_{\varepsilon }\rightarrow {%
\mathcal{D}}_{\varepsilon },\quad \varrho (u)=\varphi _{\varepsilon
}(e(u),u),
\end{equation*}%
and define $\psi :H_{g}^{1}(M)\rightarrow \mathbb{R}^{2}$ by 
\begin{equation*}
\psi (u)=\left\{ 
\begin{array}{ll}
0 & \text{if \ }u\in {\mathcal{Z}}_{\varepsilon }, \\ 
\frac{1}{1+e(u)^{p}}\left( \gamma (\varrho (u)^{+}),\gamma (\varrho
(u)^{-})\right) & \text{if \ }u\in H_{g}^{1}(M)\smallsetminus {\mathcal{Z}}%
_{\varepsilon }.%
\end{array}%
\right.
\end{equation*}%
$\psi $ is continuous and, since ${\mathcal{D}}_{\varepsilon }\cap {\mathcal{%
E}}_{\varepsilon }=\emptyset $, it satisfies 
\begin{equation}
\psi (u)=0\text{ if and only if }u\in {\mathcal{Z}}_{\varepsilon }.
\label{psicero}
\end{equation}%
We fix $\kappa >1$ such that 
\begin{equation}
J_{\varepsilon }\left( \kappa (\lambda \iota _{\varepsilon }(x,y)^{+}+\mu
\iota _{\varepsilon }(x,y)^{-})\right) \leq 0\quad \text{if }(x,y)\in
F_{\varepsilon }(M)\text{ and }\max \{\lambda ,\mu \}\geq 1,  \label{J0}
\end{equation}%
For $(x,y)\in F_{\varepsilon }(M)$ we define 
\begin{equation*}
g_{x,y}:[0,1]\times \lbrack 0,1]\rightarrow \mathbb{R}^{2},\quad
g_{x,y}(\lambda ,\mu )=\psi \left( \kappa (\lambda \iota _{\varepsilon
}(x,y)^{+}+\mu \iota _{\varepsilon }(x,y)^{-})\right) .
\end{equation*}%
It follows from (\ref{psicero}) that 
\begin{equation}
\kappa (\lambda \iota _{\varepsilon }(x,y)^{+}+\mu \iota _{\varepsilon
}(x,y)^{-})\in \mathcal{Z}_{\varepsilon }\cap J_{\varepsilon
}^{d_{\varepsilon }+\delta _{0}}\text{\qquad if }g_{x,y}(\lambda ,\mu )=0
\label{g=0}
\end{equation}%
and, using (\ref{J0}), we also conclude that $\kappa (\lambda \iota
_{\varepsilon }(x,y)^{+}+\mu \iota _{\varepsilon }(x,y)^{-})\in {\mathcal{D}}%
_{\varepsilon }$ if $(\lambda ,\mu )\in \partial \left( \lbrack
0,1]^{2}\right) .$ Therefore, 
\begin{equation*}
g_{x,y}(\lambda ,\mu )=\left( (\kappa \lambda )^{p-2}-1,(\kappa \mu
)^{p-2}-1\right) \text{\qquad if }(\lambda ,\mu )\in \partial \left( \lbrack
0,1]^{2}\right) .
\end{equation*}%
Next we define 
\begin{equation*}
f:F_{\varepsilon }(M)\times \lbrack 0,1]^{2}\rightarrow F_{\varepsilon
}(M)\times \mathbb{R}^{2},\quad f(x,y,\lambda ,\mu )=(x,y,(\lambda ,\mu
)-g_{x,y}(\lambda ,\mu )).
\end{equation*}%
This map is equivariant with respect to the $\mathbb{Z}/2$-action given by 
\begin{equation*}
-1\cdot (x,y,\lambda ,\mu ):=(y,x,\mu ,\lambda ).
\end{equation*}%
The projection $\pi :F_{\varepsilon }(M)\times \mathbb{R}^{2}\rightarrow
F_{\varepsilon }(M)$ is also equivariant and, since the action is free, the
induced map between the $\mathbb{Z}/2$-orbit spaces is a vector bundle. We
denote by \textrm{Fix}$(\widehat{f})$ the set of fixed points of $\widehat{f}
$. Following the proof of Lemma 5.4 in \cite{BCW07} one shows that they have
the following properties:

\begin{enumerate}
\item[(i)] $\widehat{\pi }\circ \widehat{f}=\widehat{\pi },$

\item[(ii)] $\text{\textrm{Fix}}(\widehat{f})\subset (F_{\varepsilon
}(M)\times (0,1)^{2})/\left( \mathbb{Z}/2\right) ,$

\item[(iii)] The fixed point transfer $\tau _{\widehat{f}}:H^{\ast }(\text{%
\textrm{Fix}}(\widehat{f}))\rightarrow H^{\ast }(C_{\varepsilon }(M))$
satisfies $\tau _{\widehat{f}}\circ \widehat{\pi }^{\ast }=id$.
\end{enumerate}

It follows from (\ref{g=0}) that the map 
\begin{equation*}
\iota :\text{\textrm{Fix}}(f)\rightarrow \mathcal{Z}_{\varepsilon }\cap
J_{\varepsilon }^{d_{\varepsilon }+\delta _{0}},\quad \iota (x,y,\lambda
,\mu )=\kappa (\lambda \iota _{\varepsilon }(x,y)^{+}+\mu \iota
_{\varepsilon }(x,y)^{-})
\end{equation*}%
is well defined. It is equivariant and satisfies $\theta _{\varepsilon
}\circ \iota =i\circ \pi |_{\text{\textrm{Fix}}(f)}$, where $%
i:F_{\varepsilon }(M)\hookrightarrow F(M)$ is the inclusion. We define $\tau
_{\varepsilon }$ to be the composition 
\begin{equation*}
\tau _{\varepsilon }:\mathcal{\check{H}}((\mathcal{Z}_{\varepsilon }\cap
J_{\varepsilon }^{d_{\varepsilon }+\delta _{0}})/\left( \mathbb{Z}/2\right) )%
\overset{\widehat{\iota }^{\ast }}{\longrightarrow }\mathcal{\check{H}}(%
\text{\textrm{Fix}}(\widehat{f}))\overset{\tau _{\widehat{f}}}{%
\longrightarrow }\mathcal{\check{H}}(C_{\varepsilon }(M)).
\end{equation*}%
Using property (iii) we obtain 
\begin{equation*}
\tau _{\varepsilon }\circ \widehat{\theta }_{\varepsilon }^{\ast }=\tau _{%
\widehat{f}}\circ \widehat{\iota }^{\ast }\circ \widehat{\theta }%
_{\varepsilon }^{\ast }=\tau _{\widehat{f}}\circ \widehat{\pi }^{\ast }\circ 
\widehat{i}^{\ast }=\widehat{i}^{\ast },
\end{equation*}%
as claimed.
\end{proof}

\bigskip

\noindent \textbf{Proof of Theorem \ref{thm:A1}.}\qquad Fix $\delta _{0}\in
(0,\kappa _{0})$ and $\varepsilon _{0}>0$ as above and such that the
inclusion $F_{\varepsilon }(M)\hookrightarrow F(M)$ is a homotopy
equivalence for all $\varepsilon \in (0,\varepsilon _{0})$. Arguing as in
section 5.2 of \cite{BCW07} we have that $J_{\varepsilon }$ has at least cupl%
$\left[ (\mathcal{Z}_{\varepsilon }\cap J_{\varepsilon }^{d_{\varepsilon
}+\delta _{0}})/\left( \mathbb{Z}/2\right) \right] $ sign changing solutions 
$u_{\varepsilon }$ with $J_{\varepsilon }(u_{\varepsilon })\leq
d_{\varepsilon }+\delta _{0}$. Proposition \ref{mono} yields%
\begin{equation*}
\text{cupl}\left[ (\mathcal{Z}_{\varepsilon }\cap J_{\varepsilon
}^{d_{\varepsilon }+\delta _{0}})/\left( \mathbb{Z}/2\right) \right] \geq 
\text{cupl}C(M),
\end{equation*}%
as claimed. \ \qed\noindent

\section{The shape of low energy nodal solutions}

\label{sec:Sezione-3}We shall prove the following result.

\begin{theorem}
\label{thm:shape}There exists $\varepsilon _{0}>0$ such that for every sign
changing solution $u_{\varepsilon }$ to problem \emph{(\ref{eq:Peps})} with $%
\varepsilon \in (0,\varepsilon _{0})$ and $J_{\varepsilon }(u_{\varepsilon
})\leq d_{\varepsilon }+\kappa _{0}$ the following statements hold true:

\begin{enumerate}
\item[\emph{(a)}] $u_{\varepsilon }$ has a unique local maximum point $%
Q_{\varepsilon }$ and a unique local minimum point $q_{\varepsilon }$ on $M$.

\item[\emph{(b)}] For any fixed $T>0,$ 
\begin{align*}
\lim_{\varepsilon \rightarrow 0}\Vert u_{\varepsilon }(\exp _{Q_{\varepsilon
}}(\varepsilon z))-U(z)\Vert _{\mathcal{C}^{2}(B(0,T))}& =0, \\
\lim_{\varepsilon \rightarrow 0}\Vert u_{\varepsilon }(\exp _{q_{\varepsilon
}}(\varepsilon z))+U(z)\Vert _{\mathcal{C}^{2}(B(0,T))}& =0.
\end{align*}

\item[\emph{(c)}] Moreover, 
\begin{align*}
{\sup_{\xi \in M\smallsetminus B_{g}(Q_{\varepsilon },\varepsilon
T)}u_{\varepsilon }(\xi )}& <ce^{-\mu T}+\sigma _{1}(\varepsilon ), \\
\inf_{\xi \in M\smallsetminus B_{g}(q_{\varepsilon },\varepsilon
T)}u_{\varepsilon }(\xi )& >{-ce^{-\mu T}+\sigma }_{2}{(\varepsilon ),}
\end{align*}%
for some positive constants $c,\mu $ and for some functions $\sigma
_{1},\sigma _{2}$ which go to zero as $\varepsilon $ goes to zero.

\item[\emph{(d)}] The function $\Phi _{\varepsilon }$ given by%
\begin{equation*}
u_{\varepsilon }=W_{\varepsilon ,Q_{\varepsilon }}-W_{\varepsilon
,q_{\varepsilon }}+\Phi _{\varepsilon }
\end{equation*}%
is such that $\Vert \Phi _{\varepsilon }\Vert _{\varepsilon }\rightarrow 0$
as $\varepsilon \rightarrow 0$.
\end{enumerate}
\end{theorem}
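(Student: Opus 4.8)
\textbf{Proof proposal for Theorem \ref{thm:shape}.}\qquad The plan is to start from the decomposition result of Lemma \ref{lem:6}: any sign changing $u_\varepsilon$ with $J_\varepsilon(u_\varepsilon)\le d_\varepsilon+\kappa_0$ (note $\kappa_0$ can be taken $<\delta_0$, and after Proposition \ref{prop:8} the energy level $d_\varepsilon+\kappa_0$ lies below $2c_\infty+\delta_0$ for $\varepsilon$ small) has almost all of the $L^p$-mass of $u_\varepsilon^+$ concentrated in one small ball $B_g(Q_\varepsilon,\varepsilon R)$ and almost all of the mass of $u_\varepsilon^-$ in another ball $B_g(q_\varepsilon,\varepsilon R)$, with $Q_\varepsilon\ne q_\varepsilon$ by the barycenter separation in Proposition \ref{prop:9}. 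The first task is to upgrade this $L^p$-concentration to a blow-up statement: rescaling $u_\varepsilon$ around a point $Q_\varepsilon$ where $u_\varepsilon^+$ achieves its maximum, i.e. setting $v_\varepsilon(z):=u_\varepsilon(\exp_{Q_\varepsilon}(\varepsilon z))$, one shows (exactly as in Steps 2 and 3 of the proof of Lemma \ref{lem:6}, using that $\|u_\varepsilon\|_\varepsilon$ is bounded and $J_\infty(w)\le 2c_\infty$ forces $w>0$ with $J_\infty(w)=c_\infty$ by Theorem \ref{thm:tobias}) that $v_\varepsilon\to U$ weakly in $H^1(\mathbb{R}^n)$ and in $L^p_{loc}$, where $U$ is the ground state. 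Because there is no loss of mass in the limit (the total masses match by (\ref{eq:3.19})), the convergence is in fact strong in $H^1(\mathbb{R}^n)$; the same applies to $u_\varepsilon^-$ around $q_\varepsilon$, giving $-U$.

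Next I would promote the $H^1$-convergence to $\mathcal{C}^2_{loc}$-convergence, which is statement (b). Writing the equation satisfied by $v_\varepsilon$ in normal coordinates, $-\mathrm{div}(a_\varepsilon\nabla v_\varepsilon)+v_\varepsilon=|v_\varepsilon|^{p-2}v_\varepsilon$ with coefficients $a_\varepsilon\to\mathrm{Id}$ in $\mathcal{C}^1_{loc}$, standard elliptic $L^q$- and Schauder estimates together with a bootstrap (using $2<p<2^*$ so that $|v_\varepsilon|^{p-2}v_\varepsilon$ is controlled by Sobolev embedding) yield uniform $\mathcal{C}^{2,\alpha}_{loc}$ bounds; the limit is then forced to be $U$ by uniqueness of the positive ground state. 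This argument is by now classical in the singular perturbation literature (cf. \cite{BBM07,MP09,DMP09}), and I would cite it rather than redo every estimate. From $\mathcal{C}^2$-convergence to $U$ (which has a unique, nondegenerate critical point at the origin), assertion (a) — uniqueness of the local maximum $Q_\varepsilon$ and the local minimum $q_\varepsilon$ — follows: inside the concentration balls there is exactly one critical point by the nondegeneracy of $U$ at $0$ and an inverse-function/degree argument, while outside the two balls one uses the exponential decay estimate of (c) to show $u_\varepsilon$ is too small in absolute value to have interior extrema of the right sign.

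For the decay estimate (c), the plan is a standard comparison-function argument. One first shows, via the $\mathcal{C}^2$-profile near the peaks and a covering/Harnack argument, that $|u_\varepsilon(x)|$ is uniformly small outside $B_g(Q_\varepsilon,\varepsilon T)\cup B_g(q_\varepsilon,\varepsilon T)$, so that $|u_\varepsilon|^{p-2}\le \tfrac12$ there; then $-\varepsilon^2\Delta_g u_\varepsilon+\tfrac12 u_\varepsilon\le 0$ on $u_\varepsilon^+$'s support away from $Q_\varepsilon$, and comparison with the barrier $x\mapsto C e^{-\mu\,\mathrm{dist}_g(x,Q_\varepsilon)/\varepsilon}$ — whose validity on $M$ uses the exponential decay (\ref{asympU}) of $U$ and an estimate of $\Delta_g$ of the distance function — gives $\sup_{\xi\notin B_g(Q_\varepsilon,\varepsilon T)}u_\varepsilon(\xi)\le c e^{-\mu T}+\sigma_1(\varepsilon)$, with $\sigma_1(\varepsilon)$ coming from the error in the $\mathcal{C}^0$-profile at the boundary of the inner ball; symmetrically for $u_\varepsilon^-$. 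Finally (d) is a bookkeeping step: using (b) and the explicit form (\ref{eq:Weps}) of $W_{\varepsilon,Q_\varepsilon}$ and $W_{\varepsilon,q_\varepsilon}$, one estimates $\|u_\varepsilon-W_{\varepsilon,Q_\varepsilon}+W_{\varepsilon,q_\varepsilon}\|_\varepsilon$ by splitting $M$ into the two inner balls (where the $\mathcal{C}^2$-convergence and a change of variables to $\mathbb{R}^n$ give a vanishing contribution, since $\|U-U\chi_R\|_{H^1(\mathbb{R}^n)}$ is controlled and the rescaled metric error is $o(1)$) and the complement (where (c) and (\ref{asympU}) give exponentially small contributions), obtaining $\|\Phi_\varepsilon\|_\varepsilon\to 0$.

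The main obstacle, in my view, is the passage from weak $H^1$-convergence of the blow-ups to the strong $\mathcal{C}^2_{loc}$-convergence and, relatedly, the rigorous verification that the concentration happens at a \emph{single} point for each sign with no residual mass escaping — i.e. ruling out multi-bubbling and vanishing. The energy gap provided by Theorem \ref{thm:tobias} (the fact that sign changing solutions of the limit problem have energy above $2c_\infty+\kappa_0$, strictly more than two copies of the ground state) is exactly what closes this: since $J_\varepsilon(u_\varepsilon)\le d_\varepsilon+\kappa_0$ and $d_\varepsilon\to 2c_\infty$, the limiting energy budget is barely $2c_\infty$, leaving room for precisely one positive bubble and one negative bubble and nothing more. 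Making the covering/partition argument (in the spirit of Step 1 of Lemma \ref{lem:6}) interact cleanly with the elliptic regularity bootstrap on the manifold is the technically delicate part; everything downstream is comparison-principle routine.
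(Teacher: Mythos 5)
Your outline runs broadly parallel to the paper's blow-up strategy, but several steps you describe as routine are in fact where the real work lies, and a couple of the implications you draw are in the wrong order.

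First, the paper does not feed solutions into Lemma \ref{lem:6}: that lemma requires $\delta<\delta_0$ for a $\delta_0$ determined by $\eta$, and there is no reason $\kappa_0<\delta_0$. Instead, the nontriviality of the blow-ups comes from the simple pointwise observation (Lemma \ref{lem:10}) that at any positive local maximum $Q$ of a solution, $-\varepsilon^2\Delta_g u_\varepsilon(Q)\geq 0$ forces $u_\varepsilon(Q)\geq 1$; combined with the $\mathcal{C}^2_{loc}$ convergence of the rescaled solutions (Lemma \ref{lem:11}, obtained by Schauder, as you say), this immediately gives $w^1\neq 0$. Your appeal to Steps 2 and 3 of Lemma \ref{lem:6} imports the $L^p$-mass estimate of Step 1, which is proved only for near-infimum elements of $\mathcal{Z}_\varepsilon$; it is cleaner and correct to avoid it.

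Second, your argument for (a) is circular: you propose to exclude spurious extrema outside the concentration balls by invoking the decay estimate of (c), but (c) in turn is proved from (a) (once $u_\varepsilon$ has a unique local maximum, the supremum over $M\smallsetminus B_g(Q_\varepsilon,\varepsilon T)$ is achieved on the boundary sphere, and then one compares with $U$ via the $\mathcal{C}^2$-profile — no barrier construction is needed). The paper instead proves (a) by a self-contained three-bubble energy contradiction (Lemma \ref{lem:15}): a third local extremum would, after blow-up, contribute a third bubble of $L^p$-mass $\frac{2p}{p-2}c_\infty$, pushing $J_\varepsilon(u_\varepsilon)$ above $2c_\infty+\kappa_0$. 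This uses Lemma \ref{lem:10} again (every local extremum yields a genuine bubble), not an inverse-function or degree count.

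Third, and most seriously, (d) is not bookkeeping. The hypothesis only gives $J_\varepsilon(u_\varepsilon)\leq d_\varepsilon+\kappa_0$, so the limit of $J_\varepsilon(u_\varepsilon)$ could a priori be $2c_\infty+\kappa_1$ for some $\kappa_1\in(0,\kappa_0]$, in which case a residual $\Phi_\varepsilon$ of positive $H^1_g$-norm survives even after all your splittings. The paper must first establish $J_\varepsilon(u_\varepsilon)\to 2c_\infty$ exactly (identity (\ref{limenergy})), and it does so by a nontrivial argument: assuming $\kappa_1>0$, the leftover $\Phi_\varepsilon$ is nonzero, and projecting it onto $\mathcal{N}_\varepsilon$ via $t_\varepsilon(\Phi_\varepsilon)$ produces an element of $\mathcal{N}_\varepsilon$ with energy below $c_\infty$ for small $\varepsilon$, contradicting Lemma \ref{lem:lemma1}. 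Your claim of strong $H^1(\mathbb{R}^n)$ convergence of the rescaled functions (with "no loss of mass by (\ref{eq:3.19})") is implicitly assuming this energy convergence, which you never establish; without it the mass-accounting doesn't close.
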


We split the proof into several lemmas. Note that, if $u_{\varepsilon }$ is
a sign changing solution to problem (\ref{eq:Peps}) then $u_{\varepsilon
}\in \mathcal{C}^{2}(M).$ Hence, it has a maximum point $Q_{\varepsilon }$
and a minimum point $q_{\varepsilon }$ on $M$. Moreover, $u_{\varepsilon
}(Q_{\varepsilon })>0$ and $u_{\varepsilon }(q_{\varepsilon })<0.$ The
following estimates hold true.

\begin{lemma}
\label{lem:10}If $u_{\varepsilon }$ is a sign changing solution to problem 
\emph{(\ref{eq:Peps})} and $Q_{\varepsilon }$ is a maximum point and $%
q_{\varepsilon }$ is a minimum point of $u_{\varepsilon }$ on $M,$ then 
\begin{equation*}
u_{\varepsilon }\left( Q_{\varepsilon }\right) \geq 1\text{\qquad and\qquad }%
u_{\varepsilon }\left( q_{\varepsilon }\right) \leq -1.
\end{equation*}
\end{lemma}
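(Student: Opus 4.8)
The plan is to exploit the equation together with the maximum principle at the extremal points. Since $u_\varepsilon\in\mathcal C^2(M)$ is a sign changing solution, at the maximum point $Q_\varepsilon$ we have $u_\varepsilon(Q_\varepsilon)>0$ and the Hessian of $u_\varepsilon$ at $Q_\varepsilon$ is negative semidefinite, hence $\Delta_g u_\varepsilon(Q_\varepsilon)\le 0$. Plugging this into the equation $-\varepsilon^2\Delta_g u_\varepsilon(Q_\varepsilon)+u_\varepsilon(Q_\varepsilon)=|u_\varepsilon(Q_\varepsilon)|^{p-2}u_\varepsilon(Q_\varepsilon)$, the term $-\varepsilon^2\Delta_g u_\varepsilon(Q_\varepsilon)$ is $\ge 0$, so
\begin{equation*}
u_\varepsilon(Q_\varepsilon)^{p-1}=|u_\varepsilon(Q_\varepsilon)|^{p-2}u_\varepsilon(Q_\varepsilon)=u_\varepsilon(Q_\varepsilon)-\varepsilon^2\Delta_g u_\varepsilon(Q_\varepsilon)\ge u_\varepsilon(Q_\varepsilon).
\end{equation*}
Dividing by $u_\varepsilon(Q_\varepsilon)>0$ gives $u_\varepsilon(Q_\varepsilon)^{p-2}\ge 1$, and since $p>2$ this forces $u_\varepsilon(Q_\varepsilon)\ge 1$.

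The argument at the minimum point is symmetric. At $q_\varepsilon$ we have $u_\varepsilon(q_\varepsilon)<0$ and the Hessian of $u_\varepsilon$ is positive semidefinite there, so $\Delta_g u_\varepsilon(q_\varepsilon)\ge 0$ and therefore $-\varepsilon^2\Delta_g u_\varepsilon(q_\varepsilon)\le 0$. The equation then yields
\begin{equation*}
|u_\varepsilon(q_\varepsilon)|^{p-2}u_\varepsilon(q_\varepsilon)=u_\varepsilon(q_\varepsilon)-\varepsilon^2\Delta_g u_\varepsilon(q_\varepsilon)\le u_\varepsilon(q_\varepsilon),
\end{equation*}
i.e. $|u_\varepsilon(q_\varepsilon)|^{p-2}u_\varepsilon(q_\varepsilon)\le u_\varepsilon(q_\varepsilon)<0$. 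Writing $u_\varepsilon(q_\varepsilon)=-|u_\varepsilon(q_\varepsilon)|$ and dividing by the negative number $u_\varepsilon(q_\varepsilon)$ (which reverses the inequality) gives $|u_\varepsilon(q_\varepsilon)|^{p-2}\ge 1$, hence $|u_\varepsilon(q_\varepsilon)|\ge 1$, that is $u_\varepsilon(q_\varepsilon)\le -1$.

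There is essentially no obstacle here: the only point requiring a word of care is the sign bookkeeping when dividing by $u_\varepsilon(q_\varepsilon)<0$, and the fact that a sign changing solution genuinely attains positive and negative values so that $Q_\varepsilon$ and $q_\varepsilon$ are interior extrema with the stated Hessian sign — this is already recorded in the paragraph preceding the lemma. One should also note that the estimate is uniform in $\varepsilon$, which is immediate since $\varepsilon$ does not appear in the final inequalities. This lemma will later serve as the starting point for the concentration analysis leading to Theorem \ref{thm:shape}.
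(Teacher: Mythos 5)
Your argument is correct and is essentially the paper's proof: both exploit the sign of the Laplacian at an interior extremum and plug it into the equation to obtain $u_\varepsilon(Q_\varepsilon)^{p-2}\ge 1$ (resp. $|u_\varepsilon(q_\varepsilon)|^{p-2}\ge 1$). The paper merely phrases the same computation in normal coordinates around the extremal point, using $g^{ij}(0)=\delta_{ij}$ so that $-\varepsilon^2\Delta_g u_\varepsilon(Q_\varepsilon)=-\varepsilon^2\sum_i\partial_i^2\tilde u_\varepsilon(0)\ge 0$; your coordinate-free statement $\Delta_g u_\varepsilon(Q_\varepsilon)\le 0$ is the same fact.
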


\begin{proof}
Expressing $u_{\varepsilon }$ in local normal coordinates around the point $%
Q_{\varepsilon }$ we get 
\begin{equation*}
\tilde{u}_{\varepsilon }(z):=u_{\varepsilon }\left( \exp _{Q_{\varepsilon
}}(z)\right) \text{\qquad for }\left\vert z\right\vert <R,\ z\in \mathbb{R}%
^{n}.
\end{equation*}%
Recall that in these coordinates we have 
\begin{equation}
\Delta _{g}v=\frac{1}{\sqrt{|g|}}\sum_{ij}\frac{\partial }{\partial z_{i}}%
\left( g^{ij}(z)\sqrt{|g|}\frac{\partial v}{\partial z_{j}}\right) \text{%
\qquad and\qquad }g^{ij}(0)=\delta _{ij},  \label{eq:9}
\end{equation}%
where $|g(z)|:=\det (g_{ij}(z))$ and $(g^{ij}(z))$ is the inverse matrix of $%
(g_{ij}(z))$.\ Hence,%
\begin{equation}
-\varepsilon ^{2}\sum_{i=1}^{n}\frac{\partial ^{2}}{\partial y_{i}^{2}}%
\tilde{u}_{\varepsilon }(0)+\tilde{u}_{\varepsilon }(0)=\left( \tilde{u}%
_{\varepsilon }(0)\right) ^{p-1}.  \label{eq:10}
\end{equation}%
Since $0$ a maximum point of $\tilde{u}_{\varepsilon }$ we get from (\ref%
{eq:10}) that $1\leq \tilde{u}_{\varepsilon }(0)=u_{\varepsilon }\left(
Q_{\varepsilon }\right) $. This proves the first inequality. The proof of
the second one is similar.
\end{proof}

In the following two lemmas we assume that $\varepsilon _{k}\in (0,1)$ is
such that $\varepsilon _{k}$ converges to $0$ and that$\ u_{\varepsilon
_{k}} $ is a sign changing solution to problem (\ref{eq:Peps}) with $%
\varepsilon :=\varepsilon _{k}$ which satisfies $J_{\varepsilon
_{k}}(u_{\varepsilon _{k}})\leq d_{\varepsilon _{k}}+\kappa _{0},$ where $%
\kappa _{0}$ is as in Theorem \ref{thm:tobias}. Note that%
\begin{equation*}
\frac{2p}{p-2}d_{\varepsilon _{k}}\leq \left\Vert u_{\varepsilon
_{k}}\right\Vert _{\varepsilon _{k}}^{2}=\frac{2p}{p-2}J_{\varepsilon
_{k}}(u_{\varepsilon _{k}})\leq \frac{2p}{p-2}\left( d_{\varepsilon
_{k}}+\kappa _{0}\right) .
\end{equation*}%
So, by Proposition \ref{prop:8}, there are constants $c_{1},c_{2}$ such
that, for $k$ large enough,%
\begin{equation}
0<c_{1}\leq \left\Vert u_{\varepsilon _{k}}\right\Vert _{\varepsilon
_{k}}^{2}\leq c_{2}<\infty .  \label{bounds}
\end{equation}%
Let$\ Q_{k}:=Q_{\varepsilon _{k}}$ be a maximum point and $%
q_{k}:=q_{\varepsilon _{k}}$ be a minimum point of $u_{\varepsilon _{k}}$ on 
$M,$ and set 
\begin{align*}
w_{k}^{1}(z)& :=u_{\varepsilon _{k}}\left( \exp _{Q_{k}}(\varepsilon
_{k}z)\right) \chi \left( {\varepsilon _{k}}\left\vert z\right\vert \right) 
\text{\qquad for }z\in \mathbb{R}^{n}, \\
w_{k}^{2}(z)& :=u_{\varepsilon _{k}}\left( \exp _{q_{k}}(\varepsilon
_{k}z)\right) \chi \left( {\varepsilon _{k}}\left\vert z\right\vert \right) 
\text{\qquad for }z\in \mathbb{R}^{n}, \\
\tilde{w}_{k}^{1}(z)& :=u_{\varepsilon _{k}}\left( \exp _{Q_{k}}(\varepsilon
_{k}z)\right) \text{\qquad for }z\in B\left( 0,\frac{R}{{\varepsilon _{k}}}%
\right) \subset \mathbb{R}^{n}, \\
\tilde{w}_{k}^{2}(z)& :=u_{\varepsilon _{k}}\left( \exp _{q_{k}}(\varepsilon
_{k}z)\right) \text{\qquad for }z\in B\left( 0,\frac{R}{{\varepsilon _{k}}}%
\right) \subset \mathbb{R}^{n},
\end{align*}%
where $\chi :\mathbb{R}^{+}\rightarrow \lbrack 0,1]$ is a $\mathcal{C}%
^{\infty }$ cut-off function such that $\chi (t)\equiv 1$ if $0\leq t\leq
R/2 $ and $\chi (t)\equiv 0$ if $R\leq t$. Note that $\tilde{w}%
_{k}^{1}(z)=w_{k}^{1}(z)$ and $\tilde{w}_{k}^{2}(z)=w_{k}^{2}(z)$ for $%
\left\vert {z}\right\vert <\frac{R}{2\varepsilon _{k}}$.

\begin{lemma}
\label{lem:11}There exist $w^{1},w^{2}\in H^{1}(\mathbb{R}^{n})$ such that,
after passing to a subsequence, ${w_{k}^{i}\rightharpoonup w^{i}}$ weakly in 
$H^{1}(\mathbb{R}^{n}),$ and ${w_{k}^{i}\rightarrow w^{i}}$ strongly in $%
L_{loc}^{p}(\mathbb{R}^{n})$\ and in $\mathcal{C}_{loc}^{2}(\mathbb{R}^{n})$
for $i=1,2.$ The functions $w^{1}$ and $w^{2}$ are nontrivial solutions of
the equation 
\begin{equation}
-\Delta w+w=|w|^{p-2}w  \label{limit}
\end{equation}%
such that $0$ is a maximum point of $w^{1}$ and a minimum point of $w^{2}.$
\end{lemma}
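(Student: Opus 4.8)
The plan is to extract convergent subsequences from the rescaled solutions $w_k^i$ using the uniform bounds, identify the weak limits as solutions of the limit equation, and rule out the trivial solution.

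First I would establish that $(w_k^i)$ is bounded in $H^1(\mathbb{R}^n)$. This follows from the bound \eqref{bounds} on $\|u_{\varepsilon_k}\|_{\varepsilon_k}^2$ together with a change of variables in normal coordinates (the Jacobian $|g_{Q_k}(\varepsilon_k z)|^{1/2}$ tends to $1$ locally, and the cut-off $\chi$ has bounded gradient), exactly as in Lemma 5.6 of \cite{BBM07}. Hence, up to a subsequence, $w_k^i \rightharpoonup w^i$ weakly in $H^1(\mathbb{R}^n)$, $w_k^i \to w^i$ a.e., and $w_k^i \to w^i$ strongly in $L^p_{loc}(\mathbb{R}^n)$ by the compact Sobolev embedding on bounded domains. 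Next I would verify that $w^i$ is a weak solution of \eqref{limit}: since $u_{\varepsilon_k}$ solves \eqref{eq:Peps}, the function $\tilde w_k^i$ satisfies, in normal coordinates, an equation of the form $-\sum_{ab}\partial_a(g^{ab}(\varepsilon_k z)\sqrt{|g|}\,\partial_b \tilde w_k^i)/\sqrt{|g|} + \tilde w_k^i = |\tilde w_k^i|^{p-2}\tilde w_k^i$ on $B(0,R/(2\varepsilon_k))$; testing against $\varphi \in \mathcal{C}_c^\infty(\mathbb{R}^n)$ and passing to the limit, using that $g^{ab}(\varepsilon_k z) \to \delta_{ab}$ and $\sqrt{|g(\varepsilon_k z)|}\to 1$ in $\mathcal{C}^1_{loc}$, together with the weak $H^1$ convergence and strong $L^p_{loc}$ convergence, yields that $w^i$ solves \eqref{limit} weakly, as in Lemma 5.7 of \cite{BBM07}.

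The upgrade to $\mathcal{C}^2_{loc}$ convergence is then a matter of elliptic regularity: on any fixed ball the right-hand sides $|\tilde w_k^i|^{p-2}\tilde w_k^i - \tilde w_k^i$ are bounded in $L^q$ for every $q$ (since $(w_k^i)$ is bounded in $H^1$, hence in $L^{2^\ast}$, and then a bootstrap using $W^{2,q}$ estimates and Sobolev embeddings raises integrability), so $(\tilde w_k^i)$ is bounded in $W^{2,q}_{loc}$ for large $q$, hence in $\mathcal{C}^{1,\alpha}_{loc}$, and then Schauder estimates give a uniform $\mathcal{C}^{2,\alpha}_{loc}$ bound; the Arzelà--Ascoli theorem upgrades the convergence to $\mathcal{C}^2_{loc}$ (after passing to a further subsequence). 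Because the convergence is now uniform up to second derivatives on compact sets, and because $0$ is a maximum point of $\tilde w_k^1$ (equal to $u_{\varepsilon_k}$ rescaled, which attains its max at $Q_k$) and a minimum point of $\tilde w_k^2$, the limits inherit the same: $0$ is a maximum point of $w^1$ and a minimum point of $w^2$.

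The main obstacle I expect is showing that $w^i \not\equiv 0$. For $w^1$ I would argue as in Lemma \ref{lem:10}: evaluating the rescaled equation \eqref{eq:10} at the maximum point $0$ gives $w_k^1(0) = u_{\varepsilon_k}(Q_k) \geq 1$, and since the convergence is uniform near $0$ this passes to the limit to give $w^1(0)\geq 1$, so $w^1 \not\equiv 0$; similarly $w^2(0)\leq -1$. (One could also invoke a concentration-compactness-type non-vanishing estimate derived from the lower bound on $c_\infty$, but the pointwise argument via Lemma \ref{lem:10} is cleaner since it has already been set up.) This completes the proof.
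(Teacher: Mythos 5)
Your proposal follows the paper's argument essentially step for step: bound $(w_k^i)$ in $H^1(\mathbb{R}^n)$ via \eqref{bounds} and the change-of-variables estimate from Lemma 5.6 of \cite{BBM07}, extract weak limits solving \eqref{limit} as in Lemma 5.7 of \cite{BBM07}, upgrade to $\mathcal{C}^2_{loc}$ convergence by elliptic regularity, and conclude nontriviality from $w_k^1(0)=u_{\varepsilon_k}(Q_k)\geq 1$ via Lemma \ref{lem:10}. The only difference is that you spell out the $W^{2,q}$ bootstrap leading into the Schauder estimates, where the paper simply invokes ``Sobolev embedding and interior Schauder estimates''; this is a harmless expansion, not a different route.
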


\begin{proof}
Arguing as in the proof of Lemma 5.6 in \cite{BBM07} one shows that $\Vert
w_{k}^{i}\Vert _{H^{1}(\mathbb{R}^{n})}\leq c\left\Vert u_{\varepsilon
_{k}}\right\Vert _{\varepsilon _{k}}$ for some positive constant $c$
independent of $k.$ It follows from (\ref{bounds}) that $(w_{k}^{i})$ is
bounded in $H^{1}(\mathbb{R}^{n}).$ Hence, there exists $w^{i}\in H^{1}(%
\mathbb{R}^{n})$ such that a subsequence of $(w_{k}^{i})$ converges to $%
w^{i} $ weakly in $H^{1}(\mathbb{R}^{n})$ and strongly in $L_{loc}^{p}(%
\mathbb{R}^{n})$. The fact that $w^{i}$ solves (\ref{limit}) can be proved
as in Lemma 5.7 of \cite{BBM07}. On the other hand, for $|z|<R/\varepsilon
_{k}$ the function $\tilde{w}_{k}^{i}$ satisfies the following equation 
\begin{equation*}
-\frac{1}{\sqrt{|g(\varepsilon _{k}z)|}}\sum_{ij}\frac{\partial }{\partial
z_{i}}\left( g^{ij}(\varepsilon _{k}z)\sqrt{|g(\varepsilon _{k}z)|}\frac{%
\partial \tilde{w}_{k}^{i}}{\partial z_{j}}\right) +\tilde{w}%
_{k}^{i}=\left\vert \tilde{w}_{k}^{i}\right\vert ^{p-2}\tilde{w}_{k}^{i}.
\end{equation*}%
Arguing as above, we have that the sequence $({\tilde{w}_{k}^{i})}$ is
bounded in $H^{1}(B(0,R/\varepsilon _{k}))$. By the Sobolev embedding
theorem and interior Schauder estimates in ${B\left( 0,\frac{R}{\varepsilon
_{k}}\right) }$ we get that $({\tilde{w}_{k}^{i})}$ is bounded in $\mathcal{C%
}^{2,\alpha }(B(0,R/\varepsilon _{k}))$ for some $\alpha \in (0,1)$. Then,
up to subsequence we have 
\begin{equation*}
\tilde{w}_{k}^{i}\rightarrow \tilde{w}^{i}\text{\qquad in }\mathcal{C}%
_{loc}^{2}(\mathbb{R}^{n}).
\end{equation*}%
Clearly, $\tilde{w}^{i}=w^{i}\in H^{1}(\mathbb{R}^{n})\cap \mathcal{C}^{2}(%
\mathbb{R}^{n})$. Since $w_{k}^{1}(0)=u_{\varepsilon _{k}}\left(
Q_{k}\right) \geq 1$ for any $k$, we have that $w^{1}\neq 0$. Similarly for $%
w^{2}$.
\end{proof}

\begin{lemma}
\label{lem:12}The functions $w^{1}$ and $w^{2}$ do not change sign.
Moreover, 
\begin{equation*}
w^{1}=-w^{2}=U.
\end{equation*}
\end{lemma}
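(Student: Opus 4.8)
The plan is to exploit the energy bound $J_{\varepsilon_k}(u_{\varepsilon_k})\le d_{\varepsilon_k}+\kappa_0$ together with the splitting into positive and negative parts, and then invoke Weth's Theorem \ref{thm:tobias} to rule out sign changing limits. First I would observe that, since $u_{\varepsilon_k}$ is a genuine sign changing solution, it lies in $\mathcal{Z}_{\varepsilon_k}$ and, as in Lemma \ref{lem:5}, its positive and negative parts satisfy $u_{\varepsilon_k}^{\pm}\in\mathcal{N}_{\varepsilon_k}$ with $J_{\varepsilon_k}(u_{\varepsilon_k}^{+})+J_{\varepsilon_k}(u_{\varepsilon_k}^{-})=J_{\varepsilon_k}(u_{\varepsilon_k})\le d_{\varepsilon_k}+\kappa_0$. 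By Proposition \ref{prop:8}, $d_{\varepsilon_k}\to 2c_\infty$, and each summand is $\ge c_{\varepsilon_k}\to c_\infty$ by Lemma \ref{lem:lemma1}; hence $\limsup_k J_{\varepsilon_k}(u_{\varepsilon_k}^{+})\le c_\infty+\kappa_0$ and likewise for the negative part.

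The second step is to transfer these energy bounds to the limit functions $w^1,w^2$ produced in Lemma \ref{lem:11}. Using the $\mathcal{C}^2_{loc}$ and $H^1_{loc}$ convergence of the rescaled functions and the change-of-variables estimates already carried out in the proofs of Lemma \ref{lem:6} (Step 3) and Proposition \ref{prop:9} — namely that $|g_{Q_k}(\varepsilon_k z)|^{1/2}\to 1$ uniformly on compact sets — I would show by Fatou's lemma applied to $\int |\nabla w_k^i|^2 + |w_k^i|^2$ and $\int |w_k^i|^p$, or more directly by comparing with the Nehari value, that $J_\infty(w^i)\le \liminf_k J_{\varepsilon_k}(u_{\varepsilon_k}^{i})\le c_\infty+\kappa_0 < 2c_\infty+\kappa_0$ (recall $\kappa_0\in(0,c_\infty)$). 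Since $w^1,w^2$ are nontrivial solutions of \eqref{limit} by Lemma \ref{lem:11}, Theorem \ref{thm:tobias} forces them to be of fixed sign: a sign changing solution of the limit problem would have energy strictly above $2c_\infty+\kappa_0$.

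Now that $w^1$ and $w^2$ are of fixed sign, the sign is pinned down by Lemma \ref{lem:11}: $0$ is a maximum point of $w^1$, so $w^1\ge 0$ and $w^1$ is a positive solution; $0$ is a minimum point of $w^2$, so $w^2\le 0$ and $-w^2$ is a positive solution. Since the limit problem \eqref{eq:a1} has, up to translation, a unique positive solution $U$, and since we have normalized so that the maximum of $w^1$ and the minimum of $w^2$ are attained at the origin (the unique maximum point of $U$, as $U$ is radial and strictly radially decreasing), we conclude $w^1=U$ and $w^2=-U$, which is the asserted identity $w^1=-w^2=U$.

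The main obstacle I anticipate is the energy passage $J_\infty(w^i)\le\liminf_k J_{\varepsilon_k}(u_{\varepsilon_k}^{i})$: weak convergence in $H^1(\mathbb{R}^n)$ only gives lower semicontinuity of the $H^1$ norm on all of $\mathbb{R}^n$, but the $L^p$ term must be controlled without loss of mass, and the Riemannian volume element $|g_{Q_k}|^{1/2}$ differs from $1$ away from the origin — so one has to localize carefully on balls $B(0,\tau)$, use the metric estimates valid there, and then let $\tau\to\infty$, exactly as in Step 3 of Lemma \ref{lem:6}. A secondary subtlety is making sure the origin really is the \emph{relevant} critical point of the unique positive solution when identifying $w^1$ with $U$; this is immediate from the strict radial monotonicity of $U$ guaranteed by the Gidas--Ni--Nirenberg symmetry result, together with the exponential decay \eqref{asympU}.
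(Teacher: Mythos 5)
Your overall strategy — bound $J_\infty(w^i)$, invoke Weth's Theorem~\ref{thm:tobias} to rule out a sign change, then use Lemma~\ref{lem:10} and the uniqueness of $U$ — is the same as the paper's. However, there is a genuine gap in the energy-transfer step.

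You claim $J_\infty(w^i)\le \liminf_k J_{\varepsilon_k}(u_{\varepsilon_k}^{i})\le c_\infty+\kappa_0$, where $u_{\varepsilon_k}^{1}=u_{\varepsilon_k}^{+}$ and $u_{\varepsilon_k}^{2}=u_{\varepsilon_k}^{-}$. But $w^1$ is the $L^p_{loc}$ limit of $w_k^1(z)=u_{\varepsilon_k}(\exp_{Q_k}(\varepsilon_k z))\chi(\varepsilon_k|z|)$, i.e. the rescaling of the \emph{whole} function $u_{\varepsilon_k}$, not of its positive part. A priori $w^1$ could have a nontrivial negative part, whose $L^p$ mass comes from $u_{\varepsilon_k}^-$ near $Q_k$; in that case the Fatou/change-of-variables bound yields
\begin{equation*}
\tfrac{p-2}{2p}\int_{\mathbb{R}^n}|w^1|^p\;\le\;(1+\delta)\,\liminf_k J_{\varepsilon_k}(u_{\varepsilon_k})\;\le\;(1+\delta)\bigl(2c_\infty+\kappa_0\bigr),
\end{equation*}
not the sharper $c_\infty+\kappa_0$. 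To obtain the latter you would need to know that $u_{\varepsilon_k}^-$ carries no mass on $B_g(Q_k,\varepsilon_k\tau)$ for large $\tau$, which is essentially equivalent to $w^1\ge 0$ — precisely what you are trying to prove, so the argument is circular. The paper avoids this by working directly with $|u_{\varepsilon_k}|^p$ and settling for the bound $J_\infty(w^1)\le 2c_\infty+\kappa_0$, which is all Theorem~\ref{thm:tobias} requires (as you yourself note at the end of the inequality). Replacing your sharper claim with this weaker but correctly derived bound repairs the proof; the rest of your argument — $w^1(0)\ge 1>0$ forcing $w^1>0$, uniqueness of the positive radial solution centered at its maximum, and the symmetric statement for $w^2$ — is fine.
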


\begin{proof}
By Theorem \ref{thm:tobias}, in order to prove that $w^{i}$ does not change
sign it suffices to show that 
\begin{equation}
J_{\infty }(w^{i})\leq 2c_{\infty }+\kappa _{0}.  \label{eq:12}
\end{equation}%
Since $\left\vert g(0)\right\vert =1$ we have that, for any $\delta >0,$
there exists $\rho >0$ such that $|g(y)|^{-1/2}<1+\delta $ for $\left\vert
y\right\vert <\rho $. Using this fact we obtain 
\begin{align*}
\int_{\mathbb{R}^{n}}\left\vert w^{1}(z)\right\vert ^{p}dz& \leq
\liminf_{k\rightarrow \infty }\int_{\mathbb{R}^{n}}\left\vert
w_{k}^{1}(z)\right\vert ^{p}dz \\
& =\liminf_{k\rightarrow \infty }\frac{1}{\varepsilon _{k}^{n}}
\int_{|y|<R\varepsilon_k}\left\vert u_{k}(\exp _{Q_{k}}(y)) \chi \left( 
\frac{|y|}{\varepsilon_k}\right) \right\vert ^{p}\frac{|g(y)|^{1/2}}{%
|g(y)|^{1/2}}dy \\
& \leq (1+\delta )\liminf_{k\rightarrow \infty }\frac{1}{\varepsilon _{k}^{n}%
}\int_{B_{g}(Q_{k},R\varepsilon_k)}\left\vert u_{\varepsilon _{k}}\left(
x\right) \right\vert ^{p}d\mu _{g} \\
& \leq (1+\delta )\liminf_{k\rightarrow \infty }\frac{1}{\varepsilon _{k}^{n}%
}\int_{M}\left\vert u_{\varepsilon _{k}}\left( x\right) \right\vert ^{p}d\mu
_{g}.
\end{align*}%
Multiplying by $\frac{p-2}{2p}$ and using Proposition \ref{prop:8}\ we
conclude that%
\begin{equation*}
J_{\infty }(w^{1})\leq (1+\delta )\liminf_{k\rightarrow \infty
}J_{\varepsilon _{k}}(u_{\varepsilon _{k}})\leq (1+\delta
)\liminf_{k\rightarrow \infty }\left( d_{\varepsilon _{k}}+\kappa
_{0}\right) =(1+\delta )\left( 2c_{\infty }+\kappa _{0}\right) .
\end{equation*}%
Since $\delta >0$ is arbitrary, this yields inequality (\ref{eq:12}). We
conclude that $w^{1}$ is a nontrivial solution to (\ref{limit}) which does
not change sign. Since $0$ is a maximum point of $w^{1}$ it follows that $%
w^{1}=U.$ The statements for $w^{2}$ are proved similarly.
\end{proof}

Lemmas \ref{lem:11} and \ref{lem:12} together imply that%
\begin{equation}
\tilde{w}_{k}^{1}\rightarrow U\text{\qquad and\qquad }\tilde{w}%
_{k}^{2}\rightarrow -U\text{\qquad in }\mathcal{C}_{loc}^{2}(\mathbb{R}^{n}).
\label{C2conv}
\end{equation}

\begin{lemma}
\label{lem:15}If $u_{\varepsilon }$ is a sign changing solution to problem 
\emph{(\ref{eq:Peps})} which satisfies $J_{\varepsilon }(u_{\varepsilon
})\leq d_{\varepsilon }+\kappa _{0}$ then, for $\varepsilon $ small enough, $%
u_{\varepsilon }$ has a unique local maximum point $Q_{\varepsilon }$ and a
unique local minimum point $q_{\varepsilon }$ on $M.$
\end{lemma}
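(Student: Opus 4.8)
The plan is to argue by contradiction using the compactness/concentration results already established. Suppose the statement fails; then there is a sequence $\varepsilon_k \to 0$ and sign changing solutions $u_{\varepsilon_k}$ with $J_{\varepsilon_k}(u_{\varepsilon_k}) \le d_{\varepsilon_k} + \kappa_0$ such that $u_{\varepsilon_k}$ has (at least) two distinct local maximum points, say $Q_k$ and $Q_k'$, or (symmetrically) two distinct local minimum points. I will treat the case of two local maxima; the case of two local minima is identical after replacing $u$ by $-u$. Choose $Q_k$ to be a global maximum point of $u_{\varepsilon_k}$ and $Q_k'$ a second local maximum; at a local maximum of a solution the same pointwise argument as in Lemma~\ref{lem:10} (evaluating the equation at the local maximum, where $\Delta_g u_{\varepsilon_k} \le 0$) gives $u_{\varepsilon_k}(Q_k') \ge 1$.

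The heart of the argument is to show that the two concentration points must be separated by a definite distance on the $\varepsilon_k$-scale, and that this forces the limit energy to exceed $2c_\infty + \kappa_0$, contradicting the energy bound. First I would show that $\mathrm{dist}_g(Q_k, Q_k')/\varepsilon_k \to \infty$. Indeed, if along a subsequence $\mathrm{dist}_g(Q_k, Q_k')/\varepsilon_k \le C$, then in the rescaled coordinates $w_k^1(z) = u_{\varepsilon_k}(\exp_{Q_k}(\varepsilon_k z))\chi(\varepsilon_k|z|)$ the points $z_k := \varepsilon_k^{-1}\exp_{Q_k}^{-1}(Q_k')$ stay bounded and, after passing to a further subsequence, converge to some $z_\infty \in \mathbb{R}^n$. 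By (\ref{C2conv}) we have $w_k^1 \to U$ in $\mathcal{C}^2_{loc}(\mathbb{R}^n)$; since $0$ and $z_k$ are both local maxima of $w_k^1$ with value $\ge 1$, passing to the limit $z_\infty$ is a local maximum of $U$ distinct from $0$ (if $z_\infty \ne 0$), which is impossible because $U$ is radially strictly decreasing; and if $z_\infty = 0$ then $Q_k$ and $Q_k'$ are two distinct critical points of $u_{\varepsilon_k}$ collapsing together, which contradicts the nondegeneracy of the maximum of the limit $U$ at $0$ (the Hessian $D^2 U(0)$ is negative definite, so by $\mathcal{C}^2_{loc}$ convergence $\tilde w_k^1$ has a unique critical point near $0$ for $k$ large). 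Hence $\mathrm{dist}_g(Q_k, Q_k')/\varepsilon_k \to \infty$.

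Now, having established the separation, I would build two profiles: the rescaling around $Q_k$ converges to $U$ by (\ref{C2conv}), and the rescaling $w_k'(z) := u_{\varepsilon_k}(\exp_{Q_k'}(\varepsilon_k z))\chi(\varepsilon_k|z|)$ also converges, by the same argument as in Lemmas~\ref{lem:11}--\ref{lem:12}, to a nontrivial nonnegative solution of (\ref{limit}) with a maximum at $0$, hence to $U$ as well. Because the separation goes to infinity, the supports of the two bumps essentially decouple, so a standard energy splitting (Brezis--Lieb type, exactly as in the local analysis) yields
\begin{equation*}
\liminf_{k\to\infty} \frac{1}{\varepsilon_k^n}\int_M |u_{\varepsilon_k}^+|^p\, d\mu_g \ge 2\int_{\mathbb{R}^n} U^p\, dx = 2\cdot \frac{2p}{p-2} c_\infty,
\end{equation*}
and therefore $\liminf_k |u_{\varepsilon_k}^-|_{p,\varepsilon_k}^p \ge \frac{2p}{p-2}c_\infty$ from the negative bump as well (which exists since $u_{\varepsilon_k}$ changes sign and $u_{\varepsilon_k}(q_k)\le -1$). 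This gives $\liminf_k J_{\varepsilon_k}(u_{\varepsilon_k}) = \frac{p-2}{2p}\liminf_k \|u_{\varepsilon_k}\|_{\varepsilon_k}^2 \ge 3 c_\infty > 2c_\infty + \kappa_0$, since $\kappa_0 < c_\infty$ by Theorem~\ref{thm:tobias}. This contradicts $J_{\varepsilon_k}(u_{\varepsilon_k}) \le d_{\varepsilon_k} + \kappa_0$ together with Proposition~\ref{prop:8}.

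The same reasoning with $-u_{\varepsilon_k}$ rules out two local minima, so for $\varepsilon$ small $u_\varepsilon$ has exactly one local maximum and one local minimum. I expect the main obstacle to be the first step: showing that two nearby local maxima cannot both persist as $\varepsilon_k \to 0$, i.e. ruling out the collapsing case $z_\infty = 0$. This requires the $\mathcal{C}^2_{loc}$ convergence from (\ref{C2conv}) and the nondegeneracy of the critical point of $U$ at the origin (so that near $0$ the rescaled solution $\tilde w_k^1$ inherits a unique critical point); handling local — as opposed to global — extrema also requires the pointwise lower bound $u_{\varepsilon_k}(Q_k') \ge 1$ at every local maximum, which follows from the maximum-point argument of Lemma~\ref{lem:10} applied at $Q_k'$. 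The energy-splitting step is routine given the divergent separation of the concentration points.
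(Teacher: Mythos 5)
Your proof follows essentially the same route as the paper's: argue by contradiction, rescale around each putative local extremum, use the $\mathcal{C}^{2}_{loc}$ convergence to $\pm U$ from (\ref{C2conv}) (which rests on Lemmas~\ref{lem:11}, \ref{lem:12} and Theorem~\ref{thm:tobias}) to force the extrema apart on the $\varepsilon_k$-scale, and derive $\liminf_k J_{\varepsilon_k}(u_{\varepsilon_k})\geq 3c_\infty>2c_\infty+\kappa_0$, contradicting the energy bound together with Proposition~\ref{prop:8}.

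Two points of comparison. First, you establish the stronger fact $\mathrm{dist}_g(Q_k,Q_k')/\varepsilon_k\to\infty$ and then invoke an asymptotic energy splitting; the paper is slightly more economical, fixing $\kappa_1\in(\kappa_0,c_\infty)$ and a single $T$ with $\frac{p-2}{2p}\int_{B(0,T)}U^p>\frac{2c_\infty+\kappa_1}{3}$, deducing only that the balls $B_g(q_k^i,\varepsilon_k T)$ are pairwise disjoint (from uniqueness of the critical point of $U$ in $B(0,2T)$ and the $\mathcal{C}^2$-convergence), and then summing the three $L^p$-contributions directly with a metric-determinant lower bound $c_0$; your Brezis--Lieb-style splitting is therefore not needed. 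Second, a caveat that both your argument and the paper's share: the claim that the pointwise argument of Lemma~\ref{lem:10} gives $u_{\varepsilon_k}(Q_k')\geq 1$ at \emph{every} local maximum is only valid when $u_{\varepsilon_k}(Q_k')>0$. At a local maximum with negative value the same computation (with $\Delta_g u\leq0$) yields only $u_{\varepsilon_k}(Q_k')\in(-1,0)$, so nontriviality of the corresponding rescaled limit is not automatic and the concentration argument does not directly apply. The paper likewise applies Lemmas~\ref{lem:11}, \ref{lem:12} to all three extrema without addressing this case, so this is a shared implicit restriction to extrema of the ``correct'' sign rather than a defect peculiar to your proposal, but it is worth being aware that as written neither proof rules out low-amplitude spurious extrema.
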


\begin{proof}
Arguing by contradiction we assume there is a sequence $(\varepsilon _{k})$
which goes to zero and, for each $k,$ a sign changing solution $%
u_{\varepsilon _{k}}$ of problem (\ref{eq:Peps}) which satisfies $%
J_{\varepsilon _{k}}(u_{\varepsilon _{k}})\leq d_{\varepsilon _{k}}+\kappa
_{0}$ and has three local extrema $q_{k}^{1},$ $q_{k}^{2}$ and $q_{k}^{3}$.
As before, we set 
\begin{equation*}
\tilde{w}_{k}^{i}(z):=u_{\varepsilon _{k}}(\exp _{q_{k}^{i}}(\varepsilon
_{k}z))\text{\qquad for }z\in B\left( 0,\frac{R}{\varepsilon_k}\right) ,%
\text{ }i=1,2,3.
\end{equation*}%
Fix $\kappa _{1}\in (\kappa _{0},c_{\infty })$ and choose $T>0$ such that 
\begin{equation}
\frac{p-2}{2p}\int_{B(0,T)}|U(z)|^{p}dz>\frac{2c_{\infty }+\kappa _{1}}{3}.
\label{eq:13}
\end{equation}%
It follows from (\ref{C2conv}) that, for $k$ large enough, $\tilde{w}%
_{k}^{i} $ has a unique local extremum point at $0$ in $B(0,2T).$ Hence, $%
u_{\varepsilon _{k}}$ has a unique local extremum point at $q_{k}^{i}$ in $%
B_{g}(q_{k}^{i},2\varepsilon _{k}T),$ for each $i=1,2,3.$ In particular, $%
B_{g}(q_{k}^{i},\varepsilon _{k}T)\cap B_{g}(q_{k}^{j},\varepsilon
_{k}T)=\emptyset $ if $i\neq j.$ On the other hand, since $\left\vert
g(0)\right\vert =1$, for any $\delta >0$ and $k$ large enough we have 
\begin{equation}
c_{0}:=\frac{2c_{\infty }+\kappa _{0}}{2c_{\infty }+\kappa _{1}}%
<|g(\varepsilon _{k}z)|^{\frac{1}{2}}\text{\qquad for }\left\vert
z\right\vert <T\text{ and }k\text{ large enough. }  \label{eq:14}
\end{equation}%
Therefore, for all sufficiently large $k$ we obtain 
\begin{align}
\frac{1}{\varepsilon _{k}^{n}}\int_{M}|u_{\varepsilon _{k}}|^{p}d\mu _{g}&
\geq \frac{1}{\varepsilon _{k}^{n}}\sum\limits_{i=1}^{3}%
\int_{B_{g}(q_{k}^{i},\varepsilon _{k}T)}|u_{\varepsilon _{k}}|^{p}d\mu _{g}
\notag \\
& =\sum\limits_{i=1}^{3}\int_{B(0,T)}|\tilde{w}_{k}^{i}(z)|^{p}|g(%
\varepsilon _{k}z)|^{\frac{1}{2}}dz  \notag \\
& \geq c_{0}\sum\limits_{i=1}^{3}\int_{B(0,T)}|\tilde{w}_{k}^{i}(z)|^{p}dz.
\label{eq:15}
\end{align}%
Multiplying by $\frac{p-2}{2p}$ we conclude that%
\begin{equation*}
d_{\varepsilon _{k}}+\kappa _{0}\geq J_{\varepsilon _{k}}(u_{\varepsilon
_{k}})\geq c_{0}\sum\limits_{i=1}^{3}\frac{p-2}{2p}\int_{B(0,T)}|\tilde{w}%
_{k}^{i}(z)|^{p}dz.
\end{equation*}%
Passing to the limit as $k\rightarrow \infty ,$ Proposition \ref{prop:8}\
and Lemmas \ref{lem:11} and \ref{lem:12}\ yield%
\begin{align*}
2c_{\infty }+\kappa _{0}& =\lim_{k\rightarrow \infty }d_{\varepsilon
_{k}}+\kappa _{0}\geq c_{0}\sum\limits_{i=1}^{3}\frac{p-2}{2p}%
\lim_{k\rightarrow \infty }\int_{B(0,T)}|\tilde{w}_{k}^{i}(z)|^{p}dz \\
& =3c_{0}\frac{p-2}{2p}\int_{B(0,T)}|U(z)|^{p}dz>c_{0}(2c_{\infty }+\kappa
_{1})=2c_{\infty }+\kappa _{0}.
\end{align*}%
This is a contradiction.
\end{proof}

\begin{lemma}
\label{lem:16}Fix $T>0.$ If $u_{\varepsilon }$ is a sign changing solution
to problem \emph{(\ref{eq:Peps})} which satisfies $J_{\varepsilon
}(u_{\varepsilon })\leq d_{\varepsilon }+\kappa _{0}$ then, for $\varepsilon 
$ small enough, there are constants $c,\mu $ and functions $\sigma
_{1},\sigma _{2}$ such that 
\begin{align*}
\sup_{\xi \in M\smallsetminus B_{g}(Q_{\varepsilon },\varepsilon
T)}u_{\varepsilon }(\xi )& <ce^{-\mu T}+\sigma _{1}(\varepsilon ), \\
\inf_{\xi \in M\smallsetminus B_{g}(q_{\varepsilon },\varepsilon
T)}u_{\varepsilon }(\xi )& >-ce^{-\mu T}+\sigma _{2}(\varepsilon ),
\end{align*}%
and ${\ \lim_{\varepsilon \rightarrow 0}\sigma }_{i}{(\varepsilon )=0}$ for $%
i=1,2.$
\end{lemma}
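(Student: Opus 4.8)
It is enough to prove the upper bound for $u_{\varepsilon }$ away from $Q_{\varepsilon }$; the lower bound away from $q_{\varepsilon }$ then follows by applying the statement to $-u_{\varepsilon }$, which is again a sign changing solution with $J_{\varepsilon }(-u_{\varepsilon })=J_{\varepsilon }(u_{\varepsilon })\leq d_{\varepsilon }+\kappa _{0}$ whose unique local maximum is $q_{\varepsilon }$. The plan is to first show that $u_{\varepsilon }^{+}$ is uniformly small outside fixed-scale neighbourhoods of the two bubbles, and then to upgrade this to exponential decay by comparison with an explicit exponential supersolution of $-\varepsilon ^{2}\Delta _{g}+\tfrac{1}{2}$.

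\textbf{Step 1 (no third concentration point).} The key claim is: if $\varepsilon _{k}\rightarrow 0$, $u_{k}:=u_{\varepsilon _{k}}$ are solutions as in the statement with extrema $Q_{k},q_{k}$, and $\xi _{k}\in M$ satisfy $\text{dist}_{g}(\xi _{k},Q_{k})/\varepsilon _{k}\rightarrow \infty $ and $\text{dist}_{g}(\xi _{k},q_{k})/\varepsilon _{k}\rightarrow \infty $, then $u_{k}(\xi _{k})\rightarrow 0$. Indeed, if not, after passing to a subsequence $u_{k}(\xi _{k})\rightarrow \ell \neq 0$; rescaling $v_{k}(z):=u_{k}(\exp _{\xi _{k}}(\varepsilon _{k}z))$ (with a cut-off as in the proof of Lemma \ref{lem:11}) and using (\ref{bounds}) together with the subcriticality of $p$ and interior elliptic estimates, $v_{k}\rightarrow v$ in $\mathcal{C}_{loc}^{2}(\mathbb{R}^{n})$, where $v\in H^{1}(\mathbb{R}^{n})$ is a nontrivial solution of (\ref{limit}); hence $v\in \mathcal{N}_{\infty }$ and $J_{\infty }(v)\geq c_{\infty }$. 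Since by (\ref{C2conv}) and Lemma \ref{lem:10} also $\text{dist}_{g}(Q_{k},q_{k})/\varepsilon _{k}\rightarrow \infty $, the balls $B_{g}(Q_{k},\varepsilon _{k}T)$, $B_{g}(q_{k},\varepsilon _{k}T)$, $B_{g}(\xi _{k},\varepsilon _{k}T)$ are pairwise disjoint for $k$ large; using $\tfrac{1}{\varepsilon ^{n}}\int _{M}|u|^{p}=\tfrac{2p}{p-2}J_{\varepsilon }(u)$ for solutions, the convergences (\ref{C2conv}) and $v_{k}\rightarrow v$ in $L_{loc}^{p}$, their contributions add up, for $T$ large, to at least $3\bigl(\tfrac{2p}{p-2}c_{\infty }-\eta \bigr)-o(1)$, whereas by Proposition \ref{prop:8} the whole integral is $\leq \tfrac{2p}{p-2}(d_{\varepsilon _{k}}+\kappa _{0})\rightarrow \tfrac{2p}{p-2}(2c_{\infty }+\kappa _{0})$. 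As $\kappa _{0}<c_{\infty }$ by Theorem \ref{thm:tobias}, taking $\eta $ small enough gives a contradiction. Choosing suitable maximizing sequences $\xi _{k}$ yields two consequences: first, for each $\eta >0$ there are $R_{\eta },\varepsilon _{\eta }>0$ with $|u_{\varepsilon }|\leq \eta $ on $M\smallsetminus (B_{g}(Q_{\varepsilon },\varepsilon R_{\eta })\cup B_{g}(q_{\varepsilon },\varepsilon R_{\eta }))$ for $\varepsilon <\varepsilon _{\eta }$; second, fixing any small $\delta _{0}>0$ and combining with the convergence of the rescaling around $q_{\varepsilon }$ to $-U<0$, one gets $\sigma _{1}(\varepsilon ):=\sup \{u_{\varepsilon }^{+}(\xi ):\text{dist}_{g}(\xi ,Q_{\varepsilon })\geq \delta _{0}\}\rightarrow 0$ as $\varepsilon \rightarrow 0$.

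\textbf{Step 2 (differential inequality and barrier).} Fix $\eta $ with $\eta ^{p-2}\leq \tfrac{1}{2}$, put $R_{0}:=R_{\eta }$, and take $\varepsilon $ so small that $u_{\varepsilon }<0$ on $B_{g}(q_{\varepsilon },\varepsilon R_{0})$ (by (\ref{C2conv})) and $\varepsilon R_{0}<\delta _{0}$. By Kato's inequality $-\varepsilon ^{2}\Delta _{g}u_{\varepsilon }^{+}+u_{\varepsilon }^{+}\leq (u_{\varepsilon }^{+})^{p-1}$ weakly on $M$; on $M\smallsetminus B_{g}(Q_{\varepsilon },\varepsilon R_{0})$ one has $(u_{\varepsilon }^{+})^{p-2}\leq \eta ^{p-2}\leq \tfrac{1}{2}$ off $B_{g}(q_{\varepsilon },\varepsilon R_{0})$ and $u_{\varepsilon }^{+}=0$ on $B_{g}(q_{\varepsilon },\varepsilon R_{0})$, so
\begin{equation*}
-\varepsilon ^{2}\Delta _{g}u_{\varepsilon }^{+}+\tfrac{1}{2}u_{\varepsilon }^{+}\leq 0\qquad \text{weakly on }M\smallsetminus B_{g}(Q_{\varepsilon },\varepsilon R_{0}).
\end{equation*}
In normal coordinates $y=\exp _{Q_{\varepsilon }}^{-1}(\xi )$ on $B_{g}(Q_{\varepsilon },\delta _{0})$, where $g_{ij}(y)=\delta _{ij}+O(|y|^{2})$, a direct computation shows that for $\mu =\tfrac{1}{2}$ and $\delta _{0}$ small enough the function $\Psi _{\varepsilon }(\xi ):=\bar{c}\,e^{-\mu |y|/\varepsilon }+\sigma _{1}(\varepsilon )$, with $\bar{c}:=(\Vert U\Vert _{\infty }+1)e^{\mu R_{0}}$, satisfies $-\varepsilon ^{2}\Delta _{g}\Psi _{\varepsilon }+\tfrac{1}{2}\Psi _{\varepsilon }\geq 0$ on $\{\varepsilon R_{0}\leq |y|\leq \delta _{0}\}$; on $|y|=\varepsilon R_{0}$ one has $u_{\varepsilon }^{+}\leq \Vert U\Vert _{\infty }+1=\bar{c}\,e^{-\mu R_{0}}\leq \Psi _{\varepsilon }$ by (\ref{C2conv}), and on $|y|=\delta _{0}$ one has $u_{\varepsilon }^{+}\leq \sigma _{1}(\varepsilon )\leq \Psi _{\varepsilon }$ by Step 1. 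Since $-\varepsilon ^{2}\Delta _{g}+\tfrac{1}{2}$ obeys the maximum principle, comparison gives $u_{\varepsilon }^{+}\leq \bar{c}\,e^{-\mu |y|/\varepsilon }+\sigma _{1}(\varepsilon )$ on $\{\varepsilon R_{0}\leq |y|\leq \delta _{0}\}$; together with $u_{\varepsilon }^{+}\leq \sigma _{1}(\varepsilon )$ for $\text{dist}_{g}(\xi ,Q_{\varepsilon })\geq \delta _{0}$ (Step 1) and the trivial bound $u_{\varepsilon }^{+}\leq \Vert U\Vert _{\infty }+1\leq \bar{c}\,e^{-\mu R_{0}}$ everywhere (again Step 1 and (\ref{C2conv})), we conclude that for each fixed $T>0$ and $\varepsilon $ small,
\begin{equation*}
\sup_{\xi \in M\smallsetminus B_{g}(Q_{\varepsilon },\varepsilon T)}u_{\varepsilon }(\xi )\leq \bar{c}\,e^{-\mu T}+\sigma _{1}(\varepsilon ),
\end{equation*}
which is the asserted bound with $c=\bar{c}$ and $\mu =\tfrac{1}{2}$; applying it to $-u_{\varepsilon }$ gives the lower bound around $q_{\varepsilon }$ with some $\sigma _{2}(\varepsilon )\rightarrow 0$.

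\textbf{Main obstacle.} The heart of the argument is Step 1: ruling out a third concentration point requires the uniform interior regularity that makes the rescaled limit $v$ a genuine nontrivial $H^{1}$-solution (so that $J_{\infty }(v)\geq c_{\infty }$), and a careful accounting of $L^{p}$-mass over three disjoint balls in order to beat the energy ceiling $2c_{\infty }+\kappa _{0}<3c_{\infty }$ supplied by Theorem \ref{thm:tobias}. A secondary technical point is checking that $\Psi _{\varepsilon }$ is a supersolution of $-\varepsilon ^{2}\Delta _{g}+\tfrac{1}{2}$ on the annulus $\{\varepsilon R_{0}\leq |y|\leq \delta _{0}\}$: the $O(|y|^{2})$ deviation of $g$ from the Euclidean metric produces error terms of order $\mu ^{2}|y|^{2}$ and $\varepsilon \mu |y|$ in $-\varepsilon ^{2}\Delta _{g}\Psi _{\varepsilon }$, which are absorbed by taking $\mu <1/\sqrt{2}$ and $\delta _{0}$ small, the Euclidean radial term $(n-1)\varepsilon \mu |y|^{-1}$ being favourable.
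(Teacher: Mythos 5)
Your proof is correct, but it takes a genuinely different and substantially longer route than the one in the paper. The paper exploits Lemma~\ref{lem:15} directly: since $Q_{\varepsilon }$ is the \emph{unique} local maximum of $u_{\varepsilon }$ on $M$, the supremum of $u_{\varepsilon }$ over $M\smallsetminus B_{g}(Q_{\varepsilon },\varepsilon T)$ must be attained on the sphere $\partial B_{g}(Q_{\varepsilon },\varepsilon T)$ (an interior maximizer would be a second local maximum); on that sphere, $u_{\varepsilon }(\exp _{Q_{\varepsilon }}(\varepsilon z))=\tilde w_{\varepsilon }^{1}(z)\leq U(z)+\sup _{|z|\leq T}|\tilde w_{\varepsilon }^{1}-U|$, and one concludes at once from the $\mathcal{C}^{2}_{loc}$ convergence (\ref{C2conv}) together with the exponential decay (\ref{asympU}) of $U$. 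No differential inequality, no barrier, and no accounting of $L^{p}$-mass over three disjoint balls is needed. Your argument never invokes the uniqueness of the local maximum; instead it re-derives the required smallness of $u_{\varepsilon }^{+}$ far from $Q_{\varepsilon }$ and $q_{\varepsilon }$ by a vanishing lemma (ruling out a third bubble through the energy ceiling $2c_{\infty }+\kappa _{0}<3c_{\infty }$ of Theorem~\ref{thm:tobias}), and then upgrades this to exponential decay by comparison with the supersolution $\bar c\,e^{-\mu |y|/\varepsilon }+\sigma _{1}(\varepsilon )$ of $-\varepsilon ^{2}\Delta _{g}+\tfrac12$. This is a valid alternative: it is self-contained relative to Lemma~\ref{lem:15}, it yields constants $c,\mu ,\sigma _{1}$ that are manifestly independent of $T$, and it would apply in settings (e.g.\ multi-bump solutions) where uniqueness of the local maximum is not available. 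Its cost is that it essentially redoes, on top of (\ref{C2conv}), a piece of the concentration-compactness analysis that the structure of the paper (via Lemma~\ref{lem:15}) has already absorbed. Both the vanishing lemma and the verification that $\bar c\,e^{-\mu |y|/\varepsilon }+\sigma _{1}(\varepsilon )$ is a supersolution on the annulus (controlling the $O(|y|^{2})$ metric error by taking $\mu <1/\sqrt{2}$ and $\delta _{0}$ small) are correctly handled, and the diagonal-sequence step implicit in ``choosing suitable maximizing sequences $\xi _{k}$'' closes Step~1 as intended.
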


\begin{proof}
Since for $\varepsilon $ small enough $u_{\varepsilon }$ has a unique local
maximum, the supremum of $u_{\varepsilon }$ on $M\smallsetminus
B_{g}(Q_{\varepsilon },\varepsilon T)$ is attained at a point of the
boundary $\partial B_{g}(Q_{\varepsilon },\varepsilon T)$. We consider the
function 
\begin{equation*}
\tilde{w}_{\varepsilon }^{1}(z)=u_{\varepsilon }\left( \exp _{Q_{\varepsilon
}}(\varepsilon z)\right) \text{\qquad for }\left\vert z\right\vert \leq T.
\end{equation*}%
Then, using the decay (\ref{asympU}) of $U$, for some constants $c,\mu >0$
we have 
\begin{eqnarray*}
\sup_{\xi \in M\smallsetminus B_{g}(Q_{\varepsilon },\varepsilon
T)}u_{\varepsilon }(\xi ) &\leq &\sup_{|z|=T}\left\vert \tilde{w}%
_{\varepsilon }^{1}(z)\right\vert \leq
\sup_{|z|=T}U(z)+\sup_{|z|=T}\left\vert \tilde{w}_{\varepsilon
}^{1}(z)-U(z)\right\vert \\
&\leq &ce^{-\mu T}+\sup_{|z|=T}\left\vert \tilde{w}_{\varepsilon
}^{1}(z)-U(z)\right\vert .
\end{eqnarray*}%
By(\ref{C2conv}) we have that 
\begin{equation*}
\sigma _{1}(\varepsilon ):=\sup_{|z|\leq T}\left\vert \tilde{w}_{\varepsilon
}^{1}(z)-U(z)\right\vert \rightarrow 0\text{\qquad as }\varepsilon
\rightarrow 0.
\end{equation*}%
This proves the first inequality. Analogously for the other one.
\end{proof}

\begin{lemma}
\label{lem:17}If $u_{\varepsilon }$ is a sign changing solution to problem 
\emph{(\ref{eq:Peps})} which satisfies $J_{\varepsilon }(u_{\varepsilon
})\leq d_{\varepsilon }+\kappa _{0}$ then%
\begin{equation}
\lim_{\varepsilon \rightarrow 0}J_{\varepsilon }(u_{\varepsilon
})=2c_{\infty },  \label{limenergy}
\end{equation}
and the function $\Phi _{\varepsilon }$ given by 
\begin{equation*}
u_{\varepsilon }=W_{\varepsilon ,Q_{\varepsilon }}-W_{\varepsilon
,q_{\varepsilon }}+\Phi _{\varepsilon }
\end{equation*}%
satisfies that $\Vert \Phi _{\varepsilon }\Vert _{\varepsilon }\rightarrow 0$
as $\varepsilon \rightarrow 0$.
\end{lemma}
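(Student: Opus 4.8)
The plan is to prove the two assertions separately: first the energy identity \eqref{limenergy}, and then the decay $\Vert \Phi _{\varepsilon }\Vert _{\varepsilon }\rightarrow 0$. Throughout I argue along an arbitrary sequence $\varepsilon \rightarrow 0$, extracting a subsequence so that \eqref{C2conv}, \eqref{bounds} and Lemma \ref{lem:16} apply.

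\emph{The energy identity.} Since $u_{\varepsilon }$ changes sign, $u_{\varepsilon }^{\pm }\in \mathcal{N}_{\varepsilon }$ and $J_{\varepsilon }(u_{\varepsilon })=J_{\varepsilon }(u_{\varepsilon }^{+})+J_{\varepsilon }(u_{\varepsilon }^{-})\geq 2c_{\varepsilon }$, so $\liminf_{\varepsilon \rightarrow 0}J_{\varepsilon }(u_{\varepsilon })\geq 2c_{\infty }$ by Lemma \ref{lem:lemma1}. For the opposite inequality, fix $T>0$; by \eqref{C2conv} the balls $B_{g}(Q_{\varepsilon },\varepsilon T)$ and $B_{g}(q_{\varepsilon },\varepsilon T)$ are disjoint for $\varepsilon $ small. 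Writing $J_{\varepsilon }(u_{\varepsilon })=\tfrac{p-2}{2p}\,\tfrac{1}{\varepsilon ^{n}}\int_{M}|u_{\varepsilon }|^{p}d\mu _{g}$ (as $u_{\varepsilon }\in \mathcal{N}_{\varepsilon }$) and splitting this integral into the contributions of the two balls and their complement, the change of variables $x=\exp _{Q_{\varepsilon }}(\varepsilon z)$ together with $|g_{Q_{\varepsilon }}(\varepsilon z)|^{1/2}\rightarrow 1$ uniformly on $\overline{B(0,T)}$ and \eqref{C2conv} shows that the first two contributions tend to $\tfrac{p-2}{2p}\int_{B(0,T)}|U|^{p}dz$ each; on the complement Lemma \ref{lem:16} gives $|u_{\varepsilon }|\leq ce^{-\mu T}+\sigma (\varepsilon )$ with $\sigma (\varepsilon )\rightarrow 0$, so, bounding $|u_{\varepsilon }|^{p-2}$ by its supremum and using \eqref{bounds},
\[
\frac{1}{\varepsilon ^{n}}\int_{M\smallsetminus (B_{g}(Q_{\varepsilon },\varepsilon T)\cup B_{g}(q_{\varepsilon },\varepsilon T))}|u_{\varepsilon }|^{p}d\mu _{g}\leq (ce^{-\mu T}+\sigma (\varepsilon ))^{p-2}\,\Vert u_{\varepsilon }\Vert _{\varepsilon }^{2}\leq c_{2}\,(ce^{-\mu T}+\sigma (\varepsilon ))^{p-2}.
\]
Letting first $\varepsilon \rightarrow 0$ and then $T\rightarrow \infty $, and using $\int_{\mathbb{R}^{n}}|U|^{p}=\tfrac{2p}{p-2}c_{\infty }$, yields $\limsup_{\varepsilon \rightarrow 0}J_{\varepsilon }(u_{\varepsilon })\leq 2c_{\infty }$, which is \eqref{limenergy}.

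\emph{Decay of the remainder.} From \eqref{limenergy} and $u_{\varepsilon }\in \mathcal{N}_{\varepsilon }$ we get $\Vert u_{\varepsilon }\Vert _{\varepsilon }^{2}=\tfrac{2p}{p-2}J_{\varepsilon }(u_{\varepsilon })\rightarrow \tfrac{4p}{p-2}c_{\infty }=2\Vert U\Vert _{H^{1}(\mathbb{R}^{n})}^{2}$. Expanding the square,
\begin{align*}
\Vert \Phi _{\varepsilon }\Vert _{\varepsilon }^{2} & =\Vert u_{\varepsilon }\Vert _{\varepsilon }^{2}+\Vert W_{\varepsilon ,Q_{\varepsilon }}\Vert _{\varepsilon }^{2}+\Vert W_{\varepsilon ,q_{\varepsilon }}\Vert _{\varepsilon }^{2} \\
& \quad -2(u_{\varepsilon },W_{\varepsilon ,Q_{\varepsilon }})_{\varepsilon }+2(u_{\varepsilon },W_{\varepsilon ,q_{\varepsilon }})_{\varepsilon }-2(W_{\varepsilon ,Q_{\varepsilon }},W_{\varepsilon ,q_{\varepsilon }})_{\varepsilon }.
\end{align*}
By \eqref{C2conv}, $\mathrm{dist}_{g}(Q_{\varepsilon },q_{\varepsilon })\geq 2\varepsilon R$ for $\varepsilon $ small, so $W_{\varepsilon ,Q_{\varepsilon }}$ and $W_{\varepsilon ,q_{\varepsilon }}$ have disjoint supports and the last term vanishes. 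The change of variables above, with $g_{\xi }(\varepsilon z)\rightarrow \mathrm{Id}$ uniformly on $\overline{B(0,R)}$, gives $\Vert W_{\varepsilon ,\xi }\Vert _{\varepsilon }^{2}\rightarrow \int_{B(0,R)}(|\nabla (\chi _{R}U)|^{2}+\chi _{R}^{2}U^{2})dz$, which by the exponential decay \eqref{asympU} of $U$ equals $\Vert U\Vert _{H^{1}(\mathbb{R}^{n})}^{2}$ up to a lower order term. For the mixed terms, testing \eqref{eq:Peps} against $W_{\varepsilon ,Q_{\varepsilon }}$ gives $(u_{\varepsilon },W_{\varepsilon ,Q_{\varepsilon }})_{\varepsilon }=\tfrac{1}{\varepsilon ^{n}}\int_{M}|u_{\varepsilon }|^{p-2}u_{\varepsilon }W_{\varepsilon ,Q_{\varepsilon }}d\mu _{g}$; rescaling and using $u_{\varepsilon }(\exp _{Q_{\varepsilon }}(\varepsilon z))\rightarrow U$ in $\mathcal{C}^{2}(\overline{B(0,R)})$ from \eqref{C2conv}, this tends to $\int_{B(0,R)}\chi _{R}U^{p}dz$, again $\Vert U\Vert _{H^{1}(\mathbb{R}^{n})}^{2}$ up to a lower order term, while $(u_{\varepsilon },W_{\varepsilon ,q_{\varepsilon }})_{\varepsilon }\rightarrow -\Vert U\Vert _{H^{1}(\mathbb{R}^{n})}^{2}$ since $u_{\varepsilon }(\exp _{q_{\varepsilon }}(\varepsilon z))\rightarrow -U$. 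Substituting these limits into the expansion, the leading terms cancel: $2\Vert U\Vert ^{2}+\Vert U\Vert ^{2}+\Vert U\Vert ^{2}-2\Vert U\Vert ^{2}-2\Vert U\Vert ^{2}=0$, and hence $\Vert \Phi _{\varepsilon }\Vert _{\varepsilon }\rightarrow 0$.

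\emph{Main obstacle.} The crux is the upper bound in \eqref{limenergy}: one must rule out loss of mass away from the two concentration points, which is precisely where the pointwise decay estimate of Lemma \ref{lem:16} together with the uniform bound \eqref{bounds} are used. The remaining work — keeping track of the cut-off $\chi _{R}$ in the expansion of $\Vert \Phi _{\varepsilon }\Vert _{\varepsilon }^{2}$ and identifying the limits of the mixed terms from the equation — is routine, but it is the other place where some care is required.
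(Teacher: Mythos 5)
Your argument for the energy identity \eqref{limenergy} is correct but genuinely different from the paper's: the paper argues by contradiction, assuming $J_{\varepsilon_k}(u_{\varepsilon_k})\to 2c_\infty+\kappa_1$ with $\kappa_1>0$ and then showing that the normalisation of $\Phi_{\varepsilon_k}$ onto $\mathcal{N}_{\varepsilon_k}$ would have energy below $c_\infty$, contradicting Lemma \ref{lem:lemma1}, whereas you directly bound $J_\varepsilon(u_\varepsilon)=\tfrac{p-2}{2p}|u_\varepsilon|^p_{p,\varepsilon}$ from above by splitting the integral and using the pointwise decay from Lemma \ref{lem:16} together with \eqref{bounds}. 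Your route is shorter and arguably cleaner (it only requires the observation $\int_A|u_\varepsilon|^p\leq(\sup_A|u_\varepsilon|)^{p-2}\int_A u_\varepsilon^2$), and it makes no use of the auxiliary parameter $T$ beyond a diagonal limit. That part is fine.

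The argument for $\|\Phi_\varepsilon\|_\varepsilon\to 0$, however, has a genuine gap which the phrase \emph{``up to a lower order term''} does not close. Writing $a_R:=\int(|\nabla(U\chi_R)|^2+(U\chi_R)^2)\,dz=\|U\chi_R\|^2_{H^1}$ and $b_R:=\int U^p\chi_R\,dz=\langle U,U\chi_R\rangle_{H^1}$ (the last identity by testing $-\Delta U+U=U^{p-1}$ against $U\chi_R$), the limits you compute are $\|W_{\varepsilon,\xi}\|^2_\varepsilon\to a_R$ and $(u_\varepsilon,W_{\varepsilon,Q_\varepsilon})_\varepsilon\to b_R$, so the expansion gives
\begin{equation*}
\lim_{\varepsilon\to 0}\|\Phi_\varepsilon\|^2_\varepsilon = 2\|U\|^2_{H^1}+2a_R-4b_R = 2\|U-U\chi_R\|^2_{H^1},
\end{equation*}
which is a fixed \emph{positive} number (it equals $2\|U(1-\chi_R)\|^2_{H^1}$ and $U(1-\chi_R)\not\equiv 0$ since $\chi_R$ vanishes for $|z|\geq R$). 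The residuals $a_R-\|U\|^2$ and $b_R-\|U\|^2$ are indeed ``exponentially small in $R$'', but $R$ is a fixed constant of the manifold, not a parameter you can send to infinity, so they do not cancel and the conclusion $\|\Phi_\varepsilon\|_\varepsilon\to 0$ does not follow from your computation. This is precisely why the paper uses the piecewise decomposition \eqref{eq:5circ} with a \emph{free} parameter $T$: there the only term that does not vanish as $\varepsilon\to 0$ for fixed $T$ is the tail $\|W_{\varepsilon,Q_\varepsilon}\|^2_{\varepsilon,B_g(Q_\varepsilon,\varepsilon R)\smallsetminus B^1_{\varepsilon T}}$, which is controlled by $C\int_{|z|>T}(|\nabla U|^2+U^2)$ and can be made arbitrarily small by choosing $T$ large before sending $\varepsilon\to 0$. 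To repair your orthogonal-expansion argument one would need the cutoff in $W_{\varepsilon,\xi}$ to be applied at the physical scale $R$ (as in \eqref{eq:6circ}--\eqref{eq:7circ}, where $\chi(\varepsilon z)\to 1$ pointwise), in which case all five limits are exactly $\|U\|^2_{H^1}$; but then the integration domains become $B(0,R/\varepsilon)$ and the convergence of the mixed terms requires the tail control of Lemma \ref{lem:16} together with $\int_{\mathbb{R}^n}U<\infty$, not merely the $\mathcal{C}^2(\overline{B(0,R)})$-convergence that you invoke.
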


\begin{proof}
For fixed $T>0$ we set $B_{\varepsilon T}^{1}:=B_{g}\left( Q_{\varepsilon
},\varepsilon T\right) ,$ $B_{\varepsilon T}^{2}:=B_{g}\left( q_{\varepsilon
},\varepsilon T\right) $ and $A_{\varepsilon T}:=M\smallsetminus
(B_{\varepsilon T}^{1}\cup B_{\varepsilon T}^{2}).$ Recall that for $%
\varepsilon $ small enough we have $2T\varepsilon <d_{g}(Q_{\varepsilon
},q_{\varepsilon })$. For $D\subset M$ set 
\begin{equation*}
\left\Vert v\right\Vert _{\varepsilon ,D}^{2}:=\frac{1}{\varepsilon ^{n}}%
\int_{D}(\varepsilon ^{2}\left\vert \nabla _{g}v\right\vert ^{2}+v^{2})d\mu
_{g},\text{\qquad }\left\vert v\right\vert _{p,\varepsilon ,D}^{p}:=\frac{1}{%
\varepsilon ^{n}}\int_{D}\left\vert v\right\vert ^{p}d\mu _{g}.
\end{equation*}%
Then, for $i=1,2$ we have%
\begin{align*}
\Vert u_{\varepsilon }\Vert _{\varepsilon ,B_{\varepsilon T}^{i}}^{2}& =%
\frac{1}{\varepsilon ^{n}}\int_{B_{\varepsilon T}^{i}}(\varepsilon
^{2}\left\vert \nabla _{g}u_{\varepsilon }\right\vert ^{2}+u_{\varepsilon
}^{2})d\mu _{g} \\
& =\int_{|z|<T}\left[ \sum_{jm}g^{jm}(\varepsilon z)\frac{\partial }{%
\partial z_{j}}\tilde{w}_{\varepsilon }^{i}(z)\frac{\partial }{\partial z_{m}%
}\tilde{w}_{\varepsilon }^{i}(z)+\left( \tilde{w}_{\varepsilon
}^{i}(z)\right) ^{2}\right] |g(\varepsilon z)|^{\frac{1}{2}}dz \\
& \rightarrow \int_{|z|<T}\left( |\nabla U(z)|^{2}+U(z)^{2}\right)
dz=:c_{T}\qquad \text{as \ }\varepsilon \rightarrow 0.
\end{align*}%
It follows that%
\begin{equation}
\Vert u_{\varepsilon }\Vert _{\varepsilon }^{2}-\Vert u_{\varepsilon }\Vert
_{\varepsilon ,A_{\varepsilon T}}^{2}\rightarrow 2c_{T}\qquad \text{as \ }%
\varepsilon \rightarrow 0.  \label{normonA}
\end{equation}%
Similarly,%
\begin{equation}
\left\vert u_{\varepsilon }\right\vert _{p,\varepsilon }^{p}-\left\vert
u_{\varepsilon }\right\vert _{p,\varepsilon ,A_{\varepsilon
T}}^{p}\rightarrow 2c_{p,T}\qquad \text{as \ }\varepsilon \rightarrow 0,
\label{normponA}
\end{equation}%
where 
\begin{equation*}
c_{p,T}:=\int_{|z|<T}U(z)^{p}dz.
\end{equation*}%
We write $\Phi _{\varepsilon }$ as 
\begin{align}
\Phi _{\varepsilon }& =\left( u_{\varepsilon }-W_{\varepsilon
,Q_{\varepsilon }}\right) \mid _{B_{\varepsilon T}^{1}}+\left(
u_{\varepsilon }-W_{\varepsilon ,q_{\varepsilon }}\right) \mid
_{B_{\varepsilon T}^{2}}  \label{eq:5circ} \\
& +u_{\varepsilon }\mid _{A_{\varepsilon T}}-W_{\varepsilon ,Q_{\varepsilon
}}\mid _{B_{g}(Q_{\varepsilon },\varepsilon R)\smallsetminus B_{\varepsilon
}^{1}}+W_{\varepsilon ,q_{\varepsilon }}\mid _{B_{g}(q_{\varepsilon
},\varepsilon R)\smallsetminus B_{\varepsilon }^{2}}.  \notag
\end{align}%
Using (\ref{C2conv}) we obtain 
\begin{align}
& \Vert u_{\varepsilon }-W_{\varepsilon ,Q_{\varepsilon }}\Vert
_{\varepsilon ,B_{\varepsilon T}^{1}}^{2}  \label{eq:6circ} \\
& =\int_{|z|<T}\left[ \sum_{jm}g^{jm}(\varepsilon z)\frac{\partial }{%
\partial z_{j}}\left( \tilde{w}_{\varepsilon }^{1}(z)-U(z)\chi (\varepsilon
z)\right) \frac{\partial }{\partial z_{m}}\left( \tilde{w}_{\varepsilon
}^{1}(z)-U(z)\chi (\varepsilon z)\right) \right.  \notag \\
& +\left. \left( \tilde{w}_{\varepsilon }^{1}(z)-U(z)\chi (\varepsilon
z)\right) ^{2}\right] |g(\varepsilon z)|^{\frac{1}{2}}dz\rightarrow 0\text{%
\qquad as }\varepsilon \rightarrow 0.  \notag
\end{align}%
Similarly, as $\varepsilon \rightarrow 0,$%
\begin{equation*}
\Vert u_{\varepsilon }-W_{\varepsilon ,q_{\varepsilon }}\Vert _{\varepsilon
,B_{\varepsilon T}^{2}}^{2}\rightarrow 0,\text{\qquad }\left\vert
u_{\varepsilon }-W_{\varepsilon ,Q_{\varepsilon }}\right\vert
_{p,\varepsilon ,B_{\varepsilon T}^{1}}^{p}\rightarrow 0,\text{\qquad }%
\left\vert u_{\varepsilon }-W_{\varepsilon ,q_{\varepsilon }}\right\vert
_{p,\varepsilon ,B_{\varepsilon T}^{2}}^{p}\rightarrow 0.
\end{equation*}%
Moreover, there is a constant $C$ such that, for all $\varepsilon \in (0,1),$
\begin{align}
& \Vert W_{\varepsilon ,Q_{\varepsilon }}\Vert _{\varepsilon
,B_{g}(Q_{\varepsilon },\varepsilon R)\smallsetminus B_{\varepsilon
}^{1}}^{2}  \label{eq:7circ} \\
& =\int_{T<|z|<\frac{R}{\varepsilon }}|g(\varepsilon z)|^{\frac{1}{2}}\left[
\sum_{jm}g^{jm}(\varepsilon z)\frac{\partial }{\partial z_{j}}\left(
U(z)\chi (\varepsilon z)\right) \frac{\partial }{\partial z_{m}}\left(
U(z)\chi (\varepsilon z)\right) +\left( U(z)\chi (\varepsilon z)\right) ^{2}%
\right] dz  \notag \\
& \leq C\int_{|z|>T}\left( |\nabla U(z)|^{2}+U^{2}(z)\right) dz,  \notag
\end{align}%
Similarly,%
\begin{equation}
\Vert W_{\varepsilon ,q_{\varepsilon }}\Vert _{\varepsilon
,B_{g}(q_{\varepsilon },\varepsilon R)\smallsetminus B_{\varepsilon
}^{2}}^{2},\text{ \ }\left\vert W_{\varepsilon ,Q_{\varepsilon }}\right\vert
_{p,\varepsilon ,B_{g}(Q_{\varepsilon },\varepsilon R)\smallsetminus
B_{\varepsilon }^{1}}^{p}\text{, \ }\left\vert W_{\varepsilon
,q_{\varepsilon }}\right\vert _{p,\varepsilon ,B_{g}(Q_{\varepsilon
},\varepsilon R)\smallsetminus B_{\varepsilon }^{2}}^{p},  \label{other}
\end{equation}%
are bounded above by a function of $T$ which goes to zero as $T\rightarrow
\infty \cdot $

To prove (\ref{limenergy}) we argue by contradiction. Assume there is a
sequence $\varepsilon _{k}\rightarrow 0$ such that $J_{\varepsilon
_{k}}(u_{\varepsilon _{k}})\rightarrow 2c_{\infty }+\kappa _{1}$ with $%
\kappa _{1}\in (0,\kappa _{0}].$ Then (\ref{normonA}) and (\ref{normponA})
imply that, as\ $k\rightarrow \infty ,$%
\begin{equation}
\Vert u_{\varepsilon _{k}}\Vert _{\varepsilon _{k},A_{\varepsilon
_{k}T}}^{2}\rightarrow \frac{2p}{p-2}(2c_{\infty }+\kappa
_{1})-2c_{T},\qquad \left\vert u_{\varepsilon _{k}}\right\vert
_{p,\varepsilon _{k},A_{\varepsilon _{k}T}}^{p}\rightarrow \frac{2p}{p-2}%
(2c_{\infty }+\kappa _{1})-2c_{p,T}.  \label{eq:8circ}
\end{equation}%
Let $\delta \in (0,1).$ Since $c_{T}\rightarrow \frac{2p}{p-2}c_{\infty },$ $%
c_{p,T}\rightarrow \frac{2p}{p-2}c_{\infty },$ and the right hand sides of (%
\ref{eq:7circ}) and of the similar inequalities for (\ref{other}) tend to
zero as $T\rightarrow \infty ,$ we may first choose $T>0$ and then choose $%
k_{0}=k_{0}(T)\in \mathbb{N}$ such that from (\ref{eq:5circ}), (\ref%
{eq:6circ}), (\ref{eq:7circ}) and (\ref{eq:8circ}) we obtain 
\begin{equation*}
\left\Vert \Phi _{\varepsilon _{k}}\right\Vert _{\varepsilon _{k}}^{2}\leq
(1+\delta )\frac{2p}{p-2}\kappa _{1}\qquad \text{and}\qquad \left\vert \Phi
_{\varepsilon _{k}}\right\vert _{p,\varepsilon _{k}}^{p}\geq (1-\delta )%
\frac{2p}{p-2}\kappa _{1}\qquad \forall k\geq k_{0}.
\end{equation*}%
Since we are assuming that $\kappa _{1}>0$ we have that $\Phi _{\varepsilon
_{k}}\neq 0$. Then, as in (\ref{eq:4}), $t_{\varepsilon _{k}}(\Phi
_{\varepsilon _{k}})\Phi _{\varepsilon _{k}}\in \mathcal{N}_{\varepsilon
_{k}}$ and%
\begin{equation*}
J_{\varepsilon _{k}}(t_{\varepsilon _{k}}(\Phi _{\varepsilon _{k}})\Phi
_{\varepsilon _{k}})=\frac{p-2}{2p}\left( \frac{\left\Vert \Phi
_{\varepsilon _{k}}\right\Vert _{\varepsilon _{k}}^{2}}{\left\vert \Phi
_{\varepsilon _{k}}\right\vert _{p,\varepsilon _{k}}^{2}}\right) ^{\frac{p}{%
p-2}}\leq \frac{(1+\delta )^{\frac{p}{p-2}}}{(1-\delta )^{\frac{2}{p-2}}}%
\kappa _{1}.
\end{equation*}%
Choosing $\delta $ small enough so that the right hand side of this
inequality is smaller than $c_{\infty }$ we obtain a contradiction to Lemma %
\ref{lem:lemma1}. This proves (\ref{limenergy}).

Identity (\ref{limenergy}) together with (\ref{normonA}) implies that 
\begin{equation*}
\Vert u_{\varepsilon }\Vert _{\varepsilon ,A_{\varepsilon T}}^{2}\rightarrow
2\left( \frac{2p}{p-2}c_{\infty }-c_{T}\right) \qquad \text{as \ }%
\varepsilon \rightarrow 0.
\end{equation*}%
Hence, for any $\eta >0$ we may choose $T>0$ large enough and $\varepsilon
_{0}=\varepsilon _{0}(T)>0$ so that from (\ref{eq:5circ}), (\ref{eq:6circ}),
(\ref{eq:7circ}) and (\ref{eq:8circ}) we obtain that 
\begin{equation*}
\Vert \Phi _{\varepsilon }\Vert _{\varepsilon }^{2}<\eta \qquad \text{for
every }\varepsilon \in (0,\varepsilon _{0}).
\end{equation*}%
This finishes de proof.
\end{proof}

\bigskip

\noindent \textbf{Proof of Theorem \ref{thm:shape}.}\qquad Parts \emph{(a)}, 
\emph{(b)}, \emph{(c)} and \emph{(d)} are given by Lemma \ref{lem:15},
statement (\ref{C2conv}) and Lemmas \ref{lem:16}\ and \ref{lem:17}\
respectively. \qed\noindent

\section{The cup-length of configuration spaces}

\label{sec:Sezione-5}Let $\pi :TM\rightarrow M$ be the tangent bundle of $M,$
whose fiber over $x$ is the tangent space $T_{x}M$ to $M$ at $x,$ and let $%
\pi :\mathbb{S}M\rightarrow M$ be its unit-sphere bundle. The group $\mathbb{%
Z}/2$ acts on $\mathbb{S}M$ by multiplication on each fiber, i.e. $-1\cdot
(x,z)=(x,-z)$ for every $x\in M$ and $z\in T_{x}M$ with $\left\vert
z\right\vert =1.$ We denote its $\mathbb{Z}/2$-orbit space by $\mathbb{P}M.$
Then, $\pi $ induces a map $\widehat{\pi }:\mathbb{P}M\rightarrow M$ which
is a fiber bundle with fiber the real projective space $\mathbb{R}P^{n-1}.$
The homomorphism $\theta :\mathcal{H}^{\ast }(\mathbb{R}P^{n-1})\rightarrow 
\mathcal{H}^{\ast }(\mathbb{P}M)$ which sends the generator $\omega \in 
\mathcal{H}^{1}(\mathbb{R}P^{n-1})$ to the first Stiefel-Whitney class $%
\widehat{\omega }\in \mathcal{H}^{1}(\mathbb{P}M)$ of the bundle $\mathbb{S}%
M\rightarrow \mathbb{P}M$ is a cohomology extension of the fiber, so the
Leray-Hirsch theorem \cite{s} provides an isomorphism%
\begin{equation}
\mathcal{H}^{\ast }(M)\otimes \mathcal{H}^{\ast }(\mathbb{R}P^{n-1})\cong 
\mathcal{H}^{\ast }(\mathbb{P}M)  \label{lh}
\end{equation}%
given by $\zeta \otimes \omega \mapsto \widehat{\pi }^{\ast }(\zeta
)\smallsmile \widehat{\omega }.$

If $V$ is a tubular neighborhood of $M$ in $\mathbb{R}^{N},$ we define a map 
$\alpha :\mathbb{S}M\rightarrow F(V)$ by%
\begin{equation*}
\alpha (x,z):=(\exp _{x}(\frac{R}{2}z),\exp _{x}(-\frac{R}{2}z)).
\end{equation*}%
The action of $\mathbb{Z}/2$\ on $F(V)$ is given by $-1\cdot (x,y)=(y,x).$
Therefore, $\alpha $ is $\mathbb{Z}/2$-equivariant, i.e. $\alpha
(x,-z)=-1\cdot \alpha (x,z),$ and it induces a map between the $\mathbb{Z}/2$%
-orbit spaces, which we denote by%
\begin{equation*}
\widehat{\alpha }:\mathbb{P}M\rightarrow C(V).
\end{equation*}

The cup-length of a map $f:X\rightarrow Y$ is the smallest integer $k\geq 1$
such that $f^{\ast }(\zeta _{1}\smallsmile \cdots \smallsmile \zeta _{k})=0$
for any $k$ cohomology classes $\zeta _{1},\dots ,\zeta _{k}\in \widetilde{%
\mathcal{H}}^{\ast }(Y)$. It is denoted cupl$(f).$ If $f$ is an inclusion $%
X\hookrightarrow Y$ we write cupl$_{Y}X:=$\ cupl$(f).$ It is easy to see
that cupl$(g\circ f)\leq \min \{$cupl$(f),$cupl$(g)\},$ cf. \cite{cp}. Since
the image of $\alpha $ is contained in $C(M),$ we conclude that%
\begin{equation}
\text{cupl}(\widehat{\alpha })\leq \text{ cupl}_{C(V)}C(M)\leq \text{ cupl}%
\,C(M).  \label{cuplalpha}
\end{equation}%
To prove Theorem \ref{thm:cuplength}\ we need the following lemma. Its proof
uses the Leray-Serre spectral sequence, which is treated for example in \cite%
{mc}.

\begin{lemma}
\label{lem:pullback}Let $V$ be a tubular neighborhood of $M$ in $\mathbb{R}%
^{N}$. Assume that $\mathcal{H}^{i}(M)=0$ for all $0<i<m$. Then, given a
cohomology class $\zeta \in \mathcal{H}^{m}(M),$ there exists $\widehat{%
\zeta }\in \mathcal{H}^{m}(C(V))$ such that 
\begin{equation*}
\widehat{\alpha }^{\ast }(\widehat{\zeta })=\widehat{\pi }^{\ast }(\zeta ).
\end{equation*}
\end{lemma}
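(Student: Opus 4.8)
The plan is to model $\widehat\alpha$ on the ``thin'' part of the configuration space, reduce the statement to an extension problem over $C(V)$, and solve that extension problem with the Leray--Serre spectral sequence.

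First I would replace $\widehat\alpha$ by a homotopy equivalent map supported near the diagonal. Since $R$ is small, $\alpha(\mathbb{S}M)$ lies in $F_\rho(V):=\{(x,y)\in V\times V:0<|x-y|<\rho\}$ for any $\rho>R$. For $\rho$ small, $F_\rho(V)$ deformation retracts onto the unit normal sphere bundle of the diagonal $\Delta_V\cong V$ in $V\times V$; this normal bundle is $TV$, and its restriction to $M$ is $T\mathbb{R}^N|_M=TM\oplus\nu$, where $\nu$ is the normal bundle of $M\hookrightarrow\mathbb{R}^N$. Passing to $\mathbb{Z}/2$--orbit spaces (the action being fibrewise antipodal), the open set $U_\rho:=F_\rho(V)/(\mathbb{Z}/2)\subset C(V)$ is homotopy equivalent to $\mathbb{P}(TM\oplus\nu)$, and since $TM\oplus\nu$ is trivial over $M$ this is $M\times\mathbb{R}P^{N-1}$. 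Expanding $\exp$ to first order in $R$ (one has $\exp_x(\tfrac R2 z)-\exp_x(-\tfrac R2 z)=Rz+O(R^3)$, pointing in the direction $z\in T_xM$), one checks that $\widehat\alpha$ is homotopic to $\iota\circ j$, where $\iota:U_\rho\hookrightarrow C(V)$ is the inclusion and $j:\mathbb{P}M=\mathbb{P}(TM)\to\mathbb{P}(TM\oplus\nu)\simeq U_\rho$ is the inclusion of projective subbundles over $M$ induced by $TM\hookrightarrow TM\oplus\nu$.

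Next I would dispose of $j$. Both $\widehat\pi:\mathbb{P}(TM)\to M$ and $\widehat\pi':\mathbb{P}(TM\oplus\nu)\to M$ carry the tautological line class restricting to the fibre generator, so by Leray--Hirsch $\mathcal{H}^\ast(\mathbb{P}(TM))\cong\mathcal{H}^\ast(M)\otimes\mathcal{H}^\ast(\mathbb{R}P^{n-1})$, $\mathcal{H}^\ast(\mathbb{P}(TM\oplus\nu))\cong\mathcal{H}^\ast(M)\otimes\mathcal{H}^\ast(\mathbb{R}P^{N-1})$, $j^\ast$ is the identity on the first tensor factor and restriction on the second, hence surjective; in particular, because $j$ lies over $M$, $j^\ast(\widehat\pi'^{\,\ast}\zeta)=\widehat\pi^\ast\zeta$. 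Thus it suffices to produce $\widehat\zeta\in\mathcal{H}^m(C(V))$ with $\iota^\ast\widehat\zeta=\widehat\pi'^{\,\ast}\zeta$, i.e. to extend the class $\zeta\otimes1\in\mathcal{H}^m(M\times\mathbb{R}P^{N-1})=\mathcal{H}^m(U_\rho)$ over $C(V)$.

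This extension is the heart of the matter, and it is where the Leray--Serre spectral sequence and the hypothesis $\mathcal{H}^i(M)=0$ for $0<i<m$ are used. The obstruction to extending $\zeta\otimes1$ lies in $\mathcal{H}^{m+1}(C(V),U_\rho)$, which by excision is the cohomology of the ``thick'' configuration subspace (pairs at distance bounded below) relative to its boundary. I would filter $C(V)$ by the distance between the two points and compare the Leray--Serre spectral sequences of the resulting pieces with that of $\widehat\pi':M\times\mathbb{R}P^{N-1}\to M$: the vanishing of $\mathcal{H}^i(M)$ for $0<i<m$ forces every entry of these spectral sequences in total degrees $\le m+1$ to be assembled from $\mathcal{H}^0(M)$ and $\mathcal{H}^m(M)$ only, which, combined with the transfer for the double cover $F(V)\to C(V)$, identifies the relevant differentials and shows the obstruction vanishes, yielding $\widehat\zeta$. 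Proving that this obstruction is zero — equivalently, that $\iota^\ast:\mathcal{H}^m(C(V))\to\mathcal{H}^m(U_\rho)$ hits $\zeta\otimes1$ — is the main difficulty; the preceding reduction is essentially formal, and once $\widehat\zeta$ is in hand one has $\widehat\alpha^\ast\widehat\zeta=j^\ast\iota^\ast\widehat\zeta=j^\ast(\widehat\pi'^{\,\ast}\zeta)=\widehat\pi^\ast\zeta$, as required.
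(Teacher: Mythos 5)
Your reduction to an extension problem is correct and geometrically attractive, and it does reframe the lemma in a way that differs from the paper. The paper does not pass through the ``thin'' configuration space at all; it works with the long exact/Thom--Gysin sequences to show $\mathcal{H}^q(F(V))\cong\mathcal{H}^q(M\times M)$ for $q<m+N-1$, then exhibits $\overline\zeta:=\pi_1^\ast\zeta\in\mathcal{H}^m(F(V))$ with $\alpha^\ast\overline\zeta=\pi^\ast\zeta$, and finally shows $\overline\zeta$ descends to $C(V)$ by a comparison of the Leray--Serre spectral sequences of the two Borel fibrations $F(V)\times_{\mathbb{Z}/2}\mathbb{S}^\infty\to\mathbb{R}P^\infty$ and $\mathbb{S}M\times_{\mathbb{Z}/2}\mathbb{S}^\infty\to\mathbb{R}P^\infty$: the hypothesis makes $\overline\zeta$ survive to $E_{m+1}^{0,m}$, the differential $d_{m+1}$ is detected on $E_2^{m+1,0}\cong\mathcal{H}^{m+1}(\mathbb{R}P^\infty)$ on which $\alpha^\ast$ is the identity, and since $\pi^\ast\zeta$ is a permanent cycle so is $\overline\zeta$. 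This is a concrete, checkable argument; there is also a final correction by a power of $\widetilde\omega$ because $\widehat\alpha^\ast\widetilde\zeta$ is a priori only determined modulo $\widehat\omega^m$.

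The gap in your version is precisely the step you yourself flag as ``the main difficulty'': you never actually prove that $\iota^\ast:\mathcal{H}^m(C(V))\to\mathcal{H}^m(U_\rho)$ hits $\zeta\otimes1$. The remedy you gesture at does not constitute an argument. ``Filtering $C(V)$ by the distance between the two points'' does not produce a fibration, so there is no Leray--Serre spectral sequence ``of the resulting pieces'' to compare; and the thick piece $C(V)\smallsetminus U_{\rho/2}$, which by excision computes $\mathcal{H}^{m+1}(C(V),U_\rho)$, is not controlled by the hypothesis on $\mathcal{H}^\ast(M)$ in any evident way (as $\rho\to 0$ it is all of $C(V)$, not something simpler). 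The appeal to ``the transfer for the double cover $F(V)\to C(V)$'' is also misplaced here: with $\mathbb{Z}/2$ coefficients the composite $\mathrm{tr}\circ\psi^\ast$ is multiplication by $2=0$, so the transfer alone cannot identify which classes on $C(V)$ restrict as you want. To close the gap one would need, in some form, exactly what the paper supplies: an identification of $\mathcal{H}^q(F(V))$ in low degrees and a spectral-sequence (or equivalent) argument that $\pi_1^\ast\zeta$ descends from $F(V)$ to $C(V)$; once that is in hand, restricting to $F_\rho(V)$ and $U_\rho$ recovers your extension. As written, your proposal reformulates the lemma cleanly but does not prove it.
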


\begin{proof}
Consider the diagram 
\begin{equation*}
\begin{array}{ccc}
\mathbb{P}M & \overset{\widehat{\alpha }}{\longrightarrow } & C(V) \\ 
\phi \uparrow \quad &  & \quad \uparrow \psi \\ 
\mathbb{S}M & \overset{\alpha }{\longrightarrow } & F(V) \\ 
\pi \downarrow \quad &  & \quad \downarrow \pi _{1} \\ 
M & \overset{i}{\hookrightarrow } & V%
\end{array}%
\end{equation*}%
where $\pi _{1}$ is the projection onto the first factor, and $\phi $ and $%
\psi $ are the obvious projections. Thus, $\widehat{\pi }\circ \phi =\pi .$
This diagram commutes up to homotopy. Since $M$ is a strong deformation
retract of $V,$ the inclusion $i$ induces an isomorphism in cohomology.
Hence, there exists $\overline{\zeta }\in \mathcal{H}^{m}(F(V))$ such that $%
\alpha ^{\ast }(\overline{\zeta })=\pi ^{\ast }(\zeta )\in \mathcal{H}^{m}(%
\mathbb{S}M).$ Next, we will show that $\overline{\zeta }=\psi ^{\ast }(%
\widetilde{\zeta })$ for some $\widetilde{\zeta }\in \mathcal{H}^{m}(C(V))$.

From the Thom-Gysin sequence of the sphere bundle $\pi :\mathbb{S}%
M\rightarrow M$ we obtain that $\pi ^{\ast }:\mathcal{H}^{i}(M)\rightarrow 
\mathcal{H}^{i}(\mathbb{S}M)$\ is an isomorphism for all $i<m+n-1$ and a
monomorphism for $i=m+n-1$. On the other hand, setting $D_{\varepsilon
}V:=\{(x,y)\in V\times V:\left\vert x-y\right\vert \leq \varepsilon \},$ and
using excision, homotopy invariance and the Thom isomorphism we obtain, for $%
\varepsilon $ small enough, 
\begin{equation*}
\mathcal{H}^{i}(V\times V,F(V))\cong \mathcal{H}^{i}(D_{\varepsilon
}V,D_{\varepsilon }V\smallsetminus D_{0}V)\cong \widetilde{\mathcal{H}}%
^{i-N}(V)\cong \widetilde{\mathcal{H}}^{i-N}(M).
\end{equation*}%
From the exact cohomology sequence of the pair $(V\times V,F(V))$ we deduce
that 
\begin{equation}
\mathcal{H}^{i}(F(V))\cong \mathcal{H}^{i}(V\times V)\cong \mathcal{H}%
^{i}(M\times M)\quad \text{for all }i<m+N-1.  \label{HFV}
\end{equation}

We consider the Leray-Serre spectral sequences of the Borel fibrations (for
the group $G=\mathbb{Z}/2$) 
\begin{equation*}
F(V)\underset{\mathbb{Z}/2}{\times }\mathbb{S}^{\infty }\rightarrow \mathbb{R%
}P^{\infty },\text{\qquad }\mathbb{S}M\underset{\mathbb{Z}/2}{\times }%
\mathbb{S}^{\infty }\rightarrow \mathbb{R}P^{\infty }.
\end{equation*}%
Since $\mathbb{Z}/2$ acts freely on $F(V)$ and on $\mathbb{S}M$, the
projections $F(V)\times \mathbb{S}^{\infty }\rightarrow F(V)$ and $\mathbb{S}%
M\times \mathbb{S}^{\infty }\rightarrow \mathbb{S}M$ induce homotopy
equivalences between the $\mathbb{Z}/2$-orbit spaces 
\begin{equation*}
F(V)\underset{\mathbb{Z}/2}{\times }\mathbb{S}^{\infty }\simeq C(V),\qquad 
\mathbb{S}M\underset{\mathbb{Z}/2}{\times }\mathbb{S}^{\infty }\simeq 
\mathbb{P}M.
\end{equation*}%
The map $\alpha :\mathbb{S}M\rightarrow F(V)$ induces a map of spectral
sequences 
\begin{equation*}
\alpha ^{\ast }:E_{r}^{p,q}\rightarrow \widetilde{E}_{r}^{p,q}.
\end{equation*}%
Their $E_{2}$-terms are 
\begin{equation*}
E_{2}^{p,q}=\mathcal{H}^{p}(\mathbb{R}P^{\infty };\mathcal{H}%
^{q}(F(V))),\qquad \widetilde{E}_{2}^{p,q}=\mathcal{H}^{p}(\mathbb{R}%
P^{\infty };\mathcal{H}^{q}(\mathbb{S}M)).
\end{equation*}%
Our assumptions on $\mathcal{H}^{\ast }(M)$ together with (\ref{HFV}) imply
that $\mathcal{H}^{q}(F(V))=0$ if $0<q<m$. Therefore, 
\begin{align*}
\mathcal{H}^{m}(F(V))& \cong E_{2}^{0,m}=\dots =E_{m+1}^{0,m}\text{,} \\
\mathcal{H}^{m+1}(\mathbb{R}P^{\infty })& \cong E_{2}^{m+1,0}=\dots
=E_{m+1}^{m+1,0}.
\end{align*}%
Since $\pi ^{\ast }(\zeta )$ is a permanent cycle and $\alpha ^{\ast }(%
\overline{\zeta })=\pi ^{\ast }(\zeta )$, we have that 
\begin{equation*}
\alpha ^{\ast }d_{m+1}(\overline{\zeta })=\widetilde{d}_{m+1}(\pi ^{\ast
}(\zeta ))=0.
\end{equation*}%
But $\alpha ^{\ast }$ is the identity on $\mathcal{H}^{m+1}(\mathbb{R}%
P^{\infty })\cong E_{2}^{m+1,0}=\widetilde{E}_{2}^{m+1,0}$. Hence $d_{m+1}(%
\overline{\zeta })=0$ and, therefore, $\overline{\zeta }$ is a permanent
cycle too. Thus, there exists $\widetilde{\zeta }\in \mathcal{H}^{m}(C(V))$
such that $\psi ^{\ast }(\widetilde{\zeta })=\overline{\zeta }.$

Our assumptions on $\mathcal{H}^{\ast }(M),$\ together with (\ref{lh}),
imply that $\mathcal{H}^{m}(\mathbb{P}M)\cong \mathcal{H}^{m}(M)\oplus 
\mathcal{H}^{m}(\mathbb{R}P^{n-1}).$\ Since $\phi ^{\ast }\widehat{\alpha }%
^{\ast }(\widetilde{\zeta })=\phi ^{\ast }\pi ^{\ast }(\zeta )$ we conclude
that $\widehat{\alpha }^{\ast }(\widetilde{\zeta })$ is either $\widehat{\pi 
}^{\ast }(\zeta )$ or $\widehat{\pi }^{\ast }(\zeta )+\widehat{\omega }^{m}.$
In the first case we set $\widehat{\zeta }:=\widetilde{\zeta }$ and in the
second case we set $\widehat{\zeta }:=\widetilde{\zeta }-\widetilde{\omega }%
^{m}$, where $\widetilde{\omega }\in \mathcal{H}^{1}(C(M))$ is the first
Stiefel-Whitney class of the bundle $F(V)\rightarrow C(V).$ Since $\widehat{%
\alpha }^{\ast }(\widetilde{\omega })=\widehat{\omega },$ we conclude that $%
\widehat{\alpha }^{\ast }(\widehat{\zeta })=\widehat{\pi }^{\ast }(\zeta ),$
as claimed.
\end{proof}

\bigskip

\noindent \textbf{Proof of Theorem \ref{thm:cuplength}.}\qquad By (\ref%
{cuplalpha}) it suffices to show that cupl$(\widehat{\alpha })\geq k+n.$ Let 
$\zeta _{1},\ldots ,\zeta _{k}\in \mathcal{H}^{m}(M)$ be such that $\zeta
_{1}\smallsmile \cdot \cdot \cdot \smallsmile \zeta _{k}\neq 0.$ Then (\ref%
{lh}) yields $\widehat{\pi }^{\ast }(\zeta _{1}\smallsmile \cdot \cdot \cdot
\smallsmile \zeta _{k})\smallsmile \widehat{\omega }^{n-1}\neq 0.$ By Lemma %
\ref{lem:pullback}\ there exist $\widehat{\zeta }_{1},\ldots ,\widehat{\zeta 
}_{k}\in \mathcal{H}^{m}(C(V))$ such that $\widehat{\alpha }^{\ast }(%
\widehat{\zeta }_{i})=\widehat{\pi }^{\ast }(\zeta _{i}).$ Therefore,%
\begin{equation*}
\widehat{\alpha }^{\ast }(\widehat{\zeta }_{1}\smallsmile \cdot \cdot \cdot
\smallsmile \widehat{\zeta }_{k}\smallsmile \widetilde{\omega }^{n-1})=%
\widehat{\pi }^{\ast }(\zeta _{1}\smallsmile \cdot \cdot \cdot \smallsmile
\zeta _{k})\smallsmile \widehat{\omega }^{n-1}\neq 0.
\end{equation*}%
It follows that cupl$(\widehat{\alpha })\geq k+n$. \qed\noindent

\end{document}